\DeclareFontFamily{OT1}{pzc}{}
\DeclareFontShape{OT1}{pzc}{m}{it}%
             {<-> s * [1.195] pzcmi7t}{}
\DeclareMathAlphabet{\mathscr}{OT1}{pzc}%
                                 {m}{it}
\newcommand{\Spec}{\operatorname{Spec}}
\newcommand{\isomto}{{\stackrel{\sim}{\;\longrightarrow\;}}}
\newcommand{\isomt}{{\stackrel{{\scriptscriptstyle{\sim}}}{\;\rightarrow\;}}}
\renewcommand{\O}{{\mathcal O}}
\renewcommand{\hom}{\operatorname{Hom}}
\newcommand{\real}{{\mathbb R}}
\newcommand{\cplx}{{\mathbb C}}
\newcommand{\Z}{{\mathbb Z}}
\newcommand{\N}{{\mathbb N}}
\newcommand{\A}{{\mathbb A}}
\newcommand{\aone}{{\mathbb A}^1}
\newcommand{\pone}{{\mathbb P}^1}
\newcommand{\gm}{{{\mathbf G}_{m}}}
\newcommand{\MW}{\mathrm{MW}}
\newcommand{\ho}[1]{\mathcal{H}({#1})}
\newcommand{\hop}[1]{\mathcal{H}_{\bullet}({#1})}
\newcommand{\bpi}{\boldsymbol{\pi}}
\newcommand{\piaone}{{\bpi}^{\aone}}
\newcommand{\Nis}{\operatorname{Nis}}
\newcommand{\Shv}{{\mathscr{Shv}}}
\newcommand{\Sm}{\mathscr{Sm}}
\newcommand{\Spc}{\mathscr{Spc}}
\newcommand{\Ab}{\mathscr{Ab}}
\newcommand{\K}{{{\mathbf K}}}
\newcommand{\hsnis}{\mathcal{H}_s^{\Nis}(k)}
\newcommand{\hspnis}{\mathcal{H}_{s,\bullet}^{\Nis}(k)}
\newcommand{\F}{{\mathcal F}}
\newcounter{intro}
\theoremstyle{plain}
\newtheorem{thm}{Theorem}[section]
\newtheorem{lem}[thm]{Lemma}
\newtheorem{cor}[thm]{Corollary}
\newtheorem{prop}[thm]{Proposition}
\newtheorem*{claim*}{Claim}  %Claim
\newtheorem{question}[thm]{Question}
\newtheorem*{thm*}{Theorem}
\newtheorem*{problem*}{Problem}
\newtheorem{thmintro}{Theorem}
\theoremstyle{definition}
\newtheorem{defn}[thm]{Definition}
\theoremstyle{remark}
\newtheorem{rem}[thm]{Remark}
\newtheorem{remintro}[thmintro]{Remark}
\newtheorem{ex}[thm]{Example}
\numberwithin{equation}{section}
\begin{document}
\pagestyle{fancy}
\renewcommand{\sectionmark}[1]{\markright{\thesection\ #1}}
\fancyhead{}
\fancyhead[LO,R]{\bfseries\footnotesize\thepage}
\fancyhead[LE]{\bfseries\footnotesize\rightmark}
\fancyhead[RO]{\bfseries\footnotesize\rightmark}
\chead[]{}
\cfoot[]{}
\setlength{\headheight}{1cm}

\author{\begin{small}Aravind Asok\thanks{Aravind Asok was partially supported by National Science Foundation Awards DMS-0900813 and DMS-0966589.}\end{small} \\ \begin{footnotesize}Department of Mathematics\end{footnotesize} \\ \begin{footnotesize}University of Southern California\end{footnotesize} \\ \begin{footnotesize}Los Angeles, CA 90089-2532 \end{footnotesize} \\ \begin{footnotesize}\url{asok@usc.edu}\end{footnotesize} \and \begin{small}Jean Fasel\thanks{Jean Fasel was supported by the Swiss National Science Foundation, grant PAOOP2\_129089}\end{small} \\ \begin{footnotesize}Fakult\"at Mathematik\end{footnotesize} \\ \begin{footnotesize} Universit\"at Duisburg-Essen, Campus Essen \end{footnotesize} \\ \begin{footnotesize}Thea-Leymann Strasse 9, D-45127 Essen\end{footnotesize} \\ \begin{footnotesize}\url{jean.fasel@gmail.com}\end{footnotesize}}

\title{{\bf Algebraic vector bundles on spheres}}
\date{}
\maketitle

% Revision information
% 18 Apr 2012 - Stripped away all extraneous comments
% 19 Apr 2012 - A few references added and more typos corrected.  A few details added in the proof of Theorem 3.12

\begin{abstract}
We determine the first non-stable ${\mathbb A}^1$-homotopy sheaf of $SL_n$.  Using techniques of obstruction theory involving the ${\mathbb A}^1$-Postnikov tower, supported by some ideas from the theory of unimodular rows, we classify vector bundles of rank $\geq d-1$ on split smooth affine quadrics of dimension $2d-1$.  These computations allow us to answer a question posed by Nori, which gives a criterion for completability of certain unimodular rows.  Furthermore, we study compatibility of our computations of ${\mathbb A}^1$-homotopy sheaves with real and complex realization.
\end{abstract}

\begin{footnotesize}
\tableofcontents
\end{footnotesize}

\section{Introduction}
%In \cite{Asok12}, we began a study of the first non-stable $\aone$-homotopy sheaf of the special linear group.  In that paper, the computations of $\aone$-homotopy sheaves were used in conjunction with techniques of obstruction theory to give a cohomological classification of vector bundles on smooth affine threefolds (over algebraically closed fields having characteristic unequal to $2$).  This paper, which is a continuation of some of the themes of \cite{Asok12}, answers some questions that were implicitly raised before.

This paper, which is part of a collection involving \cite{Asok12} and \cite{Asok12c} is devoted to using the techniques of the Morel-Voevodsky $\aone$-homotopy theory \cite{MV}, in conjunction with ideas from the obstruction theory in topology, to understand algebraic vector bundles on smooth affine schemes.  That this can be done, relies on F. Morel's proof of an algebro-geometric analog of the classical Steenrod representability theorem \cite{Steenrod} for topological vector bundles on spaces having the homotopy type of a CW complex.  More precisely, Morel identifies the set of isomorphism classes of vector bundles of a fixed rank on a smooth {\em affine} scheme $X$ as the set of morphisms in the $\aone$-homotopy category from $X$ to an algebro-geometric model of an appropriate infinite Grassmannian \cite{MField}.

%ideas identifying the set of isomorphism classes of vector bundles of a given rank on a smooth {\em affine} scheme in terms of $\aone$-homotopy classes of maps from $X$ to the infinite Grassmannian, i.e., an algebraic version of the classical Steenrod representability result for topological vector bundles \cite{MField}.

Henceforth, fix a field $k$ that is assumed to be perfect and to have characteristic unequal to $2$.  The former requirement is necessitated by a number of foundational results quoted from \cite{MField}, while the latter assumption is imposed by our appeal to the machinery of Grothendieck-Witt theory.  At several points, it will also be necessary to assume that $k$ is infinite (e.g., this is necessary in Section \ref{s:unimodular}), but the reader may want to assume this from the beginning since in \cite[Lemma 1.15]{MField} Morel appeals to a form of Gabber's presentation lemma that was personally communicated to him by Gabber; the published version of this result requires $k$ to be infinite.

The ``spheres" we refer to in the title are the smooth quadric hypersurfaces $Q_{2n-1}$ in ${\mathbb A}^{2n}$ defined, for any integer $n \geq 1$ by the equation $\sum_{i=1}^n x_i y_i = 1$.  These varieties are spheres in the sense of $\aone$-homotopy theory.  Projecting onto $x_1,\ldots,x_n$, the quadric $Q_{2n-1}$ admits a morphism to ${\mathbb A}^n \setminus 0$ that is Zariski locally trivial and has affine space fibers; as a consequence the projection morphism is an isomorphism in the Morel-Voevodsky $\aone$-homotopy category $\ho{k}$ \cite{MV}.  This isomorphism can be used to identify the $\aone$-homotopy type of $Q_{2n-1}$ as $\Sigma^{n-1}_s \mathbf{G}_m^{\wedge n}$, i.e., $Q_{2n-1}$ is a smooth affine model of an $\aone$-homotopy sphere (see, e.g., \cite[\S 3 Example 2.20]{MV}).  These quadrics are important because they have the simplest non-trivial $\aone$-homotopy type, and also because of their connection with the classical theory of unimodular rows, for which they provide ``universal" examples \cite{Raynaud}.
%and $Q_{2n}$ in ${\mathbb A}^{2n+1}$ defined by $\sum_{i=1}^n x_i y_i = z(z+1)$.  The former is a sphere in the sense of $\aone$-homotopy.  Projecting onto $x_1,\ldots,x_n$, the quadric $Q_{2n-1}$ admits a morphism to ${\mathbb A}^n \setminus 0$ that is Zariski locally trivial and has affine space fibers; as a consequence this morphism is an isomorphism in the Morel-Voevodsky $\aone$-homotopy category $\ho{k}$ \cite{MV}.  This isomorphism can be used to identify the $\aone$-homotopy type of $Q_{2n-1}$ as $\Sigma^{n-1}_s \mathbf{G}_m^{\wedge n}$, i.e., $Q_{2n-1}$ is a smooth affine model of an $\aone$-homotopy sphere (see, e.g., \cite[\S 3 Example 2.20]{MV}).  When $n = 1$ or $2$, the quadric $Q_{2n}$ is known to be $\aone$-homotopy equivalent to ${\pone}$ or ${\pone}^{\wedge 2}$ while it is conjecturally $\aone$-weakly equivalent to ${\pone}^{\wedge n}$  for arbitrary $n$. We prove here that this is true after a single (simplicial) suspension, i.e. $\Sigma_s^1Q_{2n}\simeq \Sigma_s^1(\pone)^{\wedge n}$.  These varieties are important because they have the simplest non-trivial $\aone$-homotopy type, and also because of their connection to the classical theory of unimodular rows, for which they provide ``universal" examples \cite{Raynaud}.}

One goal of this paper is to understand algebraic vector bundles on ``spheres" in the above sense.  For any integer $r \geq 1$, one can form classifying spaces $BGL_r$ (resp. $BSL_r$) of the algebraic groups $GL_r$ (resp. $SL_r$) \cite[\S 4]{MV}.  If $X$ is a smooth affine scheme, F. Morel \cite[Theorem 1.29]{MField} identifies the set of {\em free} $\aone$-homotopy classes of maps $[X,BGL_r]_{\aone}$ with the set ${\mathcal V}_r(X)$ of isomorphism classes of rank $r$ vector bundles on $X$; similarly, $[X,BSL_r]_{\aone}$ corresponds to isomorphism classes of rank $r$ vector bundles equipped with a fixed trivialization of the determinant (we call these ``oriented vector bundles"; see Theorem \ref{thm:orientedvectorbundles} for details).\footnote{Morel states this result only for $r \neq 2$, but as he remarks later, recent work of L.-F. Moser \cite{Moser} allows one to treat the case $r = 2$ as well.  Nevertheless, this fact is not essential to anything in this paper.}  For a variety with trivial Picard group, any vector bundle can be oriented.  On such a variety, the map from the set of isomorphism classes of oriented vector bundles to the set of isomorphism classes of vector bundles given by ``forgetting orientations" is surjective. In general, however, the map ``forgetting orientations" is neither injective nor surjective; an illustration of this fact is given in Section \ref{s:unimodular}.

Now, F. Morel has shown that $BSL_r$ is simply connected from the standpoint of $\aone$-homotopy theory \cite[Theorem 7.20]{MField}.  Just as in classical algebraic topology, the natural map from {\em pointed} $\aone$-homotopy classes of maps with target $BSL_r$ to {\em free} $\aone$-homotopy classes given by ``forgetting the base-point" is a bijection.  Write ${\mathscr V}^o_r(X)$ for the set of isomorphism classes of oriented rank $r$ vector bundles on a smooth scheme $X$, we have constructed an identification
\[
{\mathscr V}^o_r(Q_{2n-1}) \isomto [S^{n-1}_s \wedge \gm^{\wedge n},(BSL_r,\ast)]_{\aone},
\]
where the object on the right hand side denotes pointed $\aone$-homotopy classes of maps.  Note also that, by definition, the object on the right hand side of the above isomorphism is the set of sections over $\Spec k$ of the $\aone$-homotopy sheaf $\bpi_{n-1,n}^{\aone}(BSL_r)$.  Thus, we have reduced the problem of describing oriented vector bundles on the aforementioned motivic spheres to the (necessarily difficult) task of computing $\aone$-homotopy groups of $BSL_r$.

%For any $r\geq 1$, let $BGL_r$ be (an $\aone$-fibrant model of) the classifying space $BGL_r$ introduced in \cite[\S 4]{Morel99}. Then the set of isomorphism classes of rank $r$ vector bundles on $Q_{2n-1}$ can be identified as the set of (unpointed) $\aone$-homotopy classes of maps $[Q_{2n-1},BGL_{r}]_{\aone}$.  For $n \geq 2$, the space $Q_{2n-1}$ has trivial Picard group, and therefore to classify all vector bundles of rank $r$, it suffices to classify vector bundles equipped with a fixed trivialization of the determinant; the latter set can be canonically identified with the set $[Q_{2n-1},BSL_r]_{\aone}$ of (unpointed) $\aone$-homotopy classes of maps from to the classifying space $BSL_r$ (see Section \ref{s:unimodular} for more details).  This identification allows us to exploit a homotopy theoretic fact: $BSL_r$ is simply connected from the standpoint of $\aone$-homotopy theory.  Indeed, just as in classical topology, this observation allows us to conclude that the canonical map from pointed to unpointed $\aone$-homotopy classes of maps is a bijection.

%Write ${\mathscr V}^o_r(X)$ for the set of isomorphism classes of oriented rank $r$ vector bundles on a smooth affine variety $X$.  Using the observations above, then for any integers $n,r \geq 2$ we will see that there are canonical isomorphisms
%\[
%{\mathscr V}_r(Q_{2n-1}) \cong [\Sigma^{n-1}_s \mathbf{G}_m^{\wedge n},BSL_r]_{\aone}.
%\]
%Moreover, the (abelian) group $[\Sigma^{n-1}_s \mathbf{G}_m^{\wedge n},BSL_r]_{\aone}$ is the set of sections over $k$ of the $\aone$-homotopy sheaf $\bpi_{n-1,n}^{\aone}(BSL_r)$.

Results of F. Morel identify the sheaf $\bpi_{n-1,n}^{\aone}(BSL_r)$ as the $n$-fold ``contraction" of the sheaf $\bpi_{n-1}^{\aone}(BSL_r)$ (see Section \ref{s:preliminaries} for discussion about contractions). When $n < r$, the sheaf $\bpi_{n}^{\aone}(BSL_r)$ is already ``in the stable range" in the sense that it coincides with the sheaf $\K^Q_{n}$, i.e., the sheafification for the Nisnevich topology on smooth varieties of the Quillen K-theory presheaf. In Section \ref{section:metastableSL2n}, we introduce a morphism of sheaves $\psi_n:\K^Q_n\to \K_n^M$ for any $n\geq 2$ where the right-hand term is the unramified Milnor K-theory sheaf; this morphism is the sheafification of a homomorphism between Quillen K-theory groups and Milnor K-theory groups introduced originally by Suslin in \cite{Suslin82b}. The cokernel $\mathbf{S}_n$ of $\psi_n$ is the crucial ingredient we need to describe the first non-stable $\aone$-homotopy sheaf of $BSL_r$, and one of our main tasks is to elucidate its structure.  To this end, we show that there exists an epimorphism of sheaves $\K^M_{n}/(n-1)! \to {\mathbf S}_n$ for any $n\geq 2$ and a morphism of sheaves ${\mathbf S}_n\to \K_{n}^M/2$ for any odd integer $n\geq 3$. These results allow us to define the sheaf $\mathbf{T}_n$ for any odd integer $n\geq 3$ as the fiber product
\[
\xymatrix{{\mathbf T}_{n}\ar[r]\ar[d] & {\bf I}^{n}\ar[d] \\
{\mathbf S}_{n}\ar[r] & \K_{n}^M/2;}
\]
here ${\mathbf I}^{n}$ is the unramified sheaf corresponding to the $n$-th power of the fundamental ideal in the Witt ring.   With this additional notation, our computation of the first non-stable $\aone$-homotopy sheaf of $BSL_r$ can be summarized as follows.

\begin{thmintro}[See Theorem \ref{thm:homotopysheafevencase}]
\label{thmintro:mainhomotopysheaf}
If $k$ is a perfect field (having characteristic unequal to $2$), for any integer $m \geq 1$, there are short exact sequences of strictly $\aone$-invariant sheaves of the form
\[
\begin{split}
0 \longrightarrow {\mathbf T}_{2m+1} \longrightarrow &\bpi_{2m}^{\aone}(BSL_{2m}) \longrightarrow \K^Q_{2m} \longrightarrow 0, \\
0 \longrightarrow {\mathbf S}_{2m+2} \longrightarrow &\bpi_{2m+1}^{\aone}(BSL_{2m+1}) \longrightarrow \K^Q_{2m+1} \longrightarrow 0.
\end{split}
\]
\end{thmintro}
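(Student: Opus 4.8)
The plan is to read off both sequences from the $\aone$-fibre sequence obtained by stabilising $SL_r$ inside $SL_{r+1}$. Since $SL_r$ is the stabiliser of the last basis vector for the standard action of $SL_{r+1}$ on $\A^{r+1}$, one has $SL_{r+1}/SL_r \cong \A^{r+1}\setminus 0$, and this exhibits $\A^{r+1}\setminus 0$ as the homotopy fibre of $BSL_r \to BSL_{r+1}$, giving a fibre sequence
\[
\A^{r+1}\setminus 0 \longrightarrow BSL_r \longrightarrow BSL_{r+1},
\]
where $r = 2m$ in the first case and $r = 2m+1$ in the second. Under the identification $\bpi_{r+1}^{\aone}(BSL_{r+1}) = \bpi_r^{\aone}(SL_{r+1})$, the connecting homomorphism of this sequence is induced by the last-column projection $p\colon SL_{r+1} \to \A^{r+1}\setminus 0$; its point is that it records a unimodular row of length $r+1$, and this interpretation is what ultimately links the computation to the theory of unimodular rows.

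Next I would invoke three results of Morel. First, $\A^{r+1}\setminus 0 \simeq S^{r}_s \wedge \gm^{\wedge(r+1)}$ is $(r-1)$-connected and $\bpi_r^{\aone}(\A^{r+1}\setminus 0) \cong \K^{MW}_{r+1}$. Second, because $r < r+1$ the sheaf $\bpi_r^{\aone}(BSL_{r+1})$ lies in the stable range and equals $\K^Q_r$. Third, all the homotopy sheaves in play are strictly $\aone$-invariant, so the long exact sequence is one of such sheaves. Extracting the relevant segment of the long exact sequence of the fibre sequence above, and using the vanishing of $\bpi_{r-1}^{\aone}(\A^{r+1}\setminus 0)$, yields the exact sequence
\[
\bpi_{r+1}^{\aone}(BSL_{r+1}) \stackrel{p_*}{\longrightarrow} \K^{MW}_{r+1} \longrightarrow \bpi_r^{\aone}(BSL_r) \longrightarrow \K^Q_r \longrightarrow 0.
\]
Hence the stabilisation map $\bpi_r^{\aone}(BSL_r) \to \K^Q_r$ is surjective, with kernel canonically $\coker(p_*) = \K^{MW}_{r+1}/\operatorname{im}(p_*)$. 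Both sequences of the theorem then follow once we identify $\coker(p_*)$ with $\mathbf{S}_{r+1}$ when $r+1$ is even and with $\mathbf{T}_{r+1}$ when $r+1$ is odd.

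To identify $\coker(p_*)$ I would analyse $\operatorname{im}(p_*)$, namely the subsheaf of $\K^{MW}_{r+1}$ of classes that lift along $p$ to $SL_{r+1}$ — the completable rows; the advantage is that this image can be pinned down directly, without computing $\bpi_{r+1}^{\aone}(BSL_{r+1})$ in full. Composing $p_*$ with Morel's projection $\K^{MW}_{r+1} \to \K^M_{r+1}$ and comparing with the stabilisation $\bpi_{r+1}^{\aone}(BSL_{r+1}) \to \K^Q_{r+1}$ (surjective in this first non-stable degree) followed by the Suslin homomorphism $\psi_{r+1}\colon \K^Q_{r+1} \to \K^M_{r+1}$ identifies the image of $\operatorname{im}(p_*)$ in $\K^M_{r+1}$ with $\operatorname{im}(\psi_{r+1})$, so the $\K^M$-shadow of the cokernel is $\mathbf{S}_{r+1} = \coker(\psi_{r+1})$. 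The behaviour of $p_*$ in the quadratic direction $\mathbf{I}^{r+1}$, however, depends on the parity of $r$: when $r+1$ is even this Witt-theoretic part contributes nothing new and $\coker(p_*) \cong \mathbf{S}_{r+1}$, whereas when $r+1$ is odd the $\mathbf{I}^{r+1}$-component survives in the cokernel and, assembled with the $\K^M$-side over $\K^M_{r+1}/2$ through the morphism $\mathbf{S}_{r+1} \to \K^M_{r+1}/2$, reconstitutes exactly the fibre product $\mathbf{T}_{r+1} = \mathbf{S}_{r+1}\times_{\K^M_{r+1}/2}\mathbf{I}^{r+1}$.

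I expect the main obstacle to be the precise determination of $\operatorname{im}(p_*)$, that is, of which classes in $\K^{MW}_{r+1}$ are completable. This is exactly where the structural input developed earlier must be used: Suslin's results on unimodular rows and the factorial governing the epimorphism $\K^M_{n}/(n-1)! \to \mathbf{S}_{n}$ (with $n = r+1$), together with the parity-dependent analysis of the quadratic invariant. The even-rank case is the genuinely harder one, since there the $\mathbf{I}^{r+1}$-component of the image does not reduce to its $\K^M$-shadow and is responsible for the richer sheaf $\mathbf{T}_{r+1}$; verifying that the separately computed $\K^M$- and $\mathbf{I}^{r+1}$-parts glue compatibly into the asserted fibre product is the most delicate step.
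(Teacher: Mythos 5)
Your framework is exactly the paper's: the fibre sequence $\A^{r+1}\setminus 0 \to BSL_r \to BSL_{r+1}$, the exact segment
\[
\bpi_{r+1}^{\aone}(BSL_{r+1}) \xrightarrow{\;p_*\;} \K^{MW}_{r+1} \longrightarrow \bpi_r^{\aone}(BSL_r) \longrightarrow \K^Q_r \longrightarrow 0,
\]
and the reduction of the theorem to computing $\coker(p_*)$. Up to that point the proposal is correct. The genuine gap is that your identification of $\coker(p_*)$ consists of assertions rather than arguments: the claims that for $r+1$ even ``the Witt-theoretic part contributes nothing new'' while for $r+1$ odd ``the $\mathbf{I}^{r+1}$-component survives'' and reassembles into the fibre product $\mathbf{T}_{r+1}$ are not consequences of anything you have established --- they \emph{are} the theorem, and you supply no mechanism for proving them. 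Even your more innocuous-looking claim, that the image of $\operatorname{im}(p_*)$ in $\K^M_{r+1}$ equals $\operatorname{im}(\psi_{r+1})$, presupposes that the composite $\K^{MW}_{r+1}\to\K^M_{r+1}$ applied to $p_*$ factors through the stabilization map to $\K^Q_{r+1}$; this factorization is precisely how the paper \emph{defines} $\psi_{r+1}$, and its existence already requires the key computation you are missing.

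That missing computation is the Euler class lemma (Lemma \ref{lem:eulerclasscomputation}). Precomposing $p_*$ with the connecting map $\delta_{r+1}\colon \K^{MW}_{r+2}\to \bpi_{r+1}^{\aone}(BSL_{r+1})$ of the next fibre sequence (whose cokernel is the stabilization $\K^Q_{r+1}$), one gets a morphism $\K^{MW}_{r+2}\to\K^{MW}_{r+1}$ which the paper identifies, via Lemma \ref{lem:universaltorsor} and obstruction theory, with the Euler class of the universal stably free rank-$(r+1)$ bundle on $SL_{r+2}/SL_{r+1}\simeq Q_{2r+3}$; this class equals $\eta$ when $r+1$ is even (a nontrivial computation imported from \cite{Fasel11c}) and $0$ when $r+1$ is odd (a unimodular row of even length splits off a free summand). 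Since $\operatorname{im}(\eta)={\mathbf I}^{r+2}=\ker(\K^{MW}_{r+1}\to\K^M_{r+1})$, the value $\eta$ is exactly what forces $\coker(p_*)\cong\mathbf{S}_{r+1}$ in the even case. In the odd case the vanishing gives a factorization $\psi'_{r+1}\colon\K^Q_{r+1}\to\K^{MW}_{r+1}$ of $p_*$, but that alone does not yield the fibre product: one still needs Lemma \ref{lem:image}, a diagram chase (which itself invokes the identity $q\circ\delta=\eta$ one degree lower) showing that the composite $\K^Q_{r+1}\to\K^{MW}_{r+1}\to{\mathbf I}^{r+1}$ vanishes, so that $\operatorname{im}(\psi'_{r+1})\subseteq 2\K^M_{r+1}$ and the cokernel is $\mathbf{S}_{r+1}\times_{\K^M_{r+1}/2}{\mathbf I}^{r+1}=\mathbf{T}_{r+1}$. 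Two further misjudgments in your closing paragraph: the inputs you propose to use (the epimorphism $\K^M_n/(n-1)!\to\mathbf{S}_n$ and Suslin's completability results) are needed for the vector bundle classification, not for this theorem; and the difficulty is distributed opposite to your guess --- the hard computation (Euler class $=\eta$, not $0$) sits in the even case producing $\mathbf{S}_{r+1}$, not in the odd case producing $\mathbf{T}_{r+1}$.
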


%\begin{remintro}
%Note that the sheaf ${\mathbf S}_{n}$ admits an epimorphism from $\K^M_n/(n-1)!$; this is proved in Section \ref{section:metastableSL2n}.  The question of whether this epimorphism is an isomorphism is, as was discussed in \cite[Remark 5]{Asok12}, equivalent to a question posed by Suslin. \jean{As you point out below, the homomorphism considered by Suslin is the one we worked with in \cite{Asok12}. In \cite[\S 4, first paragraph]{Suslin}, Suslin observes that the homomorphisms should coincide, but that he doesn't know any proof. If the proof of \cite{Asok12} is correct, this would answer Suslin's question. }
%\end{remintro}

Oriented algebraic vector bundles of rank $\geq n$ on $Q_{2n-1}$ are easy to describe, essentially because the classification problem for such bundles already lies in the ``stable range."  Thus, we turn our attention to oriented vector bundles of rank $n-1$, which we refer to as the ``critical" rank.  While Theorem \ref{thmintro:mainhomotopysheaf} does not immediately provide enough information to completely describe the set of isomorphism classes of oriented rank $n-1$ vector bundles on $Q_{2n-1}$, it does reduce the problem to understanding contractions of ${\mathbf S}_n$.  Moreover, upon $n$-fold contraction, the problem of providing an explicit description of ${\mathbf S}_n$ becomes in a sense geometric and, with some input from the theory of unimodular rows, we can then give a rather explicit classification of oriented rank $(n-1)$-vector bundles on $Q_{2n-1}$. %The next result can thus be viewed as a tiny piece of evidence that Suslin's question admits a positive answer; the dichotomy between the odd and even cases persists through all our results.

\begin{thmintro}[See Theorems \ref{thm:bundleseven} and \ref{thm:bundlesodd}]
\label{thmintro:vectorbundlesonoddspheres}
If $k$ is an infinite perfect field having characteristic unequal to $2$, $n$ is an integer $\geq 1$, and $W(k)$ denotes the Witt group of $k$, there are canonical isomorphisms
\[
{\mathscr V}_{n-1}^o(Q_{2n-1}) \isomto \begin{cases}
\Z/(n-1)! & \text{ if } n = 2m \\
\Z/(n-1)! \times_{\Z/2} W(k) & \text{ if } n = 2m+1;
\end{cases}
\]
where the maps in the fiber product are the reduction modulo $2$ map and the rank homomorphism $W(k) \to \Z/2$.
\end{thmintro}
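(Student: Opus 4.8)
The plan is to reduce the statement to a computation of $n$-fold contractions of $\aone$-homotopy sheaves. By the identifications recalled in the introduction, ${\mathscr V}_{n-1}^o(Q_{2n-1})$ is the group of sections over $\Spec k$ of $\bpi_{n-1,n}^{\aone}(BSL_{n-1})$, which by Morel's contraction formula is $(\bpi_{n-1}^{\aone}(BSL_{n-1}))_{-n}(\Spec k)$. Since this is precisely the first non-stable sheaf $\bpi_r^{\aone}(BSL_r)$ for $r = n-1$, I would feed it into Theorem \ref{thmintro:mainhomotopysheaf}: for $n$ even (so $r = n-1$ is odd) one gets a short exact sequence with subsheaf $\mathbf{S}_n$ and quotient $\K^Q_{n-1}$, whereas for $n$ odd (so $r$ is even) the subsheaf is $\mathbf{T}_n$ and the quotient is again $\K^Q_{n-1}$.

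Next I would contract $n$ times. The contraction functor $(-)_{-1}$ is exact on strictly $\aone$-invariant sheaves, so iterating it preserves these sequences, and the essential observation is that the $K$-theory quotient dies: iterating $(\K^Q_j)_{-1} \cong \K^Q_{j-1}$ gives $(\K^Q_{n-1})_{-n} \cong \K^Q_{-1} = 0$, since one passes through the constant sheaf $\K^Q_0 = \Z$, whose contraction vanishes. Consequently $(\bpi_{n-1}^{\aone}(BSL_{n-1}))_{-n}$ is $(\mathbf{S}_n)_{-n}$ when $n$ is even and $(\mathbf{T}_n)_{-n}$ when $n$ is odd. Finally, because a field has no higher Nisnevich cohomology with coefficients in a strictly $\aone$-invariant sheaf, evaluation at $\Spec k$ is exact and commutes with the fiber products appearing below.

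The crux is the computation $(\mathbf{S}_n)_{-n} \cong \Z/(n-1)!$. Writing $\mathbf{S}_n = \coker(\psi_n\colon \K^Q_n \to \K^M_n)$ and using exactness of contraction, I obtain $(\mathbf{S}_n)_{-n} \cong \coker\bigl((\psi_n)_{-n}\colon \K^Q_0 \to \K^M_0\bigr)$, and both contracted sheaves are the constant sheaf $\Z$. Thus everything hinges on showing that the induced endomorphism $(\psi_n)_{-n}$ of $\Z$ is multiplication by $\pm(n-1)!$. The epimorphism $\K^M_n/(n-1)! \to \mathbf{S}_n$ recalled in the introduction already yields, after contraction, a surjection $\Z/(n-1)! \twoheadrightarrow (\mathbf{S}_n)_{-n}$, so the real content is the matching lower bound, namely that no further collapse occurs. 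This is exactly where Suslin's analysis of $\psi_n$, and geometrically the theory of unimodular rows and Mennicke symbols, must be brought to bear to pin the divisibility down to the precise factorial $(n-1)!$ rather than merely a divisor of it; I expect this to be the main obstacle.

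Granting $(\mathbf{S}_n)_{-n} \cong \Z/(n-1)!$, the even case is complete, giving ${\mathscr V}_{n-1}^o(Q_{2n-1}) \cong \Z/(n-1)!$. For $n$ odd I would contract the defining fiber square of $\mathbf{T}_n$. Contraction preserves the fiber product, and with $(\K^M_n/2)_{-n} \cong \Z/2$ and $(\mathbf{I}^n)_{-n} \cong \mathbf{I}^0 = \mathbf{W}$ (the $n$-fold contraction of the $n$-th power of the fundamental ideal being the Witt sheaf $\mathbf{W}$), I get $(\mathbf{T}_n)_{-n} \cong \Z/(n-1)! \times_{\Z/2} \mathbf{W}$, where the two maps are reduction modulo $2$ and the mod-$2$ rank $\mathbf{W} \to \Z/2$. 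Evaluating at $\Spec k$ and using that sections commute with fiber products produces $\Z/(n-1)! \times_{\Z/2} W(k)$, as claimed.
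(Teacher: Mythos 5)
Your skeleton coincides step for step with the paper's proofs of Theorems \ref{thm:bundleseven} and \ref{thm:bundlesodd}: the identification ${\mathscr V}^o_{n-1}(Q_{2n-1}) \cong \bpi_{n-1}^{\aone}(BSL_{n-1})_{-n}(k)$ (Lemma \ref{lem:orientedvectorbundles} together with Theorem \ref{thm:contractions}), the case division via Theorem \ref{thm:homotopysheafevencase} in its contracted form (Proposition \ref{prop:tatehomotopysheavesofgln}), the vanishing $(\K^Q_{n-1})_{-n} = 0$, and, for odd $n$, the contraction of the fiber square defining $\mathbf{T}_n$ using $({\mathbf I}^n)_{-n} \cong \mathbf{W}$ from Proposition \ref{prop:contractionsofMW}.

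However, the gap you flag is genuine and is not a side issue: you never establish that the surjection $\Z/(n-1)! \twoheadrightarrow (\mathbf{S}_n)_{-n}(k)$ is injective, i.e., that $(\psi_n)_{-n}$ really is multiplication by $\pm(n-1)!$ and not by a proper divisor. Since both cases of the theorem reduce to exactly this point (the even case directly, the odd case through the lower-left corner of the contracted fiber square for $\mathbf{T}_n$), your argument as written only shows that the groups in question are \emph{quotients} of the stated answers. The paper closes this in Lemma \ref{lem:commutative} and Corollary \ref{cor:estimate}, and your instinct about the tool is correct --- it is Suslin's unimodular row theory \cite[\S 5]{SuslinStablyFree} --- but the actual mechanism is worth recording. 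Lemma \ref{lem:commutative} shows, via Suslin matrices, that the map $\phi_n: Q_{2n-1} \to GL_{2^{n-1}}$ lifts up to $\aone$-homotopy to $\phi'_n: Q_{2n-1} \to GL_n$, and that the composite of $\phi'_n$ with the first-row projection $GL_n \to \A^n \setminus 0$ sends $(a,b)$ to $(a_1,a_2,a_3^2,\ldots,a_n^{n-1})$; on Milnor symbols this is multiplication by $(n-1)!$, which yields the commuting triangle $\psi_n \circ \mu_n = (n-1)!$. Corollary \ref{cor:estimate} then contracts this triangle $m \geq n-2$ times: by Lemma \ref{lem:sheafifiedmu} one has $(\mu_n)_{-m} = \mu_{n-m}$, and $\mu_{n-m}$ is an \emph{isomorphism} in this range (Matsumoto's theorem when $n-m = 2$), so $(\psi_n)_{-m}$ agrees with multiplication by $(n-1)!$ up to an isomorphism and its cokernel is exactly $\K^M_{n-m}/(n-1)!$. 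Note also a subtlety your outline elides: $(\psi_n)_{-m} \neq \psi_{n-m}$ in general (see the remark following Corollary \ref{cor:estimate}), so one must genuinely contract the triangle rather than argue with a comparison map in lower weight; naming Suslin's theory is not a substitute for this computation, which is the mathematical heart of the theorem.
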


\begin{remintro}
Using this computation, we can determine the set of isomorphism classes of rank $(n-1)$-vector bundles on $Q_{2n-1}$: see Corollaries \ref{cor:nonorientedbundleseven} and \ref{cor:nonorientedbundlesodd} for precise statements.  Indeed, we will see that when $n = 2m$, vector bundles admit unique orientations, while when $n = 2m+1$, isomorphic vector bundles can admit different orientations.
\end{remintro}

%\jean{We deduce from this computation the set of isomorphism classes of rank $(n-1)$-vector bundles on $Q_{2n-1}$.
%\begin{thmintro}[See Corollaries \ref{cor:nonorientedbundleseven} and \ref{cor:nonorientedbundlesodd}]
%\label{thmintro:nonorientedvectorbundlesonoddspheres}
%If $n$ is an integer $\geq 1$, and $W(k)$ denotes the Witt group of $k$, there are canonical isomorphisms
%\[
%{\mathscr V}_{n-1}(Q_{2n-1}) \isomto \begin{cases}
%\Z/(n-1)! & \text{ if } n = 2m \\
%\Z/(n-1)! \times_{\Z/2} W(k)/k^\times & \text{ if } n = 2m+1;
%\end{cases}
%\]
%where the maps in the fiber product are the reduction modulo $2$ map and the map $W(k)/k^\times\to \Z/2$ induced by the rank homomorphism $W(k) \to \Z/2$.
%\end{thmintro}}

In turn, our classification results can be applied to obtain new results about unimodular rows.  In \cite{SuslinStablyFree}, Suslin gave a condition that was sufficient to insure that a unimodular row (see Section \ref{s:unimodular} for some recollections about unimodular rows) over any ring $R$ can be completed to an invertible matrix over $R$.  In \cite{MohanKumarUnimodular}, Nori inquired about a possible generalization of Suslin's theorem. This generalization turned out to be essentially correct when the base field is algebraically closed \cite{MohanKumarUnimodular}.  In \cite{Fasel11c}, the second author constructed a counterexample to Nori's original question and proposed a refined version.  The computations of Theorem \ref{thmintro:vectorbundlesonoddspheres} can be used to answer this refined version of Nori's question.

\begin{thmintro}[See Theorems \ref{thm:Nori} and \ref{thm:Nori+}]
\label{thmintro:Nori}
Suppose $k$ is a field of characteristic different from $2$, $R=k[x_1,\ldots,x_n]$ is a polynomial ring in $n$ variables, $\phi:R\to A$ is a $k$-algebra homomorphism such that $\sum \phi(x_i)A=A$, and $f_1,\ldots,f_n$ are elements of $R$ such that the reduced subscheme of ${\mathbb A}^n$ defined by the ideal $I(f_1,\ldots,f_n)$ coincides with $0 \in {\mathbb A}^n$; write ${\mathbf f}: {\mathbb A}^n \setminus 0 \to {\mathbb A}^n \setminus 0$ for the morphism induced by $(f_1,\ldots,f_n)$.  Assume that $length(R/(f_1,\ldots,f_n))$ is divisible by $(n-1)!$.
\begin{itemize}
\item If $n$ is odd, then $(\phi(f_1),\ldots,\phi(f_n))$ is completable.
\item If $n$ is even, then one can attach an element $deg({\mathbf f}) \in W(k)$ to ${\mathbf f}$, and if $deg({\mathbf f}) = 0$, then $(\phi(f_1),\ldots,\phi(f_n))$ is completable.
\end{itemize}
\end{thmintro}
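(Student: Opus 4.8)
The plan is to reinterpret completability $\aone$-homotopically and then read the answer off the two computations recalled above. Since $\sum\phi(x_i)A = A$, the row $(\phi(x_1),\dots,\phi(x_n))$ is classified by a morphism $\psi\colon\Spec A\to{\mathbb A}^n\setminus 0$, and the hypothesis on the common zero locus of $f_1,\dots,f_n$ guarantees that $(f_1,\dots,f_n)$ defines a self-map ${\mathbf f}\colon{\mathbb A}^n\setminus 0\to{\mathbb A}^n\setminus 0$; by construction $(\phi(f_1),\dots,\phi(f_n))$ is classified by ${\mathbf f}\circ\psi$. Using the $\aone$-weak equivalence ${\mathbb A}^n\setminus 0\simeq Q_{2n-1}$ together with Morel's representability theorem, a length-$n$ unimodular row is completable precisely when its classifying map to ${\mathbb A}^n\setminus 0$ is $\aone$-null, equivalently when it lifts through $SL_n\to{\mathbb A}^n\setminus 0$. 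I would therefore organize the argument around the $\aone$-fibre sequence ${\mathbb A}^n\setminus 0\to BSL_{n-1}\to BSL_n$ (coming from $SL_{n-1}\to SL_n\to SL_n/SL_{n-1}\simeq{\mathbb A}^n\setminus 0$): nullity of ${\mathbf f}\circ\psi$ is a two-stage obstruction, a primary one detecting triviality of the associated stably free module in ${\mathscr V}^o_{n-1}(Q_{2n-1})$ and a secondary one, defined once the module is trivialized, living in a group built from the first non-stable homotopy sheaf of $BSL_n$.

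The second step is to compute the class $[{\mathbf f}\circ\psi]=\psi^*[{\mathbf f}]$ through the motivic Brouwer degree. Morel's theorem identifies $[{\mathbb A}^n\setminus 0,{\mathbb A}^n\setminus 0]_{\aone}$ with $GW(k)$, and I would compute $\deg^{\aone}({\mathbf f})\in GW(k)$ by localizing at the origin: as ${\mathbf f}$ extends to an endomorphism of ${\mathbb A}^n$ whose only zero over $0$ is $0$ itself, $\deg^{\aone}({\mathbf f})$ is the local $\aone$-degree at the origin, represented by a Scheja--Storch symmetric bilinear form on the finite $k$-algebra $R/(f_1,\dots,f_n)$. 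Its rank is $\dim_k R/(f_1,\dots,f_n)=length(R/(f_1,\dots,f_n))$, and its class in the Witt ring $W(k)$ is the invariant denoted $\deg({\mathbf f})$ in the even case. Thus the rank of the degree records the length, while its Witt class records $\deg({\mathbf f})$.

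The third step feeds these data into Theorems \ref{thmintro:mainhomotopysheaf} and \ref{thmintro:vectorbundlesonoddspheres}. The primary obstruction is the class of the underlying stably free module; by Theorem \ref{thmintro:vectorbundlesonoddspheres} its non-orientation part is the $\Z/(n-1)!$-summand, on which $\deg^{\aone}({\mathbf f})$ acts through its rank, so the hypothesis $(n-1)!\mid length(R/(f_1,\dots,f_n))$ makes it vanish in both parities; the additional $W(k)$ appearing in ${\mathscr V}^o_{n-1}(Q_{2n-1})$ for $n$ odd is orientation data, absorbed on passing from oriented triviality to $GL_n$-completability. The secondary obstruction is controlled by the first non-stable homotopy sheaf $\bpi_{n}^{\aone}(BSL_n)$, and here the parity of $n$ is decisive. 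For $n$ odd this sheaf is governed by $\mathbf{S}_{n+1}$, which carries no Witt-theoretic summand (being a quotient of a Milnor $K$-theory sheaf modulo a factorial); after the contraction relevant to $Q_{2n-1}$ the secondary obstruction is again carried by the length and dies, so the row is completable unconditionally. For $n$ even the sheaf is governed by $\mathbf{T}_{n+1}$, whose defining fibre product involves $\mathbf{I}^{n+1}$, and the contraction of this summand produces a genuine $W(k)$-valued secondary obstruction, which I would identify with the Witt class $\deg({\mathbf f})$; its vanishing is exactly the stated hypothesis.

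The step I expect to be the crux is the even case: setting up the secondary obstruction precisely as an element of the contracted homotopy sheaf of $BSL_n$, and proving that it equals the Witt class $\deg({\mathbf f})$ of the local $\aone$-degree. This requires pinning down the connecting map in the fibre sequence above at the level of the $\mathbf{S}_n$ and $\mathbf{T}_n$ sheaves and tracking how $\deg^{\aone}({\mathbf f})$ acts on the universal class through its $\mathbf{I}^{n+1}$-component. Two subsidiary points, which I expect to be routine given the results already in hand, are the comparison identifying the sphere-level Brouwer degree of ${\mathbf f}$ with the local $\aone$-degree at the origin, and the orientation bookkeeping relating oriented-module triviality, $SL_n$-completability, and ordinary $GL_n$-completability.
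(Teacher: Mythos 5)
Your starting point---reduction to the universal quadric $Q_{2n-1}$, interpretation of completability as a lifting problem along $SL_n \to \A^n\setminus 0$, and extraction of the answer from the classification of stably free modules on $Q_{2n-1}$---is exactly the paper's strategy, and your proposal to compute the relevant invariants via the motivic Brouwer degree (rank $=$ length, Witt class $=$ $\deg(\mathbf{f})$) is a reasonable variant of the paper's computation, which instead evaluates $F^*$ on $H^{n-1}(\A^n\setminus 0,\K^M_n)$ through the connecting map to $H^n_{\{0\}}(\A^n,\K^M_n)$ (using Swan's Bertini theorem) and defines $\deg$ via $F^*$ acting on $H^{n-1}(\A^n\setminus 0,{\mathbf I}^n)\cong W(k)$. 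However, your obstruction framework contains a genuine error which flips the parity of the two cases.

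Completability over $Q_{2n-1}$ is a \emph{one-step} problem: by exactness of $[Q_{2n-1},SL_n]_{\aone}\to[Q_{2n-1},\A^n\setminus 0]_{\aone}\to[Q_{2n-1},BSL_{n-1}]_{\aone}$, the row lifts through $SL_n$ if and only if the class of its oriented kernel bundle in $[Q_{2n-1},BSL_{n-1}]_{\aone}\cong\bpi_{n-1}^{\aone}(BSL_{n-1})_{-n}(k)$ vanishes. (Note also that ``$\aone$-null'' and ``lifts through $SL_n$'' are not equivalent, contrary to your first step: Suslin's completable rows have nonzero Brouwer degree.) There is no secondary obstruction, and the sheaf $\bpi_{n}^{\aone}(BSL_n)$, i.e.\ $\mathbf{S}_{n+1}$ or $\mathbf{T}_{n+1}$, never enters; what governs the problem is $\bpi_{n-1}^{\aone}(BSL_{n-1})$, which by Theorem \ref{thm:homotopysheafevencase} is built from $\mathbf{S}_n$ when $n$ is even and from $\mathbf{T}_n$ when $n$ is odd. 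Consequently the Witt-valued obstruction occurs for $n$ \emph{odd}, not $n$ even: this is the content of Theorems \ref{thm:bundleseven} and \ref{thm:bundlesodd}, and it is how the paper proves Theorem \ref{thm:Nori} ($n$ even: obstruction group $\Z/(n-1)!$, killed by the length hypothesis alone) and Theorem \ref{thm:Nori+} ($n$ odd: obstruction group $\Z/(n-1)!\times_{\Z/2}W(k)$, requiring in addition $\deg=0$). Your conclusions have these parities interchanged; they cannot be repaired, since the paper records via \cite[Theorem 4.7]{Fasel11c} that for odd $n$ the length hypothesis alone is insufficient. (Your parities do match the statement as printed in the introduction, but that statement is itself transposed relative to the body of the paper; your mechanism proves neither version.) Finally, your claim that the $W(k)$-factor is ``orientation data, absorbed on passing from oriented triviality to $GL_n$-completability'' is false: by Corollary \ref{cor:nonorientedbundlesodd}, a unit $\alpha$ acts on a pair $(m,\phi)$ by $(m,\langle\alpha\rangle\phi)$, so a nonzero Witt class remains nonzero under every change of orientation and genuinely obstructs completability.
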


Theorem \ref{thmintro:vectorbundlesonoddspheres} also allows us to deduce Theorem \ref{thm:complexrealization}, which discusses compatibility of our homotopy sheaf computations with complex realization.  In a sense, these results ``explain" that the factors of $n!$ that appear in the homotopy sheaves arise from complex Bott periodicity, while the factors of ${\mathbf I}^{n}$ that appear arise because of real Bott periodicity.

\subsubsection*{Acknowledgements}
The first author would like to thank Brent Doran for many discussions about the $\aone$-homotopy theory of quadrics.  Both the authors wish to thank the referees for a very careful reading of the paper; the numerous comments/remarks helped to greatly improve the exposition.

\section{Preliminaries}
\label{s:preliminaries}
In this section, we begin by fixing notation regarding $\aone$-homotopy theory.  General references for this section (and for the remainder of the paper) are the foundational works \cite{MV} and \cite{MField}.  We establish some preliminary results regarding change of basepoints, and contracted homotopy sheaves that will be used in the computations throughout the paper.  We also review some results about Milnor-Witt K-theory, mainly notation and results from \cite{Morel04}, that will appear repeatedly in the remainder of the paper.

\subsubsection*{Notational preliminaries}
Assume $k$ is a field.  Write $\Sm_k$ for the category of schemes that are smooth, separated and have finite type over $\Spec k$ and $\Spc_k := \Delta^{\circ}\Shv_{\Nis}(\Sm_k)$ (resp. $\Spc_{k,\bullet}$) for the category of (pointed) simplicial sheaves on the site of smooth schemes equipped with the Nisnevich topology; objects of $\Spc_k$ (resp. $\Spc_{k,\bullet}$) will be referred to as {\em (pointed) $k$-spaces}, or simply as {\em (pointed) spaces} if $k$ is clear from context.  Write $\hsnis$ (resp. $\hspnis$) for the homotopy category of $\Spc_k$ equipped with the model structure where cofibrations are monomorphisms, weak equivalences are those morphisms that are stalkwise weak equivalences of simplicial sets, and fibrations are those morphisms having the right lifting property with respect to morphisms that are simultaneously cofibrations and weak equivalences; we refer to the category $\hsnis$ (resp. $\hspnis$) as the (pointed) simplicial homotopy category.  Write $\ho{k}$ (resp. $\hop{k}$) for the Morel-Voevodsky $\aone$-homotopy category \cite{MV} which is obtained as a localization, in the sense of Bousfield, of the simplicial homotopy category

Given two (pointed) spaces $\mathscr{X}$ and $\mathscr{Y}$, we set $[{\mathscr X},{\mathscr Y}]_s := \hom_{\hsnis}({\mathscr X},{\mathscr Y})$, and $[\mathscr{X},\mathscr{Y}]_{\aone} := \hom_{\ho{k}}(\mathscr{X},\mathscr{Y})$; morphisms in pointed homotopy categories will be denoted similarly with base-points explicitly specified.  We write $S^i_s$ for the constant sheaf on $\Sm_k$ associated with the simplicial $i$-sphere, and we view $\gm$ as a pointed space, pointed by $1$.  The $\aone$-homotopy sheaves of a pointed space $(\mathscr{X},x)$, denoted $\bpi_i^{\aone}(\mathscr{X},x)$, are defined as the Nisnevich sheaves associated with the presheaves $U \mapsto [S^i_s \wedge U_+,(\mathscr{X},x)]_{\aone}$.  As $\mathscr{X}(k)$ is non-empty, the forgetful map $[U_+,(\mathscr{X},x)]_{\aone} \to [U,\mathscr{X}]_{\aone}$ is a bijection, so we can write $\bpi_0^{\aone}(\mathscr{X})$ independent of the choice of base point $x$. We also write $\bpi_{i,j}^{\aone}({\mathscr X},x)$ for the Nisnevich sheafification of the presheaf $U \mapsto [S^i_s \wedge \mathbf{G}_m^{\wedge j} \wedge U_+,({\mathscr X},x)]_{\aone}$.

A presheaf of sets $\F$ on $\Sm_k$ is called {\em $\aone$-invariant} if for any smooth $k$-scheme $U$ the morphism $\F(U) \to \F(U \times \aone)$ induced by pullback along the projection $U \times \aone \to U$ is a bijection.  A Nisnevich sheaf of groups $\mathscr{G}$ is called {\em strongly $\aone$-invariant} if the cohomology presheaves $H^i_{\Nis}(\cdot,\mathscr{G})$ are $\aone$-invariant for $i = 0,1$.  A Nisnevich sheaf of abelian groups ${\mathbf A}$ is called {\em strictly $\aone$-invariant} if the cohomology presheaves $H^i_{\Nis}(\cdot,{\mathbf A})$ are $\aone$-invariant for every $i \geq 0$. Note that strongly $\aone$-invariant sheaves of abelian groups are strictly $\aone$-invariant by \cite[Corollary 5.45]{MField}. Henceforth, unless otherwise indicated, the word sheaf will mean Nisnevich sheaf on $\Sm_k$, and the undecorated symbol $H^i$ will mean ``$i$-th cohomology (of a sheaf) with respect to the Nisnevich topology."

If $n \geq 0$ is an integer, a space $\mathscr{X}$ is called $\aone$-$n$-connected if $\bpi_0^{\aone}(\mathscr{X}) = *$, and, for any choice of base-point $x \in \mathscr{X}(k)$ and any integer $i \leq n$, $\bpi_i^{\aone}(\mathscr{X},x) = 0$.  If $G$ is an algebraic group and we view $G$ as a pointed space, the base-point is always the identity section $\Spec k \to G$ and for this reason will usually be suppressed.  Likewise, the space $BG$, defined by means of the simplicial bar construction \cite[\S 4]{MV}, has a canonical base-point corresponding to the unique $0$-simplex, and this will usually be suppressed from notation as well.

\subsubsection*{Change of basepoints}
The following lemma is an analog in $\aone$-homotopy theory of a classical result regarding the relationship between pointed and free homotopy classes when the target is simply connected (see, e.g., \cite[Chapter 7.3 Corollary 7 p. 383]{Spanier} for the classical result).

%\aravind{This lemma is not quite correct without some assumption on $X$.  For example if $X$}

\begin{lem}
\label{lem:basepoint}
Let $({\mathscr X},x_0)$ and $({\mathscr Y},y_0)$ be pointed spaces. Suppose that $\mathscr{Y}$ is $\aone$-1-connected. Then the map $f:[(\mathscr{X},x_0),(\mathscr{Y},y_0)]_{\aone}\to [\mathscr{X},\mathscr{Y}]_{\aone}$ obtained by forgetting base-points is a bijection.
\end{lem}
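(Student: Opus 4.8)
The plan is to carry the classical argument (the cited result of Spanier) over to $\hop{k}$ using a cofiber sequence and the associated Puppe sequence of pointed sets. I would begin with the cofiber sequence
\[
(\Spec k)_+ \xrightarrow{\ x_0\ } \mathscr{X}_+ \xrightarrow{\ p\ } (\mathscr{X},x_0)
\]
in the pointed simplicial homotopy category $\hspnis$, where $(\Spec k)_+$ is the two-point pointed space $S^0$, the first map sends the non-basepoint to $x_0$, and $p$ is the quotient collapsing the disjoint basepoint of $\mathscr{X}_+$ onto $x_0$; an elementary check shows the cofiber of the first map is precisely $(\mathscr{X},x_0)$. Since the passage to $\hop{k}$ is a left Bousfield localization, this remains a cofiber sequence in $\hop{k}$. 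The adjunction between $(-)_+$ and the forgetful functor identifies pointed classes out of $\mathscr{X}_+$ with free classes, $[\mathscr{X}_+,\mathscr{Y}]_{\aone,\bullet} \cong [\mathscr{X},\mathscr{Y}]_{\aone}$, and under this identification the map $p^*$ is exactly the forget-the-basepoint map $f$.

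Next I would apply pointed $\aone$-homotopy classes of maps into $(\mathscr{Y},y_0)$ to the extended cofiber sequence, obtaining the Puppe exact sequence of pointed sets
\[
[S^1_s,(\mathscr{Y},y_0)]_{\aone} \xrightarrow{\ \delta\ } [(\mathscr{X},x_0),(\mathscr{Y},y_0)]_{\aone} \xrightarrow{\ f\ } [\mathscr{X},\mathscr{Y}]_{\aone} \longrightarrow [\Spec k,\mathscr{Y}]_{\aone}.
\]
By the formal theory of cofiber sequences in a pointed model category, the group $G := [S^1_s,(\mathscr{Y},y_0)]_{\aone}$ (coming from $\Sigma_s (\Spec k)_+ \simeq S^1_s$) acts on $[(\mathscr{X},x_0),(\mathscr{Y},y_0)]_{\aone}$, and two classes have the same image under $f$ if and only if they lie in a single $G$-orbit. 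The two outer terms are computed by the definitions of the homotopy sheaves: $[\Spec k,\mathscr{Y}]_{\aone} = \bpi_0^{\aone}(\mathscr{Y})(k)$ and $G = \bpi_1^{\aone}(\mathscr{Y},y_0)(k)$. Here I use that Nisnevich sheafification does not change sections over $\Spec k$, since $\Spec k$ is Henselian local and every Nisnevich cover of it splits, so these sheaf sections agree with the presheaf values that define the homotopy sheaves.

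The hypothesis that $\mathscr{Y}$ is $\aone$-$1$-connected now gives $\bpi_0^{\aone}(\mathscr{Y}) = \ast$ and $\bpi_1^{\aone}(\mathscr{Y},y_0) = 0$, whence $[\Spec k,\mathscr{Y}]_{\aone} = \ast$ and $G = 0$. Surjectivity of $f$ follows from exactness at $[\mathscr{X},\mathscr{Y}]_{\aone}$: the image of $f$ is the fiber over the basepoint of the map to $[\Spec k,\mathscr{Y}]_{\aone} = \ast$, which is the entire set. Injectivity follows from the orbit description of its fibers: as $G$ is trivial, the $G$-orbits are singletons, so $f$ is injective. Together these give the claimed bijection.

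The main obstacle is not any computation particular to this situation but rather securing the two structural inputs: the availability in $\hop{k}$ of the Puppe sequence together with the $G$-action whose orbits compute the fibers of $f$, and the identification of the outer terms with the $\Spec k$-sections of $\bpi_0^{\aone}$ and $\bpi_1^{\aone}$. The first is the standard cofiber-sequence formalism valid in any pointed model category, and the second rests on the harmlessness of sheafification over the point $\Spec k$; once both are in hand, the proof reduces to feeding the vanishing of $\bpi_0$ and $\bpi_1$ into exactness and into the orbit description.
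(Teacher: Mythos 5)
Your proof is correct, but it takes a genuinely different route from the paper's. The paper transports Spanier's classical argument directly into the model category: it replaces $\mathscr{Y}$ by a fibrant, $\aone$-local model (so that $\aone$-$1$-connectedness becomes simplicial $1$-connectedness) and then proves surjectivity and injectivity by explicit lifting arguments against the trivial fibration $\underline{\mathrm{Hom}}(\Delta^1_s,\mathscr{Y})\to\mathscr{Y}$ --- first moving $h(x_0)$ to $y_0$ along a path, then converting a free homotopy between pointed maps into a pointed homotopy after filling the loop $H(x_0,-)$ using $1$-connectedness. You instead run the dual, formal argument: the cofiber sequence $(\Spec k)_+ \to \mathscr{X}_+ \to (\mathscr{X},x_0)$ and its Puppe sequence, together with the orbit refinement (the fibers of $p^*$ are the orbits of the $[S^1_s,(\mathscr{Y},y_0)]_{\aone}$-action), which is valid in any pointed model category in the sense of Quillen; this reduces everything to the vanishing of $[\Spec k,\mathscr{Y}]_{\aone}$ and of $[S^1_s,(\mathscr{Y},y_0)]_{\aone}$, which you correctly deduce from the vanishing of the sheaves $\bpi_0^{\aone}(\mathscr{Y})$ and $\bpi_1^{\aone}(\mathscr{Y},y_0)$ via the observation that Nisnevich sheafification does not change sections over $\Spec k$, every Nisnevich cover of $\Spec k$ being split. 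You are right that these two structural inputs are the load-bearing ones, and both hold: the map $(\Spec k)_+\to\mathscr{X}_+$ is a monomorphism, hence a cofibration in the injective structure, so the quotient is a genuine homotopy cofiber, and cofiber sequences persist under the $\aone$-localization. What your approach buys: it is shorter at the level of ideas and proves more --- with no hypothesis on $\bpi_1^{\aone}$ it exhibits the fibers of $f$ as orbits of $\bpi_1^{\aone}(\mathscr{Y},y_0)(k)$, recovering the full classical statement about free versus pointed classes. What the paper's approach buys: it is self-contained and elementary, using only the lifting axioms of the model structure, with no appeal to the coaction formalism for cofiber sequences or to facts about sheafification over a point.
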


\begin{proof}
We can assume without loss of generality that ${\mathscr Y}$ is simplicially fibrant and $\aone$-local.  Since all spaces are cofibrant (either simplicially or in the $\aone$-local model structure), we have $[{\mathscr X},{\mathscr Y}]_s = [{\mathscr X},{\mathscr Y}]_{\aone}$.  Furthermore, $[{\mathscr X},{\mathscr Y}]_{\aone}$ is the quotient of the set of morphisms $\hom_{\Spc_k}({\mathscr X},{\mathscr Y})$ by the relation of simplicial homotopy.  More precisely, write $\Delta^1_s$ for the simplicial unit interval, which has two $0$-simplices $0$ and $1$.  We say that two morphisms $h_0,h_1: {\mathscr X} \to {\mathscr Y}$ are simplicially homotopic if there exists a morphism $H: {\mathscr X} \times \Delta^1_s \to {\mathscr Y}$ whose specializations to $0$ and $1$ coincide with $h_0$ and $h_1$ respectively.

%Since $X$ is (simplicially) cofibrant and that $Y$ is (simplicially) fibrant and $\A^1$-local, i.e. $\A^1$-fibrant. We have then $[X,Y]_{\A^1}=[X,Y]_s$, where $[\_,\_]_s$ denotes the set of morphisms in the simplicial homotopy category. Since $X$ is cofibrant ans $Y$ is fibrant, we have further $[X,Y]_s=\mathrm{Hom}(X,Y)/\sim$ where two morphism $h_0$ and $h_1$ are equivalent if there exists a morphism $H:X\times\Delta^1\to Y$ whose specializations under the maps $0,1:\Delta^0\to \Delta^1$ are $h_0$ and $h_1$ respectively.

Note that, since ${\mathscr Y}$ fibrant and $\aone$-local, the assumption that ${\mathscr Y}$ is $\aone$-$1$-connected is equivalent to ${\mathscr Y}$ being simplicially $1$-connected.

We first prove that $f$ is surjective. Suppose $h:{\mathscr X}\to {\mathscr Y}$ is a morphism. Since ${\mathscr Y}$ is simplicially connected, there exists a simplicial homotopy between $h(x_0)$ and $y_0$, i.e., there exists a morphism $H:\Delta^1_s\to {\mathscr Y}$ such that $\Delta^1_s(1)=h(x_0)$ and $\Delta^1_s(0)=y_0$. We get a commutative diagram
\[
\xymatrix{\star\ar[r]^-H\ar[d]_{x_0} & \underline{\mathrm{Hom}}(\Delta^1_s,{\mathscr Y})\ar[d] \\
{\mathscr X}\ar[r]_-h & {\mathscr Y}}
\]
where the right-hand vertical map is the evaluation map $G\mapsto G(1)$, the left vertical map is the inclusion of $x_0$.  The evaluation map is a simplicial weak equivalence and, since ${\mathscr Y}$ is simplicially fibrant, also a fibration.  Since ${\mathscr X}$ is cofibrant, the left lifting property guarantees the existence of a map $F: {\mathscr X} \to \underline{\hom}(\Delta^1_s,{\mathscr Y})$ making the two resulting triangles commute, i.e., $F(x_0) = H$, while $ev_1 \circ F = y_0$ and this second composite provides the required lift.

%Since $X$ is cofibrant and the right-hand vertical map is a trivial fibration, there exists $F:X\to \underline{\mathrm{Hom}}(\Delta^1,Y)$ making the diagram commutative. We see that $F(0)$ satisfies $F(0)(x_0)=y_0$ and $F(1)=h$. The map $f$ is therefore surjective.
%\aravind{Still editing the below.}
If $h_0,h_1:({\mathscr X},x_0)\to ({\mathscr Y},y_0)$ are homotopic morphisms, then by definition there exists a morphism $H:{\mathscr X}\times \Delta^1_s\to Y$ such that $H(0)=h_0$ and $H(1)=h_1$. Now $H(x_0,\_):\Delta^1_s\to {\mathscr Y}$ is a path based at $y_0$. By assumption, there exists $G:\Delta^1_s\times\Delta^1_s\to Y$ such that $G(1)=H(x_0,\_)$ and $G(0)=y_0$. We obtain a commutative diagram
\[
\xymatrix{({\mathscr X}\times \{0\}) \cup (\{x_0\}\times\Delta^1_s)\cup ({\mathscr X}\times \{1\})\ar[d]\ar[r]^-N & \underline{\mathrm{Hom}}(\Delta^1_s,{\mathscr Y})\ar[d] \\
{\mathscr X}\times\Delta^1_s\ar[r]_-H & {\mathscr Y}}
\]
where $N$ is defined by $N_{\vert {\mathscr X}\times \{0\}}=h_0$, $N_{\vert \{x_0\}\times\Delta^1}=G$ and $N_{\vert {\mathscr X}\times \{1\}}=h_1$. The left vertical map is a cofibration, while the right vertical map is a fibration and weak equivalence. It follows that there exists $M:{\mathscr X}\times\Delta^1_s\to \underline{\mathrm{Hom}}(\Delta^1,{\mathscr Y})$ making the whole diagram commutative. Composing with the evaluation at $0$ map $\underline{\mathrm{Hom}}(\Delta^1,{\mathscr Y})\to {\mathscr Y}$, we obtain the required base point preserving homotopy between $h_0$ and $h_1$.
\end{proof}

%\begin{lem}\label{lem:groupstructure}
%Let $X,Y$ be pointed spaces and let $f:X\to \Sigma_s^1Y$ be a map such that the map $f^*:[\Sigma_s^1Y,T]_{\A^1}\to [X,T]_{\aone}$ is a bijection for any pointed $\aone$-local space $Z$. Let moreover
%\[
%U\to V\to F
%\]
%be a fiber sequence in $\hop k$ such that
%\begin{enumerate}[1.]
%\item $[X,F]_\aone=\star$.
%\item $[X,\Omega_s^1V]_{\aone}\to [X,\Omega_s^1F]_{\aone}$ is surjective.
%\end{enumerate}
%Then $[X,U]_{\aone}\to [X,V]_{\aone}$ is a bijection.
%\end{lem}

%\begin{proof}
%The fiber sequence induces an exact sequence of groups
%\[
%[\Sigma_s^1Y,\Omega_s^1V]_{\aone}\to [\Sigma_s^1Y,\Omega_s^1F]_{\aone}\to [\Sigma_s^1Y,U]_{\aone}\to [\Sigma_s^1Y,V]_{\aone}\to [\Sigma_s^1Y,F]_{\aone}.
%\]
%Our assumptions insure that the map $[\Sigma_s^1Y,U]_{\aone}\to [\Sigma_s^1Y,V]_{\aone}$ is an isomorphism. The result follows from the commutative diagram
%\[
%\xymatrix{[\Sigma_s^1Y,U]_{\aone}\ar[r]\ar[d] & [\Sigma_s^1Y,V]_{\aone}\ar[d] \\
%[X,U]_{\aone}\ar[r] & [X,V]_{\aone}.}
%\]
%\end{proof}

\subsubsection*{Contracted homotopy sheaves}
Recall that if ${\mathbf A}$ is a strictly $\aone$-invariant sheaf, one defines the contracted sheaf ${\mathbf A}_{-1}$ by means of the formula ${\mathbf A}_{-1}(U) = \ker(s^*: {\mathbf A}(\gm \times U) \to {\mathbf A}(U))$, where $s: U \to \gm \times U$ is the map $1 \times id_U$ coming from the identity section $1$ of $\gm$.  The contraction construction is an exact functor and preserves the property of being strictly $\aone$-invariant (see, e.g., \cite[Lemma 7.33]{MField} or, more precisely, its proof).  One then defines the $i$-fold contracted sheaf ${\mathbf A}_{-i}$ inductively by ${\mathbf A}_{-i} = ({\mathbf A}_{-i+1})_{-1}$.  The main import of the contraction construction lies in the following result of F. Morel, which we will use repeatedly in the sequel.

\begin{thm}[{\cite[Theorem 6.13]{MField}}]
\label{thm:contractions}
If $({\mathscr X},x)$ is a pointed $\aone$-connected space, then there are canonical isomorphisms for any $i\geq 1$ and $j\geq 0$
\[
\bpi_{i,j}^{\aone}({\mathscr X},x) \cong \bpi_i^{\aone}({\mathbf R}\Omega^j_{\gm}{\mathscr X}) \cong \bpi_i^{\aone}({\mathscr X},x)_{-j};
\]
here ${\mathbf R}\Omega^j_{\gm}(\cdot)$ is the (derived) iterated $\gm$-loop space functor.
\end{thm}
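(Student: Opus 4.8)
The plan is to treat the two isomorphisms separately: the left-hand one, $\bpi_{i,j}^{\aone}(\mathscr{X})\cong\bpi_i^{\aone}({\mathbf R}\Omega^j_{\gm}\mathscr{X})$, is a formal consequence of an adjunction, while the right-hand one, $\bpi_i^{\aone}({\mathbf R}\Omega^j_{\gm}\mathscr{X})\cong\bpi_i^{\aone}(\mathscr{X})_{-j}$, I would reduce to the single-contraction case $j=1$ by induction and then prove via a splitting of $\gm_+$. Throughout, the indispensable input is Morel's theorem that the $\aone$-homotopy sheaves of a pointed $\aone$-connected space are strongly $\aone$-invariant (hence strictly $\aone$-invariant in degrees $\geq 2$), which is what makes the contractions $(-)_{-j}$ legitimate and exact.

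For the first isomorphism I would use that, in the pointed $\aone$-local model structure on $\Spc_{k,\bullet}$, smashing with the cofibrant object $\gm^{\wedge j}$ is left Quillen with right adjoint the internal function space $\underline{\hom}_\bullet(\gm^{\wedge j},-)$; passing to total derived functors gives, for every smooth $U$, a bijection
\[
[S^i_s \wedge \gm^{\wedge j}\wedge U_+,\mathscr{X}]_{\aone}\;\cong\;[S^i_s\wedge U_+,{\mathbf R}\Omega^j_{\gm}\mathscr{X}]_{\aone}
\]
natural in $U$. Since Nisnevich sheafification is exact, sheafifying both sides identifies $\bpi_{i,j}^{\aone}(\mathscr{X})$ with $\bpi_i^{\aone}({\mathbf R}\Omega^j_{\gm}\mathscr{X})$; the boldface ${\mathbf R}$ records that one must first take an $\aone$-local fibrant replacement of $\mathscr{X}$ so that $[-,-]_{\aone}$ is genuinely the derived mapping set.

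For the second isomorphism I would induct on $j$, the case $j=0$ being trivial. Because ${\mathbf R}\Omega^j_{\gm}={\mathbf R}\Omega_{\gm}\circ{\mathbf R}\Omega^{j-1}_{\gm}$ and $(-)_{-j}=((-)_{-(j-1)})_{-1}$, the inductive step amounts to the case $j=1$ applied to $\mathscr{Y}:={\mathbf R}\Omega^{j-1}_{\gm}\mathscr{X}$, whose homotopy sheaves are $(\bpi_i^{\aone}\mathscr{X})_{-(j-1)}$ by induction and hence strictly $\aone$-invariant since, as recalled before the statement, contraction preserves this property. For $j=1$ the key geometric input is the canonical splitting $\gm_+\simeq S^0\vee\gm$ in $\hop{k}$ (with $S^0=(\Spec k)_+$), induced by the unit $1:\Spec k\to\gm$ and the structure morphism $\gm\to\Spec k$ retracting the disjoint basepoint. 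Smashing with $U_+$ yields $(\gm\times U)_+\simeq U_+\vee(\gm\wedge U_+)$, and applying $[S^i_s\wedge-,\mathscr{Y}]_{\aone}$ and sheafifying (using that $U\mapsto\gm\times U$ preserves Nisnevich covers, so that sheafification commutes with it) produces a natural direct-sum decomposition
\[
\bpi_i^{\aone}(\mathscr{Y})(\gm\times U)\;\cong\;\bpi_i^{\aone}(\mathscr{Y})(U)\oplus\bpi_{i,1}^{\aone}(\mathscr{Y})(U)
\]
in which projection onto the first summand is exactly the restriction $s^*$ along $s=1\times\mathrm{id}_U$. Thus the kernel defining $\bpi_i^{\aone}(\mathscr{Y})_{-1}$ is the second summand $\bpi_{i,1}^{\aone}(\mathscr{Y})$, which the first isomorphism identifies with $\bpi_i^{\aone}({\mathbf R}\Omega_{\gm}\mathscr{Y})$; this closes the induction. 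The case $i=1$ runs identically, with sheaves of (possibly nonabelian) groups in place of abelian ones.

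The step I expect to be the main obstacle is hidden in the passage to derived functors: one must know that ${\mathbf R}\Omega_{\gm}$ is compatible with $\aone$-localization, i.e.\ that the function space out of $\gm$ of an $\aone$-local fibrant space is again $\aone$-local, so that its simplicial homotopy sheaves agree with the $\aone$-homotopy sheaves that enter the contraction. This is false for a general object in place of $\gm$ and is precisely where Morel's strong $\aone$-invariance theorem—together with the fact that contraction preserves strong and strict $\aone$-invariance—is essential. I would therefore isolate this compatibility as the central lemma, deduce the two displayed isomorphisms as above, and expect essentially all the real work to lie in establishing that ${\mathbf R}\Omega_{\gm}$ preserves $\aone$-locality.
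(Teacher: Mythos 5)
A preliminary remark: the paper does not prove this statement at all — it is quoted verbatim as Morel's Theorem 6.13 of \cite{MField} — so your proposal can only be measured against Morel's argument, not against anything in the paper. Your first isomorphism is indeed purely formal, and your overall skeleton (reduce to $j=1$, then identify $\ker(s^*)$ using the split cofibration $U_+\to(\gm\times U)_+\to\gm\wedge U_+$) is the right one. Two corrections before the main point, one small and one symptomatic. Small: the unstable splitting $\gm_+\simeq S^0\vee\gm$ you invoke is false (already in topology: $[X_+,Y]_\bullet$ is the set of \emph{free} homotopy classes $X\to Y$, while $[S^0\vee X,Y]_\bullet$ is pointed classes times $\pi_0(Y)$, and these differ whenever $\pi_1$ acts nontrivially); what is true, and suffices for $i\geq 1$, is that applying $[S^i_s\wedge-,\mathscr{X}]_{\aone}$ to the Puppe sequence of the displayed cofibration yields a split exact sequence of groups, because $s^*$ has the section $p^*$. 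Symptomatic: your closing paragraph locates the essential difficulty in showing that $\underline{\hom}_\bullet(\gm,-)$ of an $\aone$-local fibrant space is again $\aone$-local, and asserts this fails for a general object in place of $\gm$. In fact this is formal and holds for \emph{every} pointed space $\mathscr{A}$ in place of $\gm$: for $\mathscr{Z}$ fibrant and $\aone$-local, $[(\mathscr{B}\times\aone)_+\wedge\mathscr{A},\mathscr{Z}]_s\cong[\mathscr{B}_+\wedge\mathscr{A},\mathscr{Z}]_s$ because smashing with any space preserves $\aone$-weak equivalences; no strong $\aone$-invariance enters there.

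The genuine gap is the step you dispose of in a parenthesis: the passage from presheaves to sheaves in your displayed ``direct-sum decomposition.'' Your Puppe argument identifies $\ker(s^*)$ on the \emph{presheaf} $P_i\colon U\mapsto[S^i_s\wedge(\gm\times U)_+,\mathscr{X}]_{\aone}$ with the presheaf $U\mapsto[S^i_s\wedge\gm\wedge U_+,\mathscr{X}]_{\aone}$. But the contraction $\bpi_i^{\aone}(\mathscr{X})_{-1}$ is by definition the kernel of $s^*$ on sections of the \emph{sheafification} $aP_i$ over $\gm\times U$. To pass from one to the other you must show that the canonical comparison map $a\bigl(P_i\circ(\gm\times-)\bigr)\to(aP_i)\circ(\gm\times-)$ is an isomorphism. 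The fact that $\gm\times-$ preserves Nisnevich covers gives only that the target is a sheaf and that this comparison map exists; it does not make it an isomorphism, and for a general presheaf it is not one. On stalks at a henselian local $U$ the assertion becomes: $[S^i_s\wedge(\gm\times U)_+,\mathscr{X}]_{\aone}\to\bigl(\bpi_i^{\aone}\mathscr{X}\bigr)(\gm\times U)$ is bijective, i.e.\ the Nisnevich descent/Postnikov spectral sequence over the \emph{non-local} scheme $\gm\times U$ collapses, which requires $H^p_{\Nis}\bigl(\gm\times U,\bpi_{i+p}^{\aone}(\mathscr{X})\bigr)=0$ for all $p\geq 1$. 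That vanishing is precisely where Morel's hard theorems enter: $\aone$-connectedness forces the homotopy sheaves to be strongly (hence, when abelian, strictly) $\aone$-invariant, and his unramified-sheaf/Gersten-resolution machinery then yields the cohomological triviality of $\gm\times(\text{henselian local})$ for such coefficients. Without this input your argument establishes only a presheaf-level statement, strictly weaker than the theorem; so the proposal has a real hole at its central step, while the difficulty it flags as central is not a difficulty at all.
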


A convenient summary of calculations of contractions used in this paper, and other basic properties of the contraction construction are presented below.

\begin{cor}
\label{cor:tatehomotopysheavesofgln}
If $j \geq 0$, $i \geq 1$, and $n \geq 2$ are integers, then there are canonical isomorphisms $\bpi_{i,j}^{\aone}(GL_n)\cong \bpi_{i}^{\aone}(GL_n)_{-j}$.
\end{cor}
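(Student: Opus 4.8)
The plan is to deduce the result from Morel's Theorem~\ref{thm:contractions}, whose $\aone$-connectedness hypothesis \emph{fails} for $GL_n$ (one has $\bpi_0^{\aone}(GL_n) \cong \gm$, detected by the determinant). The device for circumventing this is the observation that the non-connected factor splits off already at the level of schemes: the section $t \mapsto \mathrm{diag}(t,1,\dots,1)$ of $\det\colon GL_n \to \gm$ produces a morphism
\[
\phi\colon SL_n \times \gm \longrightarrow GL_n, \qquad (A,t) \longmapsto \mathrm{diag}(t,1,\dots,1)\cdot A,
\]
which is an isomorphism of $k$-schemes, with inverse $M \mapsto (\mathrm{diag}((\det M)^{-1},1,\dots,1)M,\ \det M)$. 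Since $\phi$ sends $(\mathrm{id},1)$ to $\mathrm{id}$, it is an isomorphism of pointed spaces, hence a pointed $\aone$-weak equivalence $SL_n \times \gm \isomto GL_n$.

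Granting this, I would exploit two elementary facts. First, for pointed spaces $\mathscr{X},\mathscr{Y}$ the bigraded homotopy sheaves split, $\bpi_{i,j}^{\aone}(\mathscr{X}\times\mathscr{Y}) \cong \bpi_{i,j}^{\aone}(\mathscr{X}) \times \bpi_{i,j}^{\aone}(\mathscr{Y})$, and likewise for the $\bpi_i^{\aone}$, because $\mathscr{X}\times\mathscr{Y}$ is the categorical product in $\ho{k}$, so that $[Z,\mathscr{X}\times\mathscr{Y}]_{\aone} = [Z,\mathscr{X}]_{\aone}\times[Z,\mathscr{Y}]_{\aone}$ for every $Z$. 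Second, $\gm$ is $\aone$-rigid: being $\aone$-invariant it is already $\aone$-local, and as a discrete (simplicially constant) sheaf it admits no nonzero class from a simplicially connected source; hence for $i \geq 1$ and $j \geq 0$ one has $[S^i_s \wedge \gm^{\wedge j}\wedge U_+,\gm]_{\aone}=\ast$, so that $\bpi_{i,j}^{\aone}(\gm)=0$ and $\bpi_i^{\aone}(\gm)=0$. Combining the two, for $i \geq 1$ the equivalence $\phi$ induces canonical isomorphisms $\bpi_{i,j}^{\aone}(GL_n) \cong \bpi_{i,j}^{\aone}(SL_n)$ and $\bpi_i^{\aone}(GL_n) \cong \bpi_i^{\aone}(SL_n)$.

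Now $SL_n$ is $\aone$-connected (it is generated by elementary matrices, each joined to the identity by an explicit $\aone$-homotopy), so Theorem~\ref{thm:contractions} applies verbatim and gives $\bpi_{i,j}^{\aone}(SL_n) \cong \bpi_i^{\aone}(SL_n)_{-j}$. Since the contraction functor is exact, and in particular additive, the isomorphism $\bpi_i^{\aone}(GL_n)\cong\bpi_i^{\aone}(SL_n)$ contracts to $\bpi_i^{\aone}(GL_n)_{-j}\cong\bpi_i^{\aone}(SL_n)_{-j}$, and concatenating the identifications yields the asserted
\[
\bpi_{i,j}^{\aone}(GL_n) \cong \bpi_{i,j}^{\aone}(SL_n) \cong \bpi_i^{\aone}(SL_n)_{-j} \cong \bpi_i^{\aone}(GL_n)_{-j}.
\]
The only non-formal input is Morel's theorem, which I may assume; everything else is bookkeeping around the scheme-level splitting $GL_n \cong SL_n \times \gm$. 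I expect the single point requiring care to be the insistence that one \emph{cannot} feed $GL_n$ (or its $\gm$-factor) directly into Theorem~\ref{thm:contractions}, and that the $\gm$-factor must instead be discarded via $\aone$-rigidity before contracting; one should also verify that the product splitting of homotopy sheaves is compatible with contraction, which holds precisely because contraction is additive.
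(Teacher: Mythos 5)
Your proof is correct, but it takes a genuinely different route from the paper's. The paper handles the failure of $\aone$-connectedness of $GL_n$ by delooping: using the fiber sequence $GL_n \to EGL_n \to BGL_n$ with $EGL_n$ being $\aone$-contractible, it identifies $\bpi_{i,j}^{\aone}(GL_n) \cong \bpi_{i+1,j}^{\aone}(BGL_n)$, applies Theorem \ref{thm:contractions} to the $\aone$-connected space $BGL_n$, and shifts back using the $j=0$ case of the same identification. You instead remove the obstruction at the level of schemes via the pointed isomorphism $SL_n \times \gm \isomto GL_n$, discard the $\gm$ factor by rigidity, and apply Theorem \ref{thm:contractions} to $SL_n$. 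Both arguments reduce to the same theorem of Morel applied to an $\aone$-connected space, but the space differs ($BGL_n$ versus $SL_n$), and so do the supporting inputs: the paper's version needs the fact that $GL_n \to EGL_n \to BGL_n$ is an $\aone$-fiber sequence (the Morel/Wendt theory invoked in Section \ref{section:metastableSL2n}), with connectedness of $BGL_n$ coming for free, whereas yours needs only that finite products are homotopy products, $\aone$-rigidity of $\gm$, and $\aone$-connectedness of $SL_n$ --- the last being a genuine theorem of Morel rather than a triviality (your parenthetical about elementary matrices is the heart of the standard proof, but it also requires that $\mathscr{X} \to \bpi_0^{\aone}(\mathscr{X})$ is an epimorphism of sheaves, so this should be cited rather than sketched). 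A pleasant feature of your route is that the composite $SL_n \to SL_n \times \gm \isomto GL_n$ is the natural inclusion, so the resulting isomorphism $\bpi_i^{\aone}(SL_n) \cong \bpi_i^{\aone}(GL_n)$ for $i \geq 1$ is the canonical one, agreeing with what the paper later deduces from the fiber sequence $\gm \to BSL_n \to BGL_n$. One small economy: in your final step, mere functoriality of contraction transports an isomorphism to an isomorphism; exactness and additivity are not needed there.
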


\begin{proof}
We use the fiber sequence $GL_n \to EGL_n \to BGL_n$ discussed in the next section.  Since $EGL_n$ is $\aone$-contractible, there are isomorphisms $\bpi_{i,j}^{\aone}(GL_n) \cong \bpi_{i+1,j}^{\aone}(BGL_n)$, and $BGL_n$ is $\aone$-connected.  We can then apply the aforementioned result of F. Morel to finish the proof (note: we cannot apply the aforementioned result directly to $GL_n$ since it fails to be $\aone$-connected).
\end{proof}

\begin{ex}[Contracted homotopy sheaves of $B\gm$]\label{ex:gm}
The space $\gm$ is fibrant as a simplicial sheaf and also $\aone$-local.  As a consequence, we see that $\bpi_0^{\aone}(\gm) = \gm$, while the higher $\aone$-homotopy sheaves of $\gm$ are necessarily trivial for {\em any} choice of base point, i.e., $\gm$ is discrete from the standpoint of $\aone$-homotopy theory.  There is an $\aone$-fiber sequence
\[
\gm \longrightarrow E\gm \longrightarrow B\gm;
\]
in fact, $E\gm$ is an $\aone$-covering space of $B\gm$ in the sense of \cite[\S 7.1]{MField}. Since $E\gm$ is $\aone$-contractible, there are thus identifications $\bpi_{i}^{\aone}(\gm) \cong \bpi_{i+1}^{\aone}(B\gm)$ for any $i\geq 0$. Note moreover that $B\gm$ is $\aone$-connected and we can thus apply Theorem \ref{thm:contractions} to compute the contractions of its homotopy sheaves. We find
\[
\bpi_{i,j}^{\aone}(B\gm)=\begin{cases} \gm & \text{if $i=1$ and $j=0$.} \\
\Z & \text{if $i=1$ and $j=1$.} \\
0 & \text{otherwise.}
\end{cases}
\]
\end{ex}

\begin{ex}
\label{ex:pi0ofGLn}
The canonical inclusion $SL_n \to GL_n$ (as the kernel of the determinant homomorphism) fits into an $\aone$-fiber sequence (see the next section) of the form
\[
\gm \longrightarrow BSL_n \longrightarrow BGL_n.
\]
The space $BSL_n$ is $\aone$-simply connected; this follows because $SL_n$ is $\aone$-connected and (as we will see in Section \ref{section:metastableSL2n}) there is an $\aone$-fiber sequence of the form
\[
SL_n \longrightarrow ESL_n \longrightarrow BSL_n.
\]
Therefore, we can identify $\bpi_1^{\aone}(BGL_n) \cong \bpi_0^{\aone}(GL_n) \cong \gm$.  Since we can identify $\bpi_{i,j}^{\aone}(GL_n) \cong \bpi_{i+1,j}^{\aone}(BGL_n)$, we see that $\piaone_{0,j}(GL_n) = \bpi_{1,j}^{\aone}(BGL_n)$, and the latter sheaf is $\gm$ if $j = 0$, $\Z$ if $j = 1$ and is trivial for $j > 1$.
\end{ex}

\subsubsection*{Some exact sequences involving Milnor-Witt K-theory sheaves}
Write $K^{MW}_*(k)$ for the graded Milnor-Witt K-theory ring.  Recall that $K^{MW}_*(k)$ is generated by symbols $[a]$ with $a\in k^{\times}$ of degree $+1$ and a symbol $\eta$ of degree $-1$ satisfying various relations \cite[Definition 5.1]{Morel04}.  Write $I^*(k)$ for the graded ring corresponding to the powers of the fundamental ideal in the Witt ring; recall that $I^m(k)$ is additively generated by the classes of $m$-fold Pfister forms for $m\geq 1$ and that $I^m(k)=W(k)$ for $m\leq 0$.  Assigning to a symbol $a \in k^{\times}$ the class of the Pfister form $\langle \langle a \rangle \rangle$ defines a group homomorphism $K^{MW}_1(k) \to I^1(k)$; this homomorphism extends to a graded ring homomorphism $K^{MW}_*(k) \to I^*(k)$.  Likewise, if $K^M_*(k)$ denotes the $\Z$-graded Milnor K-theory ring (here $K^M_{i}(k) = 0$ if $i < 0$), there is also a homomorphism of graded rings $K^{MW}_*(k) \to K^M_*(k)$ that sends $\eta$ to $0$.

Let $k_*(k) = K^M_*(k)/2K^M_*(k)$ (we beg the reader's indulgence for this unfortunate choice of notation, which will persist only through this paragraph).  There is a canonical homomorphism of graded rings $K^M_*(k) \to k_*(k)$ and a homomorphism $K^M_*(k)\to I^*(k)/I^{*+1}(k)$ mapping pure symbols of the shape $\{a_1,\ldots,a_n\}$ to the Pfister form $\langle \langle a_1,\ldots,a_n\rangle\rangle$.  This homomorphism factors through $k_*(k)$ and induces an isomorphism $k_*(k)\isomt I^*(k)/I^{*+1}(k)$ by the Milnor conjecture on quadratic forms \cite{OVV}. Composing the projection $I^*(k)\to I^*(k)/I^{*+1}(k)$ with the inverse of this isomorphism defines a homomorphism of graded rings $I^*(k) \to k_*(k)$.  Morel \cite[Theorem 5.3]{Morel04} shows that these various homomorphisms fit into a cartesian square of graded rings of the form
\[
\xymatrix{
K^{MW}_*(k) \ar[r]\ar[d] & K^M_*(k) \ar[d] \\
I^*(k) \ar[r] & k_*(k).
}
\]

The above square can be sheafified in an appropriate sense: the objects and morphisms in the fiber square are compatible with residue maps and yield a cartesian square of unramified sheaves of graded rings
\begin{equation}\label{eqn:Milnor-Witt}
\xymatrix{
\K^{MW}_* \ar[r]\ar[d] & \K^M_*\ar[d] \\
{\mathbf I}^*\ar[r] & \K^M_*/2.
}
\end{equation}
We refer the reader to \cite[\S 2.2-4]{MMilnor} for a detailed discussion of the unramified Milnor K-theory sheaf $\K^M_n$, the unramified sheaf ${\mathbf I}^m$ and the homomorphism ${\mathbf I}^* \to \K^M_*/2$, which Morel calls a sheafification of Milnor's homomorphism.  We refer the reader to \cite[\S 3]{MField} for the construction of the sheaf $\K^{MW}_n$ and the homomorphism in the left hand column and the top row.  Because the above diagram is cartesian, one deduces immediately the existence of the following exact sequences.

\begin{prop}
\label{prop:milnorwittexactsequences}
For every $n\in\Z$, there are short exact sequences of the form
\[
0 \longrightarrow {\mathbf I}^{n+1} \longrightarrow \K^{MW}_n \longrightarrow \K^M_n \longrightarrow 0,
\]
and
\[
0 \longrightarrow 2\K^M_n \longrightarrow \K^{MW}_n \longrightarrow {\mathbf I}^{n} \longrightarrow 0.
\]
Moreover, the map $\K^{MW}_n \to \K^{MW}_{n-1}$ induced by multiplication by $\eta$ factors as a composite $\K^{MW}_{n} \to {\mathbf I}^{n} \to \K^{MW}_{n-1}$, where the two constituent maps are those in the above exact sequences.
\end{prop}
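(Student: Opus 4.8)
The plan is to read off all three statements degree by degree from the cartesian square \eqref{eqn:Milnor-Witt}. Fix $n$ and consider the induced pullback square of sheaves of abelian groups
\[
\xymatrix{
\K^{MW}_n \ar[r]^-{f}\ar[d]_-{g} & \K^M_n\ar[d]^-{p} \\
{\mathbf I}^n\ar[r]_-{q} & \K^M_n/2,
}
\]
so that $\K^{MW}_n \cong {\mathbf I}^n \times_{\K^M_n/2} \K^M_n$. First I would record the elementary fact, valid in any abelian category and in particular for sheaves of abelian groups, that in a cartesian square the projection $g$ restricts to an isomorphism $\ker f \isomto \ker q$ and, symmetrically, $f$ restricts to an isomorphism $\ker g \isomto \ker p$; moreover, a pullback of an epimorphism is an epimorphism, so $f$ is an epimorphism because $q$ is, and $g$ is an epimorphism because $p$ is. Since $p$ and $q$ are visibly sheaf epimorphisms and kernels of morphisms of sheaves of abelian groups are computed sectionwise, these assertions hold at the level of sheaves.

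To produce the first sequence I would identify the bottom row: $p$ is reduction modulo $2$, while $q$ is the projection ${\mathbf I}^n \to {\mathbf I}^n/{\mathbf I}^{n+1}$ followed by the isomorphism ${\mathbf I}^n/{\mathbf I}^{n+1} \isomto \K^M_n/2$ coming from the Milnor conjecture. Thus $\ker q = {\mathbf I}^{n+1}$, and combining the isomorphism $\ker f \cong \ker q$ with the surjectivity of $f$ gives
\[
0 \longrightarrow {\mathbf I}^{n+1} \longrightarrow \K^{MW}_n \stackrel{f}{\longrightarrow} \K^M_n \longrightarrow 0.
\]
Symmetrically, $\ker p = 2\K^M_n$, and the isomorphism $\ker g \cong \ker p$ together with the surjectivity of $g$ yields
\[
0 \longrightarrow 2\K^M_n \longrightarrow \K^{MW}_n \stackrel{g}{\longrightarrow} {\mathbf I}^n \longrightarrow 0.
\]

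For the factorization of multiplication by $\eta$, I would argue as follows. Because $\K^{MW}_* \to \K^M_*$ is a graded ring homomorphism sending $\eta$ to $0$, for every local section $x$ of $\K^{MW}_n$ the product $\eta x$ is annihilated by $f\colon \K^{MW}_{n-1} \to \K^M_{n-1}$. By the first sequence applied in degree $n-1$, the section $\eta x$ therefore lies in the image of the inclusion ${\mathbf I}^{n} \hookrightarrow \K^{MW}_{n-1}$, so multiplication by $\eta$ factors through that inclusion and defines a map $\K^{MW}_n \to {\mathbf I}^n$. It remains to identify this map with $g$. Writing $\phi\colon \K^{MW}_* \to {\mathbf I}^*$ for the left vertical ring homomorphism, whose degree-$n$ component is $g$, multiplicativity gives $\phi(\eta x) = \phi(\eta)\,\phi(x)$, where $\phi(\eta) \in {\mathbf I}^{-1} = W(k)$. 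In Morel's normalization $\phi(\eta)$ is the unit $1 \in W(k)$, so multiplication by $\phi(\eta)$ realizes exactly the inclusion ${\mathbf I}^n \hookrightarrow {\mathbf I}^{n-1}$; tracing $\eta x$ through the pullback description then shows that the induced map $\K^{MW}_n \to {\mathbf I}^n$ is $g$, as claimed.

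The first two sequences are genuinely immediate once the pullback lemma is in hand. The one point that requires care is the last assertion: the main obstacle is verifying that the map $\K^{MW}_n \to {\mathbf I}^n$ induced by $\eta$ is exactly $g$ and not some twist of it, which comes down to the precise value $\phi(\eta) = 1 \in W(k)$ and to the convention matching multiplication by $\eta$ with the inclusions ${\mathbf I}^{m+1} \hookrightarrow {\mathbf I}^m$; I would isolate this as a direct appeal to the constructions underlying \eqref{eqn:Milnor-Witt} in \cite{Morel04} and \cite{MField}.
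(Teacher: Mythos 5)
Your proposal is correct. For the two short exact sequences it is essentially identical to the paper's treatment: the paper likewise deduces them ``immediately'' from cartesianness of the square \eqref{eqn:Milnor-Witt}, and you merely spell out the standard abelian-category facts (in a pullback square, each projection restricts to an isomorphism from its kernel onto the kernel of the opposite leg, and the pullback of an epimorphism is an epimorphism). The genuine difference is in the factorization of multiplication by $\eta$. The paper proves this by writing down both constituent maps on generators --- $[a_1]\cdots[a_n] \mapsto \langle\langle a_1,\ldots,a_n\rangle\rangle$ for $\K^{MW}_n \to {\mathbf I}^n$, and $\langle\langle a_1,\ldots,a_n\rangle\rangle \mapsto \eta[a_1]\cdots[a_n]$ for ${\mathbf I}^n \to \K^{MW}_{n-1}$ --- so that the composite is visibly multiplication by $\eta$ on symbols, with all details of well-definedness and normalization deferred to \cite[\S 5]{Morel04}. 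You argue structurally instead: the ring map to $\K^M_*$ kills $\eta$, so multiplication by $\eta$ factors uniquely through $\ker f_{n-1} = {\mathbf I}^n$, and the factored map $\K^{MW}_n \to {\mathbf I}^n$ is identified with $g$ via multiplicativity of $\phi$ and the normalization $\phi(\eta) = 1 \in W(k)$; implicitly you are applying the degree-$(n-1)$ component of $\phi$ to the equation expressing the factorization and using that $g_{n-1}$ composed with the kernel inclusion is the natural inclusion ${\mathbf I}^n \subseteq {\mathbf I}^{n-1}$ (which is automatic from the pullback description) together with its injectivity. This is sound, and it buys two things the paper's check does not: the argument is uniform in $n \in \Z$ (the symbol computation only directly applies for $n \geq 1$, where pure symbols and Pfister forms additively generate, whereas the proposition is stated for all $n$), and it concentrates all convention-dependence into the single fact $\phi(\eta) = 1$, which you rightly isolate as the one point requiring an appeal to Morel's constructions --- precisely the same appeal the paper itself makes. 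What the paper's direct computation buys in exchange is concreteness: it exhibits explicitly what both constituent maps do on symbols and Pfister forms, rather than characterizing them abstractly.
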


\begin{proof}
The only thing that remains to be checked is the final statement.  To that end, the map $K^{MW}_n(k) \to I^n(k)$ is defined by sending a symbol $[a_1]\cdots[a_{n}]$ to $\langle\langle a_1,\ldots,a_{n} \rangle\rangle$, and the map $I^n(k) \to K^{MW}_{n-1}(k)$ is defined by sending a Pfister form $\langle\langle a_1,\ldots,a_n \rangle \rangle$ to $\eta [a_1]\cdots[a_{n}]$.  We refer the reader to \cite[\S 5]{Morel04} for more details.
\end{proof}

\subsubsection*{Further results on contractions}
Write $\K^Q_i$ for the Nisnevich sheaf associated with the presheaf $U \mapsto K_i(U)$, where $K_i$ denotes Quillen K-theory; these sheaves are called Quillen K-theory sheaves.

\begin{lem}
\label{lem:contractionsofquillenandmilnor}
For any integers $i,j \geq 0$ and any integer $n > 0$, there are canonical isomorphisms
\[
(\K^M_i)_{-j} \cong \begin{cases}\K^{M}_{i-j} & \text{ if } j \leq i \\ 0 & \text{ if } j > i \end{cases}, \text{ and}
\]
\[
(\K^Q_i)_{-j} \cong \begin{cases}\K^{Q}_{i-j} & \text{ if } j \leq i \\ 0 & \text{ if } j > i \end{cases}.
\]
\end{lem}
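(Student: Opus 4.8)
The plan is to reduce everything to the single contraction $(\cdot)_{-1}$ and then iterate. Since the contraction functor is exact and preserves strict $\aone$-invariance (as recalled above), once one knows that $(\K^M_i)_{-1} \cong \K^M_{i-1}$ and $(\K^Q_i)_{-1} \cong \K^Q_{i-1}$ for every $i \geq 1$, the general formulas follow by induction on $j$ from the defining relation $\mathbf{A}_{-j} = (\mathbf{A}_{-j+1})_{-1}$. The vanishing for $j > i$ drops out of the base case: both $\K^M_0$ and $\K^Q_0$ are the constant sheaf $\Z$, and a direct inspection of the definition shows $(\Z)_{-1} = 0$. Indeed, the restriction $s^*$ along the unit section $1 \times \mathrm{id}_U$ identifies $\Z(\gm \times U)$ with $\Z(U)$ (the inclusion $\{1\}\times U \hookrightarrow \gm \times U$ is a bijection on connected components because $\gm$ is connected), so its kernel vanishes. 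Thus one contraction past degree $0$ annihilates the sheaf, and all further contractions preserve $0$.

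For Milnor K-theory I would obtain the degree shift $(\K^M_i)_{-1} \cong \K^M_{i-1}$ from the contraction computation for Milnor--Witt K-theory. Morel's computations (\cite{MField}, \cite{MMilnor}) give $(\K^{MW}_n)_{-1} \cong \K^{MW}_{n-1}$ and $(\mathbf{I}^{n+1})_{-1} \cong \mathbf{I}^{n}$. Applying the exact contraction functor to the first short exact sequence of Proposition \ref{prop:milnorwittexactsequences},
\[
0 \longrightarrow \mathbf{I}^{n+1} \longrightarrow \K^{MW}_n \longrightarrow \K^M_n \longrightarrow 0,
\]
produces a short exact sequence $0 \to \mathbf{I}^{n} \to \K^{MW}_{n-1} \to (\K^M_n)_{-1} \to 0$. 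Comparing this with the instance of that same exact sequence in degree $n-1$ identifies the quotient $(\K^M_n)_{-1}$ canonically with $\K^M_{n-1}$ (the case $n=0$, giving $(\Z)_{-1}=0=\K^M_{-1}$, is the direct computation above). Iterating and invoking the base case yields the stated formula, with $(\K^M_i)_{-j} = 0$ as soon as $i - j < 0$.

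For Quillen K-theory the key input is the Bass fundamental theorem. Because every $U$ in $\Sm_k$ is regular, the Nil terms vanish by homotopy invariance, and for such $U$ there is a natural decomposition $K_i(\gm \times U) \cong K_i(U) \oplus K_{i-1}(U)$ in which the projection onto the first summand is the restriction $s^*$ along the unit section. Hence $\ker(s^*) \cong K_{i-1}(U)$, naturally in $U$. Reading this off on stalks, where the Nisnevich sheaf $\K^Q_i$ agrees with the K-theory of regular henselian local rings, gives $(\K^Q_i)_{-1} \cong \K^Q_{i-1}$. The vanishing of negative K-theory of regular rings ensures $\K^Q_m = 0$ for $m < 0$, so iteration produces the stated formula together with the vanishing for $j > i$.

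The step I expect to require the most care is the bookkeeping that makes the Quillen decomposition compatible with Nisnevich sheafification and with the contraction functor, namely checking that the natural presheaf splitting furnished by the fundamental theorem survives sheafification and that $s^*$ at the level of sheaves really is the projection onto the $K_i$-summand. Passing to stalks circumvents this difficulty, since there the sheaf and the K-theory presheaf coincide on regular henselian local rings and the fundamental theorem applies verbatim. By contrast, the Milnor K-theory case is comparatively formal once the Milnor--Witt and $\mathbf{I}^*$ contractions are taken as known, so the only genuine content there is the diagram chase extracting $(\K^M_n)_{-1}$ from the contracted exact sequence.
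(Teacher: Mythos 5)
Your route is genuinely different from the paper's: there, both displayed isomorphisms are established by the method of Proposition \ref{prop:contractionsofMW}, namely a flasque Gersten resolution of the sheaf over a local smooth base $X$, the localization sequence for $X\times\gm\subset X\times\A^1$, and d\'evissage identifying $H^1_{X\times\{0\}}(X\times\A^1,\cdot)$ with the once-contracted sheaf evaluated on $X$. Your Milnor K-theory argument is a workable alternative, with two caveats. First, inside this paper the isomorphism $(\K^{MW}_n)_{-1}\cong\K^{MW}_{n-1}$ is \emph{deduced from} the present lemma (that is exactly how the second half of Proposition \ref{prop:contractionsofMW} is proved), so you must import it from Morel's independent computation, as you do; citing the paper's proposition instead would be circular. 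Second, having the two short exact sequences $0\to\mathbf{I}^{n}\to\K^{MW}_{n-1}\to(\K^M_n)_{-1}\to 0$ and $0\to\mathbf{I}^{n}\to\K^{MW}_{n-1}\to\K^M_{n-1}\to 0$ does not by itself identify the two quotients: you need the square comparing the contracted inclusion $(\mathbf{I}^{n+1}\hookrightarrow\K^{MW}_n)_{-1}$ with the standard inclusion $\mathbf{I}^n\hookrightarrow\K^{MW}_{n-1}$ to commute. It does---on sections over fields both composites send the generator $\langle\langle t\rangle\rangle\cdot\langle\langle a_1,\ldots,a_n\rangle\rangle$ to $\eta[t][a_1]\cdots[a_n]$, and morphisms of strictly $\aone$-invariant sheaves are determined by their values on fields---but this verification is part of the proof, not a formality.

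The Quillen half has a genuine gap. By definition $(\K^Q_i)_{-1}(U)=\ker\bigl(s^*\colon\K^Q_i(\gm\times U)\to\K^Q_i(U)\bigr)$, and $\K^Q_i(\gm\times U)$ means $H^0_{\Nis}(\gm\times U,\K^Q_i)$, the sections of the \emph{sheaf}, not the K-group $K_i(\gm\times U)$; these differ in general (already $K_1(X)\to H^0_{\Nis}(X,\K^Q_1)=\O^{\times}_X(X)$ has kernel $SK_1(X)$). Passing to stalks does not circumvent this: the stalk of $(\K^Q_i)_{-1}$ at a point with henselization $R$ is $\colim_U\ker\bigl(H^0_{\Nis}(\gm\times U,\K^Q_i)\to H^0_{\Nis}(U,\K^Q_i)\bigr)$ over Nisnevich neighbourhoods $U$, and the schemes $\gm\times U$ (or $\gm\times\Spec R$) are never local, so none of these groups is a K-group of a henselian local ring to which the Bass fundamental theorem applies verbatim. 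Closing the gap means proving $K_i(R[t,t^{-1}])\isomto H^0_{\Nis}(\gm\times\Spec R,\K^Q_i)$, e.g.\ by showing $H^p_{\Nis}(\gm\times\Spec R,\K^Q_q)=0$ for $p>0$ (localization, homotopy invariance, and d\'evissage on the Gersten resolution) and then invoking the descent spectral sequence---which is precisely the machinery of the paper's proof, with an extra layer on top. If you want to avoid Gersten complexes entirely, a cleaner repair is representability: $\K^Q_i=\bpi_i^{\aone}(\Z\times BGL_{\infty})$, Theorem \ref{thm:contractions} (applied to the $\aone$-connected component $BGL_{\infty}$ of the base point) gives $(\K^Q_i)_{-1}=\bpi_{i,1}^{\aone}(\Z\times BGL_{\infty})$, and the periodicity equivalence $\Z\times BGL_{\infty}\simeq\mathbf{R}\Omega_{\pone}(\Z\times BGL_{\infty})$ of \cite{MV} shifts this to $\K^Q_{i-1}$.
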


\begin{proof}[Remarks on the proof.]
The proofs of these two statements can be obtained by the same method as that of Proposition \ref{prop:contractionsofMW}, which is a bit more delicate, so we will prove that statement instead.
\end{proof}

\begin{rem}
Exactness of the contraction construction and Lemma \ref{lem:contractionsofquillenandmilnor} imply that $(\K^M_i/n)_{-j} \cong \K^M_{i-j}/n$ and $(\K^Q_i/n)_{-j} \cong \K^Q_{i-j}/n$ for any integer $n$.
\end{rem}

\begin{prop}
\label{prop:contractionsofMW}
For any integer $j$, there are canonical isomorphisms $({\mathbf I}^n)_{-j} \cong {\mathbf I}^{n-j}$ and $(\K^{MW}_n)_{-j} \cong \K^{MW}_{n-j}$.
\end{prop}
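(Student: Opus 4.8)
The plan is to reduce everything to a single contraction and one genuine computation. Since the contraction functor is exact and $\mathbf{A}_{-j}=(\mathbf{A}_{-j+1})_{-1}$, an immediate induction on $j$ reduces both claims to the case $j=1$, namely $(\K^{MW}_n)_{-1}\cong\K^{MW}_{n-1}$ and $(\mathbf{I}^n)_{-1}\cong\mathbf{I}^{n-1}$ for every integer $n$ (the base case $j=0$ is trivial, and I prove these for all $n\in\Z$, noting that $\mathbf{I}^n$ is the Witt sheaf $\mathbf{I}^0$ for $n\le 0$ and that $\K^{MW}_n\cong\mathbf{I}^0$ for $n<0$). The uniform mechanism behind all such computations---Milnor, Quillen, Milnor--Witt, and $\mathbf{I}^\ast$---is that multiplication by the class of the coordinate $t$ on $\gm$ produces the shifted summand, and that this class is annihilated by the restriction $s^\ast$ along the unit section: indeed $s^\ast[t]=[1]=0$ in $\K^{MW}_1$, just as $\{1\}=0$ in $\K^M_1$. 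This is precisely why coordinate multiplication lands in the kernel defining the contraction.

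I would establish the Milnor--Witt case directly. Define $\theta\colon\K^{MW}_{n-1}\to(\K^{MW}_n)_{-1}$ by $\alpha\mapsto[t]\cdot\pi^\ast\alpha$, where $\pi\colon\gm\times U\to U$ is the projection and $[t]\in\K^{MW}_1(\gm\times U)$ is the class of the coordinate; by the previous remark this takes values in $\ker(s^\ast)=(\K^{MW}_n)_{-1}$, and it is visibly a morphism of sheaves. To check that $\theta$ is an isomorphism it suffices, since both source and target are strictly $\aone$-invariant (the contraction functor preserves this property) and hence determined by their sections over fields, to verify bijectivity on sections over every finitely generated field extension $F/k$. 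There $(\K^{MW}_n)_{-1}(F)=\ker\big(\K^{MW}_n(\Spec F[t,t^{-1}])\to\K^{MW}_n(\Spec F)\big)$, and I would compute $\K^{MW}_n(\Spec F[t,t^{-1}])$ from the Milnor--Witt localization sequence for $\gm_F$ over $F$ of \cite{Morel04}: the pullback $\pi^\ast$ is split by $s^\ast$, so $\K^{MW}_n(\Spec F[t,t^{-1}])=\pi^\ast\K^{MW}_n(F)\oplus\ker(s^\ast)$, while the residue $\partial_0$ at $t=0$ identifies the complementary summand $\ker(s^\ast)$ with $\K^{MW}_{n-1}(F)$. Since $\partial_0([t]\cdot\pi^\ast\alpha)=\alpha$, the map $\theta$ is exactly this identification, giving bijectivity.

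For the sheaf $\mathbf{I}^n$ I would avoid a second direct computation and bootstrap from Proposition \ref{prop:milnorwittexactsequences}. Contracting the short exact sequence $0\to\mathbf{I}^{m+1}\to\K^{MW}_m\to\K^M_m\to 0$ (contraction is exact) and feeding in the isomorphism just proved together with $(\K^M_m)_{-1}\cong\K^M_{m-1}$ from Lemma \ref{lem:contractionsofquillenandmilnor} yields an exact sequence $0\to(\mathbf{I}^{m+1})_{-1}\to\K^{MW}_{m-1}\to\K^M_{m-1}\to 0$. By naturality of the contraction the right-hand map is the canonical projection $\K^{MW}_{m-1}\to\K^M_{m-1}$ (it is the contraction of $\K^{MW}_m\to\K^M_m$, which respects the coordinate-class splittings because the projection is a ring map sending $[t]$ to $\{t\}$), and its kernel is $\mathbf{I}^m$ by the same proposition in degree $m-1$. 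Hence $(\mathbf{I}^{m+1})_{-1}\cong\mathbf{I}^m$, i.e. $(\mathbf{I}^n)_{-1}\cong\mathbf{I}^{n-1}$. Alternatively, one may contract the Mayer--Vietoris sequence $0\to\K^{MW}_n\to\K^M_n\oplus\mathbf{I}^n\to\K^M_n/2\to 0$ attached to the cartesian square \eqref{eqn:Milnor-Witt} to pass between the two statements.

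The main obstacle is the direct Milnor--Witt computation of the second paragraph---specifically, establishing the splitting of $\K^{MW}_n(\Spec F[t,t^{-1}])$ and verifying that $[t]$-multiplication realizes the complementary summand. Unlike Milnor (and Quillen) K-theory, Milnor--Witt K-theory carries residues twisted by line bundles (orientation data at each codimension-one point), so one must track these twists and check that the residue at the origin is compatibly normalized; this is the sense in which the statement is ``more delicate'' than Lemma \ref{lem:contractionsofquillenandmilnor}, whose proof follows the identical template but with untwisted residues. I would lean on \cite{Morel04} and \cite{MMilnor} for the requisite residue formalism.
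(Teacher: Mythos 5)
Your argument is correct, but it runs the paper's proof in reverse. The paper's direct computation is on the Witt-theoretic side: for ${\mathbf I}^n$ it uses that the Gersten--Witt complex of \cite[\S 1]{Fasel07}, filtered by the powers of the fundamental ideal, is a flasque resolution of the Zariski sheaf ${\mathbf I}^n$, takes the localization sequence of the open embedding $X\times\gm\subset X\times\A^1$ over a local $X$, and identifies $H^1_{Zar,X\times\{0\}}(X\times\A^1,{\mathbf I}^n)\cong {\mathbf I}^{n-1}(X)$ by d\'evissage \cite[Proposition 3.30]{Fasel07}; the Milnor--Witt case is then deduced by contracting the sequence $0\to\K^{MW}_n\to\K^M_n\oplus{\mathbf I}^n\to\K^M_n/2\to 0$ attached to the cartesian square \eqref{eqn:Milnor-Witt} --- precisely the ``alternative'' you mention at the end of your third paragraph --- using Lemma \ref{lem:contractionsofquillenandmilnor} and the strict $\aone$-invariance of ${\mathbf I}^n$ (quoted from \cite{Panin10}). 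You instead compute $(\K^{MW}_n)_{-1}$ directly, via Morel's localization/residue sequence over fields and the $[t]$-multiplication splitting, and then bootstrap ${\mathbf I}^n$ by contracting $0\to{\mathbf I}^{m+1}\to\K^{MW}_m\to\K^M_m\to 0$ from Proposition \ref{prop:milnorwittexactsequences}; your compatibility check that the contracted surjection is again the canonical projection (because $\K^{MW}_*\to\K^M_*$ is a ring map sending $[t]$ to $\{t\}$ and commuting with residues) is the same kind of verification the paper carries out in Corollary \ref{contraction}. Both routes are sound. What yours buys: the ``delicate'' half is a computation already available in Morel's framework (\cite{Morel04}, \cite{MField}), so it could in principle be replaced by a citation, and the strict $\aone$-invariance of ${\mathbf I}^{m+1}$ needed to contract your exact sequence comes for free, as it is the kernel of a morphism of strictly $\aone$-invariant sheaves, so the appeal to \cite{Panin10} can be avoided. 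What the paper's buys: the hard step happens where d\'evissage and the flasque Gersten--Witt resolution are available off the shelf from the second author's earlier work, and the twisted-residue bookkeeping you flag as the main obstacle in the Milnor--Witt setting is avoided, being absorbed into the d\'evissage isomorphism together with the chosen trivialization of the normal sheaf of $X\times\{0\}$ in $X\times\A^1$.
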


\begin{proof}
We first prove the statement for the sheaf ${\mathbf I}^n$. Let $X=\Spec A$, where $A$ is a local ring. In order to prove that $({\mathbf I}^n)_{-1}={\mathbf I}^{n-1}$, it suffices to prove that we have an exact sequence
\[
0 \longrightarrow {\mathbf I}^n(X) \longrightarrow {\mathbf I}^n(X\times\gm) \longrightarrow {\mathbf I}^{n-1}(X) \longrightarrow 0.
\]
Now ${\mathbf I}^n$ is in particular a Zariski sheaf for any $n\in\Z$ and the Gersten-Witt complex (filtered by the powers of the fundamental ideal) described for instance in \cite[\S 1]{Fasel07} provides a flasque resolution of this sheaf. The long exact sequence associated to the open embedding $X\times\gm\subset X\times\A^1$ together with the fact that $X$ is local reads then as
\[
0 \longrightarrow {\mathbf I}^n(X) \longrightarrow {\mathbf I}^n(X\times\gm) \longrightarrow H^1_{Zar, X\times\{0\}}(X\times\A^1,{\mathbf I}^{n}) \longrightarrow 0.
\]
We can use d\'evissage as described in \cite[Proposition 3.30]{Fasel07} (together with the obvious trivialization of the normal sheaf of $X\times\{0\}$ in $X\times\A^1$) to get an isomorphism of groups ${\mathbf I}^{n-1}(X)\to H^1_{Zar, X\times\{0\}}(X\times\A^1,{\mathbf I}^{n})$ as required.

To prove the corresponding statement for $\K^{MW}_n$, we first observe that all the sheaves involved in the Cartesian square (\ref{eqn:Milnor-Witt}) are strictly $\A^1$-invariant (see \cite{Panin10} for the sheaf ${\mathbf I}^n$). Contracting the exact sequence
\[
0 \longrightarrow \K^{MW}_n \longrightarrow \K^M_n\oplus {\mathbf I}^n \longrightarrow \K^m_n/2 \longrightarrow 0
\]
$j$ times, we then obtain an exact sequence
\[
0 \longrightarrow (\K^{MW}_n)_{-j} \longrightarrow (\K^M_n)_{-j}\oplus ({\mathbf I}^n)_{-j} \longrightarrow (\K^m_n/2)_{-j} \longrightarrow 0
\]
The result follows then from Lemma \ref{lem:contractionsofquillenandmilnor} together with the result for ${\mathbf I}^n$.
\end{proof}

%%%%%%%%%%%%%%%%%%%%%%%%%%%%%%%%%%%%%%%%%%%%%%%%%%%%%%%%%%%%%%%%%%%%%%%%%%%%%%%%%%%%%%%%%%%%%%%%%%%%%%%%%%%%%%%%%%%%%%
\section{The first non-stable homotopy sheaf of $SL_{n}$ }\label{section:metastableSL2n}
The goal of this section is to compute the group $\bpi_{n-1}^{\aone}(SL_{n})$ for $n\geq 2$.  The answer is different depending on whether $n$ is odd or even.  To perform this computation, we use the theory of fiber sequences in $\aone$-homotopy, which was developed in \cite{MField} and extended by \cite{Wendt}; a convenient summary of the necessary results can be found in \cite[\S 2]{AsokPi1}.  In particular, we use the fact that $SL_n$-torsors (or $GL_n$-torsors), and their ``associated fiber spaces" give rise to $\aone$-fiber sequences; the precise statements we use can be found in \cite[Proposition 5.1, Proposition 5.2, and Theorem 5.3]{Wendt}.  We also use the fact that associated with an $\aone$-fiber sequence is a corresponding long exact sequence in $\aone$-homotopy sheaves (see, e.g., \cite[Lemma 2.10]{AsokPi1}).  A description of the connecting homomorphism in the long exact sequence is given in \cite[\S 3.7]{AsokPi1}.

\subsubsection*{Computations in the stable range}
We quickly review the computation of the homotopy sheaves of $SL_n$ and $GL_n$ in the stable range, which is due to Morel.  By \cite[\S 4 Theorem 3.13]{MV}, there is a space $\Z \times BGL_{\infty}$ that represents Quillen K-theory for smooth $k$-schemes (one can use an infinite Grassmannian as a particular model for $BGL_{\infty}$).  The next results describe the $\aone$-homotopy sheaves of $SL_n$ or $GL_n$ in the stable range in terms of Quillen K-theory.

\begin{lem}
For any $i\geq 0$ and any $n\geq 1$, we have isomorphisms $\bpi_i^{\aone}(GL_n)\simeq \bpi_{i+1}^{\aone}(BGL_n)$ and $\bpi_i^{\aone}(SL_n)\simeq \bpi_{i+1}^{\aone}(BSL_n)$. Moreover, the natural map $BSL_n\to BGL_n$ induces isomorphisms $\bpi_{i+1}^{\aone}(BSL_n)\simeq \bpi_{i+1}^{\aone}(BGL_n)$ for any $i\geq 1$.
\end{lem}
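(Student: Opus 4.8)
The plan is to deduce all three isomorphisms from the long exact sequence in $\aone$-homotopy sheaves attached to appropriate $\aone$-fiber sequences, using only the vanishing of the homotopy sheaves of an $\aone$-contractible space and the $\aone$-discreteness of $\gm$. For the first pair of isomorphisms I would invoke the $\aone$-fiber sequences $GL_n \to EGL_n \to BGL_n$ and $SL_n \to ESL_n \to BSL_n$, whose existence as genuine $\aone$-fiber sequences is guaranteed by the results of \cite{Wendt} recalled at the beginning of this section. Since $EGL_n$ and $ESL_n$ are $\aone$-contractible, all their $\aone$-homotopy sheaves vanish; feeding this into the associated long exact sequence (see \cite[Lemma 2.10]{AsokPi1}) forces the connecting morphism $\bpi_{i+1}^{\aone}(BGL_n) \to \bpi_i^{\aone}(GL_n)$ to be an isomorphism for all $i \geq 1$, and similarly for $SL_n$. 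The remaining case $i = 0$ amounts to the identification $\bpi_1^{\aone}(BGL_n) \cong \bpi_0^{\aone}(GL_n)$ recorded in Example \ref{ex:pi0ofGLn}, together with its evident $SL_n$-analogue (both sides being trivial, since $SL_n$ is $\aone$-connected).

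For the comparison map I would use the $\aone$-fiber sequence
\[
\gm \longrightarrow BSL_n \longrightarrow BGL_n
\]
arising from the inclusion of $SL_n$ into $GL_n$ as the kernel of the determinant, as recalled in Example \ref{ex:pi0ofGLn}. By Example \ref{ex:gm}, the sheaf $\gm$ is $\aone$-discrete, so $\bpi_j^{\aone}(\gm) = 0$ for every $j \geq 1$. The relevant segment of the long exact sequence reads
\[
\bpi_{i+1}^{\aone}(\gm) \longrightarrow \bpi_{i+1}^{\aone}(BSL_n) \longrightarrow \bpi_{i+1}^{\aone}(BGL_n) \longrightarrow \bpi_i^{\aone}(\gm),
\]
where the middle arrow is the map induced by $BSL_n \to BGL_n$. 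For $i \geq 1$ both outer terms vanish, since $i + 1 \geq 2$ and $i \geq 1$; exactness then shows that the induced map $\bpi_{i+1}^{\aone}(BSL_n) \to \bpi_{i+1}^{\aone}(BGL_n)$ is an isomorphism, which is the assertion.

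I expect the difficulties here to be organizational rather than conceptual, so the main obstacle is really just careful bookkeeping. One must ensure that the three displayed sequences are honest $\aone$-fiber sequences (not merely fiber sequences of simplicial sheaves), which is exactly the input supplied by \cite{Wendt}, and one must pay attention to the bottom of the long exact sequence, where the terms are pointed sets or possibly non-abelian groups rather than abelian groups. This last point explains why the comparison isomorphism is asserted only for $i \geq 1$: there the relevant homotopy sheaves $\bpi_{i+1}$ already lie safely in the abelian range, and the statement becomes a formal consequence of exactness together with the two vanishing inputs.
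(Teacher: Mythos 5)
Your proposal is correct and follows essentially the same route as the paper: the fiber sequences $G \to EG \to BG$ with $\aone$-contractible total space for the first two isomorphisms, and the $\aone$-fiber sequence $\gm \to BSL_n \to BGL_n$ together with the vanishing of $\bpi_j^{\aone}(\gm)$ for $j \geq 1$ for the comparison map. The only cosmetic difference is that the paper additionally records the splitting of the connecting map $GL_n \to \gm$ by the diagonal inclusion (which it needs elsewhere to identify $\bpi_1^{\aone}(BGL_n) \cong \gm$), whereas your streamlined long-exact-sequence argument suffices for the range $i \geq 1$ asserted in the lemma.
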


\begin{proof}
For $G=GL_n, SL_n$, we can use the explicit geometric models of $BG$ described in \cite[\S 4.2]{MV} to get a fiber sequence
\[
G \longrightarrow EG \longrightarrow BG
\]
where $EG$ is an $\aone$-contractible space. The long exact sequence of homotopy sheaves associated to this sequence yields the first statement. For the second statement, observe that we have a fiber sequence
\[
\gm \longrightarrow BSL_n \longrightarrow BGL_n.
\]
The connecting homomorphism ${\mathbf R}\Omega^1_s BGL_n \cong GL_n \to \gm$ is precisely the determinant map, and this is split by the inclusion $\gm \to GL_n$ given by $t \mapsto diag(t,1,\ldots,1)$.  Now, $BSL_n$ is $\aone$-simply connected since $SL_n$ is $\aone$-connected.  Since the space $\gm$ is $\aone$-rigid in the sense of \cite[\S 4 Example 2.4]{MV}, we have $\bpi_i^{\aone}(\gm) = 1$ for $i \geq 1$ (see also Example \ref{ex:gm}).  The inclusion $\gm \hookrightarrow GL_n$ induces an isomorphism $\gm = \bpi_0^{\aone}(\gm) \isomto \bpi_0^{\aone}(GL_n)$ as mentioned in Example \ref{ex:pi0ofGLn}.  Combining these observations gives the isomorphism $\bpi_i^{\aone}(SL_n) \cong \bpi_i^{\aone}(GL_n)$ for $i \geq 1$ and any $n$.
\end{proof}

\begin{thm}
\label{thm:stabilizationsln}
Suppose $i > 0$ and $n > 1$ are integers.  The morphisms
\[
\bpi_i^{\aone}(SL_{n-1}) \longrightarrow \bpi_i^{\aone}(SL_n)
\]
are epimorphisms for $i \leq n-2$ and isomorphisms for $i \leq n-3$. Furthermore, we have $\bpi_i^{\aone}(SL_n)\simeq \K_{i+1}^Q$ if $1\leq i\leq n-2$.
\end{thm}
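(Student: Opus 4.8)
The plan is to realize the stabilization map geometrically and then feed the resulting $\aone$-fiber sequence into the long exact sequence of $\aone$-homotopy sheaves. Concretely, $SL_n$ acts on $\A^n$ and, for $n \geq 2$, acts transitively on $\A^n \setminus 0$; the orbit map $g \mapsto g\cdot e_n$ is a Zariski-locally trivial quotient map $SL_n \to \A^n \setminus 0$. Its fiber is the stabilizer $H$ of $e_n$, which is \emph{not} quite $SL_{n-1}$ but rather an extension $1 \to \ga^{n-1} \to H \to SL_{n-1} \to 1$, the vector group $\ga^{n-1}$ being the bottom-row entries. Since $\ga^{n-1}$ is $\aone$-contractible, the standard inclusion $SL_{n-1} \hookrightarrow H$ is an $\aone$-weak equivalence. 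Invoking the torsor-to-fiber-sequence machinery of \cite[Propositions 5.1, 5.2 and Theorem 5.3]{Wendt}, I obtain an $\aone$-fiber sequence
\[
SL_{n-1} \longrightarrow SL_n \longrightarrow \A^n \setminus 0
\]
in which the first map is the standard inclusion, and $\A^n \setminus 0 \simeq S^{n-1}_s \wedge \gm^{\wedge n}$ is the relevant motivic sphere.

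First I would record the connectivity input: by Morel's computation, $\A^n \setminus 0$ is $\aone$-$(n-2)$-connected, so $\bpi_j^{\aone}(\A^n \setminus 0) = 0$ for all $j \leq n-2$ \cite{MField}. The long exact sequence of $\aone$-homotopy sheaves \cite[Lemma 2.10]{AsokPi1} then reads
\[
\cdots \to \bpi_{i+1}^{\aone}(\A^n\setminus 0) \to \bpi_i^{\aone}(SL_{n-1}) \to \bpi_i^{\aone}(SL_n) \to \bpi_i^{\aone}(\A^n\setminus 0) \to \cdots.
\]
Exactness at $\bpi_i^{\aone}(SL_n)$ together with the vanishing of $\bpi_i^{\aone}(\A^n\setminus 0)$ for $i \leq n-2$ yields surjectivity in that range; exactness at $\bpi_i^{\aone}(SL_{n-1})$ together with the further vanishing of $\bpi_{i+1}^{\aone}(\A^n\setminus 0)$, which holds when $i+1 \leq n-2$, i.e. $i \leq n-3$, upgrades this to an isomorphism. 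This proves the first two assertions.

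For the final identification I would pass to the stable range. The isomorphism statement just proved shows that, for fixed $i$ with $1 \leq i \leq n-2$, the transition maps $\bpi_i^{\aone}(SL_N) \to \bpi_i^{\aone}(SL_{N+1})$ are isomorphisms for every $N \geq n$ (since $i \leq n-2 \leq N-2$), so $\bpi_i^{\aone}(SL_n) \cong \colim_N \bpi_i^{\aone}(SL_N)$. The preceding lemma identifies $\bpi_i^{\aone}(SL_m) \cong \bpi_{i+1}^{\aone}(BGL_m)$ for $i \geq 1$, while the Morel-Voevodsky representability of Quillen K-theory by $\Z \times BGL_\infty$ \cite[\S 4 Theorem 3.13]{MV} gives $\bpi_{i+1}^{\aone}(BGL_\infty) \cong \K^Q_{i+1}$. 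Stringing these together yields
\[
\bpi_i^{\aone}(SL_n) \cong \bpi_{i+1}^{\aone}(BGL_n) \cong \bpi_{i+1}^{\aone}(BGL_\infty) \cong \K^Q_{i+1}
\]
for $1 \leq i \leq n-2$.

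The main obstacle is the correct set-up of the fiber sequence: one must verify that $SL_n \to \A^n\setminus 0$ is a locally trivial torsor under a \emph{smooth} group scheme so that Wendt's theorem applies, and, more importantly, deal with the fact that the geometric stabilizer is $H$ rather than $SL_{n-1}$ — it is precisely the $\aone$-contractibility of the unipotent part $\ga^{n-1}$ that makes the standard inclusion the correct fiber inclusion. Everything else is bookkeeping in the long exact sequence together with a citation of Morel's connectivity estimate and the representability of K-theory; the degree ranges are forced by exactly where $\bpi_{\ast}^{\aone}(\A^n\setminus 0)$ first becomes nonzero.
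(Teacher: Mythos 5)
Your proposal is correct and takes essentially the same route as the paper: the paper produces the fiber sequence $SL_{n-1} \to SL_n \to \A^n \setminus 0$ by citing Morel's fiber sequence $SL_{n-1} \to SL_n \to SL_n/SL_{n-1}$ together with the observation that $SL_n/SL_{n-1} \to \A^n\setminus 0$ is Zariski-locally trivial with affine-space fibers, whereas you package the same geometry through the orbit map and the $\aone$-contractibility of the unipotent part $\ga^{n-1}$ of the stabilizer. All remaining steps --- Morel's $(n-2)$-connectivity of $\A^n\setminus 0$, the long exact sequence of the $\aone$-fibration, stabilization of the homotopy sheaves in the given range, and representability of Quillen K-theory by $\Z \times BGL_{\infty}$ --- coincide with the paper's argument (your colimit bookkeeping is in fact slightly more explicit than the paper's ``the result follows'').
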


\begin{proof}
By \cite[Proposition 8.12]{MField}, we have a fiber sequence
\[
SL_{n-1} \longrightarrow SL_{n} \longrightarrow SL_{n}/SL_{n-1}.
\]
Moreover, the map $SL_{n} \to {\mathbb A}^{n} \setminus 0$ given by projection onto the first column factors through a morphism $SL_{n}/SL_{n-1} \to {\mathbb A}^n \setminus 0$; the latter morphism is Zariski locally trivial with fibers isomorphic to ${\mathbb A}^{n-1}$ and is therefore an isomorphism in $\ho{k}$.  Now, \cite[Theorem 6.39]{MField} shows that ${\mathbb A}^n \setminus 0$ is $\aone$-$(n-2)$-connected, and the first assertion then follows from the long exact sequence in $\aone$-homotopy sheaves associated with an $\aone$-fibration.

Finally, by representability of algebraic K-theory, $\K^Q_{i+1}$ can also be described as $\bpi_{i+1}^{\aone}(\Z \times BGL_{\infty})$.  Moreover, for $i \geq 0$, the only contribution to this sheaf comes from the $\aone$-connected component of the base-point, so $\bpi_{i+1}^{\aone}(\Z \times BGL_{\infty}) \cong \bpi_{i+1}^{\aone}(BGL_{\infty})$. Now the above lemma shows that $\bpi_{i+1}^{\aone}(BSL_{\infty})=\bpi_{i+1}^{\aone}(BGL_{\infty})$ and the result follows.
\end{proof}

\subsubsection*{A short exact sequence describing $\bpi_{n-2}^{\aone}(SL_{n-1})$, $n \geq 3$}
We now consider the long exact sequence in $\aone$-homotopy sheaves associated with the $\aone$-fiber sequence
\[
SL_n/SL_{n-1}\longrightarrow BSL_{n-1}\longrightarrow BSL_{n}
\]
of \cite[Proposition 8.11]{MField}. This exact sequence takes the form
\[
\bpi_{n-1}^{\aone}(SL_n) \longrightarrow \bpi_{n-1}^{\aone}(SL_n/SL_{n-1}) \longrightarrow \bpi_{n-2}^{\aone}(SL_{n-1}) \longrightarrow \bpi_{n-2}^{\aone}(SL_n) \longrightarrow 0.
\]
We also observe that $\bpi_1^{\aone}(SL_2)$ and $\bpi_1^{\aone}(SL_3)$ are known to be sheaves of abelian groups and are therefore strictly $\aone$-invariant \cite[Corollary 6.2]{MField}, i.e., the sequence above is always a sequence of strictly $\aone$-invariant sheaves of groups. Furthermore, we know that $SL_n/SL_{n-1}$ is $\aone$ weak-equivalent to $\A^n\setminus 0$ and thus $\bpi_{n-1}^{\aone}(SL_n/SL_{n-1})\simeq \K^{\MW}_n$ by \cite[Theorem 6.40]{MField}.

Since the sheaves $\bpi_{n-2}^{\aone}(SL_n)$ are in the stable range by Theorem \ref{thm:stabilizationsln}, we can rewrite the exact sequence of the previous paragraph as
\[
\bpi_{n-1}^{\aone}(SL_n) \stackrel{q_{n-1}}{\longrightarrow} \K^{\MW}_n \stackrel{\delta_{n-1}}{\longrightarrow} \bpi_{n-2}^{\aone}(SL_{n-1}) \longrightarrow \K^Q_{n-1} \longrightarrow 0.
\]
Our goal is to understand the image of $\bpi_{n-1}^{\aone}(SL_n) \to \K^{\MW}_n$.  To this end, observe that we have a fiber sequence
\[
SL_{n+1}/SL_n\longrightarrow BSL_n\longrightarrow BSL_{n+1}
\]
which yields a diagram
\begin{equation}\label{equ:useful}
\xymatrix{ & & SL_{n+1}/SL_n\ar[d] \\
SL_n/SL_{n-1}\ar[r] & BSL_{n-1}\ar[r] & BSL_{n}\ar[d] \\
 & & BSL_{n+1}   }
\end{equation}
and we can consider the composite map $\Omega_s^1SL_{n+1}/SL_n\to \Omega_s^1BSL_n=SL_n\to SL_n/SL_{n-1}$. Applying $\bpi_{n-1}^{\aone}$, we get a map
\[
\xymatrix{\K_{n+1}^{MW}\ar[r]^-{\delta_n} & \bpi_{n-1}^{\aone}(SL_n)\ar[r]^-{q_{n-1}} & \K_n^{MW}}
\]
that we describe more precisely in the next section.

\subsubsection*{An Euler class computation}
Since $SL_{n+1}/SL_{n}$ is $\aone$-$(n-1)$-connected, if ${\mathbf A}$ is any strictly $\aone$-invariant sheaf, \cite[Theorem 3.30]{ADExcision} gives a canonical bijection
\[
H^{n}_{\Nis}(SL_{n+1}/SL_{n},{\mathbf A}) \isomto \hom_{\Ab^{\aone}_k}(\bpi_{n}^{\aone}(SL_{n+1}/SL_{n}),{\mathbf A}).
\]
Applying these observations with ${\mathbf A} = \K^{\MW}_{n}$, the morphism $q_{n-1} \circ \delta_{n}$ is determined by an element of $H^{n}_{\Nis}(SL_{n+1}/SL_{n},\K^{\MW}_{n})$.

The map $SL_{n+1}/SL_n\to BSL_n$ classifies the $SL_{n}$-torsor $SL_{n+1}\to SL_{n+1}/SL_n$. Recall that $Q_{2n+1}$ is the quadric hypersurface in $\A^{2n+2}$ defined by the equation $\sum_{i=1}^{n+1}x_iy_i=1$. We write $A_{2n+1}$ for the ring of global sections on $Q_{2n+1}$. Recall moreover that projecting a matrix to its first row and the first column of its inverse yields an isomorphism $SL_{n+1}/SL_n\simeq Q_{2n+1}$.

\begin{lem}
\label{lem:universaltorsor}
The vector bundle on $Q_{2n+1}$ corresponding to the torsor $SL_{n+1}\to SL_{n+1}/SL_n\simeq Q_{2n+1}$ is the total space of the stably free module of rank $n$ on $Q_{2n+1}$ defined by the (split) exact sequence
\[
\xymatrix@C=4em{0\ar[r] & P_{n}\ar[r] & (A_{2n+1})^{n+1}\ar[r]^-{(x_1,\ldots,x_{n+1})} & A_{2n+1}\ar[r] & 0.}
\]
\end{lem}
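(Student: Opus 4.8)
The plan is to exhibit the bundle in the statement as the vector bundle associated, via the standard $n$-dimensional representation of $SL_n$, to the (Zariski-locally trivial) $SL_n$-torsor $SL_{n+1}\to SL_{n+1}/SL_n\simeq Q_{2n+1}$, and then to compute this associated bundle by hand. The starting point is purely representation-theoretic. Embed $SL_n\hookrightarrow SL_{n+1}$ as the lower-right block, i.e.\ as the stabiliser of $e_1$ and of the functional $e_1^*$. Then the defining representation $k^{n+1}$ of $SL_{n+1}$, restricted to $SL_n$, splits $SL_n$-equivariantly as $\langle e_1\rangle\oplus\langle e_2,\ldots,e_{n+1}\rangle$: the first summand is the trivial representation and the second is the standard representation $k^n$. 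In particular there is a split short exact sequence of $SL_n$-representations
\[
0\longrightarrow k^n\longrightarrow k^{n+1}\longrightarrow k\longrightarrow 0,
\]
with $k^n=\langle e_2,\ldots,e_{n+1}\rangle$ the standard representation and $k$ the trivial quotient.

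First I would apply the associated-bundle construction for the torsor $\pi\colon SL_{n+1}\to Q_{2n+1}$ to this sequence. Since the torsor is Zariski-locally trivial this construction is exact (locally it is just $V\mapsto \O\otimes_k V$). Two of the three terms are computed immediately: because $k^{n+1}$ is the restriction of a representation of the \emph{whole} group $SL_{n+1}$, its associated bundle is canonically trivial, the $SL_{n+1}$-equivariance forcing an identification of $SL_{n+1}\times^{SL_n}k^{n+1}$ with $Q_{2n+1}\times k^{n+1}=(A_{2n+1})^{n+1}$ under which the class of $(M,w)$ corresponds to $(\pi(M),M^{-1}w)$; and the trivial quotient representation $k$ yields the trivial line bundle $A_{2n+1}$. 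Hence the associated-bundle sequence reads
\[
0\longrightarrow E\longrightarrow (A_{2n+1})^{n+1}\longrightarrow A_{2n+1}\longrightarrow 0,
\]
where $E$ is precisely the rank-$n$ bundle associated with the standard representation, i.e.\ the bundle classified by the torsor. Because the representation sequence is $SL_n$-equivariantly split, this sequence is split as well, so $E$ is automatically stably free, as the statement requires.

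The remaining, and main, task is to identify the surjection $(A_{2n+1})^{n+1}\to A_{2n+1}$ with the unimodular row $(x_1,\ldots,x_{n+1})$. Under the trivialisation above, a section $u$ of $(A_{2n+1})^{n+1}$ over the coset of $M$ corresponds to the vector $w=Mu\in k^{n+1}$, and the surjection is induced by the equivariant projection $k^{n+1}\to k=k^{n+1}/\langle e_2,\ldots,e_{n+1}\rangle$, which sends $w$ to its first coordinate $e_1^*w=e_1^*Mu$. Recalling that the isomorphism $SL_{n+1}/SL_n\simeq Q_{2n+1}$ sends $M$ to its first row $(x_1,\ldots,x_{n+1})=e_1^*M$ (together with the first column of $M^{-1}$), the functional $e_1^*M$ is exactly the row $(x_1,\ldots,x_{n+1})$. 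Thus the surjection is $(x_1,\ldots,x_{n+1})$ and $E=\ker(x_1,\ldots,x_{n+1})=P_n$; the splitting is visibly given by the complementary column $(y_1,\ldots,y_{n+1})$, which satisfies $\sum_i x_iy_i=1$.

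I expect the genuinely delicate point to be this final identification, rather than the formal homological input. One must pin down whether the functional that appears is built from $M$ or from $M^{-1}$ --- equivalently, whether $E$ is $P_n$ or its dual --- and this requires fixing with care the left/right conventions for the $SL_n$-action, the chosen block embedding, and the row/column normalisations in the isomorphism $SL_{n+1}/SL_n\simeq Q_{2n+1}$. With the conventions above these choices are consistent and the computation lands on $(x_1,\ldots,x_{n+1})$, giving $P_n$.
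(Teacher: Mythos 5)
Your proof is correct, and in substance it computes the same associated bundle with the same conventions as the paper --- indeed, unwinding your trivialization $[(M,w)]\mapsto(\pi(M),M^{-1}w)$ on the subrepresentation $\langle e_2,\ldots,e_{n+1}\rangle$ recovers exactly the map $(v,M)\mapsto(M^te_1,M^{-1}e_1,M^{-1}v')$, $v'=(0,v)$, that the paper writes down. The difference lies in how the identification is certified. The paper defines this map explicitly from $\A^n\times SL_{n+1}$ to the explicitly presented total space $V$ of $P_n$, checks $SL_n$-equivariance so that it descends to the associated bundle $\A^n\times^{SL_n}SL_{n+1}$, and then verifies by hand that the descended map is an isomorphism locally on $Q_{2n+1}$. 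You instead push the split $SL_n$-equivariant sequence $0\to k^n\to k^{n+1}\to k\to 0$ through the associated-bundle functor, invoking two general facts: exactness of that functor (using Zariski-local triviality of $SL_n$-torsors, $SL_n$ being special), and the canonical triviality of the bundle associated to a representation that extends to the total group $SL_{n+1}$. Once the induced surjection is computed in trivialized coordinates to be $u\mapsto e_1^*Mu=\sum_i x_iu_i$, the identification $E\cong P_n$ is automatic, since both are the kernel of the same surjection from the same trivial bundle; no local isomorphism check is needed. That is what your route buys; what the paper's route buys is an explicit formula for the isomorphism of total spaces, which is also what makes its equivariance verification elementary. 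Your closing caution about the $M$ versus $M^{-1}$ ambiguity (i.e.\ $P_n$ versus its dual) is exactly the right thing to worry about, and your conventions (left $SL_n$-action, $x$ equal to the first row of $M$, $y$ equal to the first column of $M^{-1}$) agree with the paper's, so the computation does land on $(x_1,\ldots,x_{n+1})$.
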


\begin{proof}
Let $V$ be the total space of $P_n$ over $Q_{2n+1}$. Explicitly, we have
\[
V=\Spec k[x_1,\ldots,x_{n+1},y_1,\ldots,y_{n+1},z_1,\ldots,z_{n+1}]/(\sum_{i=1}^{n+1} x_iy_i-1, \sum_{i=1}^{n+1} x_iz_i).
\]
Let
\[
f:\A^n\times SL_{n+1}\to V
\]
be defined by $(v,M)\mapsto (M^te_1, M^{-1}e_1, M^{-1}v^\prime)$ where $v^\prime=(0,v)$. Now $SL_n$ acts on $\A^n\times SL_{n+1}$ by $G\cdot (v,M)=(Gv,GM)$ and we observe that $f$ is $SL_n$-equivariant if $V$ is endowed with the trivial action. It follows that $f$ induces a morphism
\[
f:\A^n\times^{SL_n} SL_{n+1}\to V
\]
where the left-hand term is the quotient of $\A^n\times SL_{n+1}$ by the above action of $SL_n$, and is therefore the vector bundle associated to the torsor $SL_{n+1}\to SL_{n+1}/SL_n$. Moreover, we have a commutative diagram
\[
\xymatrix{\A^n\times^{SL_n} SL_{n+1}\ar[r]^-f\ar[d] & V\ar[d] \\
SL_{n+1}/SL_n\ar[r] & Q_{2n+1}}
\]
where the vertical maps are the projections and the bottom horizontal map is the isomorphism induced by $M\mapsto (M^te_1, M^{-1}e_1)$. It is easy to check that $f$ is an isomorphism locally on $Q_{2n+1}$.
\end{proof}

We now return to our identification of the class in $H^n_{\Nis}(Q_{2n+1},\K^{\MW}_n)$ determined by the morphism $q_{n-1} \circ \delta_{n}$. Recall from \cite[\S 4]{AsokFaselEulerComparison} that the fiber sequence
\[
SL_n/SL_{n-1}\longrightarrow BSL_{n-1}\longrightarrow BSL_{n}
\]
yields a canonical class in $H^n_{\Nis}(BSL_{n},\K^{\MW}_n)$ actually induced by the connecting homomorphism $\bpi_n^{\aone}(BSL_{n})\to \bpi_n^{\aone}(SL_n/SL_{n-1})=\K_n^{\MW}$. The morphism $q_{n-1} \circ \delta_{n}$ is then the pull-back in $H^n_{\Nis}(Q_{2n+1},\K^{\MW}_n)$ of this fundamental class along the morphism $Q_{2n+1}\to BSL_n$. By definition this is the Euler class of $E_n$ considered by Morel in \cite[Theorem 7.14]{MField}.

\begin{rem}
\label{rem:comparisonofEulerclasses}
In the next result, we will implicitly identify two notions of Euler class: the one studied by Morel discussed above, and one studied by the second author in \cite[\S 13]{FaselChowWitt}.  That these two classes coincide can be checked by the method of the universal example; a detailed treatment of this comparison appears in \cite{AsokFaselEulerComparison}.  In brief, the Euler class of a vector bundle can be viewed as being pulled back from a ``universal" Euler class arising from the universal rank $n$ vector bundle over the Grassmannian.  Because both Euler classes under consideration are suitably functorial, it suffices therefore to check that they coincide in the universal case, which can be accomplished, after unwinding the definition of Morel's Euler class as a suitable $k$-invariant in a Postnikov tower, by an explicit computation.
\end{rem}

We now proceed to the computation of $H^{n}_{\Nis}(Q_{2n+1},\K^{\MW}_{n})$. Observe first that the $\aone$-weak equivalence $Q_{2n+1}\simeq SL_{n+1}/SL_{n} \sim {\mathbb A}^{n+1} \setminus 0$ also gives an identification $Q_{2n+1} \cong \Sigma^{n}_s \gm^{\wedge n+1}$.  By means of the suspension isomorphism, the group $H^{n}_{\Nis}(Q_{2n+1},\K^{\MW}_{n})$ is then canonically isomorphic to $H^{0}_{\Nis}(\gm^{\wedge n+1},\K^{\MW}_{n})$.  The group on the right hand side can be described by combining Lemma \ref{lem:cohomology} and Proposition \ref{prop:contractionsofMW}: one obtains an identification $\K^{\MW}_{-1}(k)\cong H^{n}_{\Nis}(Q_{2n+1},\K^{\MW}_{n})$.  By \cite[Lemma 3.10]{MField}, $\K^{\MW}_{-1}(k) \cong W(k)$, and every element of this group is of the form $\eta s$ for $s\in GW(k)$.  Using this notation, we have the following description of the composite map $q_{n-1} \circ \delta_n$.

\begin{lem}
\label{lem:eulerclasscomputation}
We have
\[
q_{n-1}\circ \delta_n=\begin{cases}
\eta & \text{if $n=2m$.} \\
0 & \text{if $n=2m+1$.}\end{cases}
\]
\end{lem}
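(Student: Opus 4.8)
The composite $q_{n-1}\circ\delta_n$ has already been identified with the Euler class of the rank-$n$ bundle $E_n \cong P_n$ of Lemma \ref{lem:universaltorsor}, viewed inside $H^n_{\Nis}(Q_{2n+1},\K^{\MW}_n)$, and this group has been identified with $\K^{\MW}_{-1}(k)\cong W(k)$; moreover, via Remark \ref{rem:comparisonofEulerclasses} this Morel Euler class agrees with the Chow--Witt Euler class, so I may compute it as the class $[Z(s)]$ of the vanishing locus of any global section $s$ of $P_n$. Concretely, $P_n$ is the orthogonal complement $\{z\in\O^{n+1}:\sum_i x_i z_i=0\}$ of the unimodular row $(x_1,\dots,x_{n+1})$ on $Q_{2n+1}$, and the exact sequence of Lemma \ref{lem:universaltorsor} shows $\det P_n\cong\O$, so the Chow--Witt Euler class is defined with $\K^{\MW}_n$-coefficients. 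The strategy is to exhibit a single explicit ``symplectic'' section and read off its vanishing, the two parities of $n$ being distinguished exactly by whether all $n+1$ coordinates can be paired.

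First I would treat the odd case $n=2m+1$. Here $n+1$ is even, and I set $s=(x_2,-x_1,x_4,-x_3,\dots,x_{n+1},-x_n)$. A direct check gives $\sum_i x_i s_i=0$, so $s$ is a section of $P_n$, and $s$ vanishes only where every $x_i$ vanishes; but $\sum_i x_i y_i=1$ on $Q_{2n+1}$ forces $x\neq 0$, so $s$ is nowhere vanishing. A rank-$n$ bundle admitting a nowhere-vanishing section has trivial Euler class, whence $q_{n-1}\circ\delta_n=0$.

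For the even case $n=2m$ the last coordinate cannot be paired, and the best one can do is $s=(x_2,-x_1,\dots,x_n,-x_{n-1},0)$, which still satisfies $\sum_i x_i s_i=0$. Its vanishing locus is the reduced complete intersection $Z(s)=V(x_1,\dots,x_n)\cap Q_{2n+1}$, cut out by the regular sequence $x_1,\dots,x_n$; since this has the expected codimension $n$, the section is regular and the Euler class equals the Chow--Witt fundamental class $[Z(s)]\in \widetilde{CH}^n(Q_{2n+1})$. Under the $\aone$-weak equivalence $Q_{2n+1}\sim\A^{n+1}\setminus 0$ this locus restricts to $\{x_1=\cdots=x_n=0,\ x_{n+1}\neq 0\}\cong\gm$, the standard coordinate generator. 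It then remains to trace this cycle, together with the orientation of its normal bundle coming from $s$, through the suspension isomorphism and the identification $H^0_{\Nis}(\gm^{\wedge n+1},\K^{\MW}_n)\cong\K^{\MW}_{-1}(k)\cong W(k)$ of Lemma \ref{lem:cohomology} and Proposition \ref{prop:contractionsofMW}, and to check that it represents $\eta$.

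I expect this final bookkeeping in the even case to be the main obstacle: the content is not that $Z(s)$ is nonempty but that its oriented fundamental class is the distinguished generator $\eta$ rather than $0$ or a twist $\eta\langle a\rangle$. This requires computing the Grothendieck--Witt orientation datum attached to the transverse vanishing of $s$ along $V(x_1,\dots,x_n)$ and comparing it with the generator fixed by the contraction isomorphisms; equivalently, one must verify that the local trivialization of $\det P_n$ induced by $s$ matches the canonical one under the suspension identification.
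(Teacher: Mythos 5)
Your odd case is complete and is essentially the paper's own argument: the paper disposes of $n=2m+1$ by noting that a stably free module given by a unimodular row of even length splits off a free rank-one factor, and the standard proof of that fact is exactly your nowhere-vanishing symplectic section $(x_2,-x_1,\ldots,x_{n+1},-x_n)$. The identification of $q_{n-1}\circ\delta_n$ with the Euler class of $P_n$, via the discussion preceding Remark \ref{rem:comparisonofEulerclasses}, is likewise shared by both arguments.

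The genuine gap is the even case, which is the substantive half of the lemma, and you have in effect acknowledged it yourself. Your section $s=(x_2,-x_1,\ldots,x_n,-x_{n-1},0)$ and the identification of the Euler class with the oriented class of $Z(s)=V(x_1,\ldots,x_n)\cap Q_{2n+1}$ set up the right framework, but everything you prove up to that point is consistent with the composite being $0$, or $\eta\langle a\rangle$, or $\eta$: note that \emph{every} element of $H^n_{\Nis}(Q_{2n+1},\K^{\MW}_n)\cong\K^{\MW}_{-1}(k)\cong W(k)$ is of the form $\eta s$ with $s\in GW(k)$, so exhibiting a representing cycle does not by itself even establish nontriviality. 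The entire content of the statement $q_{n-1}\circ\delta_n=\eta$ is the orientation computation you defer as ``final bookkeeping'': one must compute the Grothendieck--Witt class carried by $Z(s)$ (equivalently, the local datum coming from the Koszul resolution of the section) and check it is the unit class, not a twist and not zero. Without this, the dichotomy between the two parities collapses, and the downstream consequence in the paper --- that the image of $q_{2m-1}$ contains all of $\mathbf{I}^{2m+1}=\ker(\K^{\MW}_{2m}\to\K^M_{2m})$, giving the exact sequence $0\to\mathbf{S}_{2m}\to\bpi_{2m-2}^{\aone}(SL_{2m-1})\to\K^Q_{2m-1}\to 0$ --- cannot be drawn. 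The paper closes exactly this point by citing \cite[Proposition 3.2]{Fasel11c}, where the Euler class of $E_n$ for $n$ even is computed; a correct write-up must either carry out that local computation or invoke such a result, and your proposal does neither.
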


\begin{proof}
The composite in which we are interested corresponds to the Euler class of $E_n$ by the discussion just preceding Remark \ref{rem:comparisonofEulerclasses}. If $n=2m+1$, then the Euler class of $E_n$ is trivial since a stably free module given by a unimodular row of even length always has a free factor of rank one and thus a trivial Euler class. In case $n=2m$, keeping in mind Remark \ref{rem:comparisonofEulerclasses}, the Euler class is computed in \cite[Proposition 3.2]{Fasel11c}.
\end{proof}

\subsubsection*{The sheaf $\mathbf{S}_n$}
Recall that we have a surjective morphism of sheaves $\K_n^{MW}\to \K_n^M$ for any $n\in\Z$ whose kernel is the principal ideal generated by $\eta$. Using Diagram (\ref{equ:useful}), we get a diagram for $n\geq 3$
\begin{equation}\label{eqn:main}
\xymatrix{\K_{n+1}^{MW}\ar[d]^-{\delta_n} & & &  \\
\bpi_{n-1}(SL_n)\ar[r]_-{q_{n-1}}\ar[d] & \K_n^{MW}\ar[r]_-{\delta_{n-1}}\ar[d] & \bpi_{n-2}(SL_{n-1})\ar[r] & \K_{n-1}^Q\ar[r] & 0 \\
\K_n^Q\ar[d]\ar@{-->}[r] & \K_n^M& & \\
0 & & & }
\end{equation}
Observe moreover that Lemma \ref{lem:eulerclasscomputation} yields an induced morphism $\psi_n:\K_n^Q\to \K_n^M$ for any $n\geq 2$.

\begin{defn}
\label{defn:sn}
For any integer $n\geq 2$, set $\mathbf{S}_n := \operatorname{coker}(\psi_n)$.
\end{defn}

Our next goal will be to show that $\mathbf{S}_n$ is a sheaf of torsion abelian groups and to establish a precise bound on the order of torsion.  To this end, we have to reinterpret the morphism $\psi_n$ slightly.

\subsubsection*{Suslin matrices and a torsion bound}
Let $R$ be a ring. For any integer $m\in\N$ and any row $v=(v_1,\ldots,v_m)\in R^{m}$ we set $v^\prime:=(v_2,\ldots,v_m)\in R^{m-1}$. For any $n\in\N$ and $a=(a_1,\ldots,a_n)$, $b=(b_1,\ldots,b_n)$, we inductively define
matrices $S_n(a,b)$ by $S_1(a_1,b_1)=a_1$ and
\[
S_n(a,b)=\begin{pmatrix} a_1Id_{2^{n-2}} & S_{n-1}(a^\prime,b^\prime) \\ -S_{n-1}(b^\prime,a^\prime)^t & b_1Id_{2^{n-2}}\end{pmatrix}.
\]
If $a$ and $b$ are such that $\sum a_ib_i=1$ then $S_n(a,b)\in GL_{2^{n-1}}(R)$ by \cite[Lemma 5.1]{SuslinStablyFree}. This construction is functorial in $R$ and therefore we obtain a morphism
\[
\phi_n:Q_{2n-1} \longrightarrow GL_{2^{n-1}}.
\]
Composing with the morphism $GL_{2^{n-1}}\to GL$ we obtain a morphism of spaces $Q_{2n-1}\to GL$, which is pointed if we choose $(1,0,\ldots,0,1,0,\ldots,0)$ as base point of $Q_{2n-1}$. Recall that the projection $Q_{2n-1}\to \A^n\setminus 0$ defined by $(x_1,\ldots,x_n,y_1,\ldots,y_n)\to (x_1,\ldots,x_n)$ is an $\A^1$-weak equivalence. As a consequence, $\piaone_{n-1}(Q_{2n-1})\simeq \piaone_{n-1}(\A^n\setminus 0)=\K_n^{MW}$. It follows that $\phi_n$ induces a morphism
\[
\Phi_n:\K_n^{MW} \longrightarrow \K_n^Q.
\]

\begin{lem}\label{lem:sheafifiedmu}
There exists a morphism of strictly $\aone$-invariant sheaves $\mu_n: \K^M_n \to \K^Q_n$ whose sections over finitely generated extensions $L/k$ coincide with the natural homomorphism from Milnor K-theory to Quillen K-theory.  Moreover, there is a canonical identification $(\mu_n)_{-j} = \mu_{n-j}$.
\end{lem}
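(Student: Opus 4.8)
The plan is to construct $\mu_n$ as the degree-$n$ piece of a morphism of graded sheaves of rings and then to read off the contraction formula from multiplicativity together with the fact that the contraction isomorphisms of Lemma~\ref{lem:contractionsofquillenandmilnor} are themselves residue maps. For existence I would work inside Morel's formalism of strictly $\aone$-invariant (equivalently unramified) sheaves: such a sheaf is determined by its sections over finitely generated extensions $L/k$ together with the residue maps of the associated Gersten complex, and a morphism between two such sheaves is the same datum as a natural transformation on these field-level sections that commutes with residues. Suslin's natural homomorphism $K^M_n(L)\to K_n(L)$, sending a symbol $\{u_1,\dots,u_n\}$ to the product $[u_1]\cdots[u_n]$ in the graded ring $K_*(L)$, is exactly such a residue-compatible natural transformation (this compatibility with the boundary maps in the respective Gersten complexes is classical). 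It therefore glues to a morphism of strictly $\aone$-invariant sheaves $\mu_n\colon \K^M_n\to\K^Q_n$ whose sections over $L/k$ are the stated homomorphism; by construction $\mu_\ast$ is a map of graded sheaves of rings with $\mu_1=\mathrm{id}_{\gm}$ (using $\K^M_1=\gm=\K^Q_1$). One could alternatively produce $\mu_\ast$ directly from the graded ring structure on $\K^Q_\ast$ by checking that the Steinberg relation $\{a,1-a\}=0$, which holds in Quillen K-theory over any field, forces the symbol map $\gm^{\otimes\ast}\to\K^Q_\ast$ to factor through $\K^M_\ast$ (for $k$ infinite this presentation of $\K^M_\ast$ by symbols holds on local sections by Kerz).

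To prove $(\mu_n)_{-1}=\mu_{n-1}$, I would first recall that the contraction isomorphisms $(\K^M_n)_{-1}\isomto\K^M_{n-1}$ and $(\K^Q_n)_{-1}\isomto\K^Q_{n-1}$ of Lemma~\ref{lem:contractionsofquillenandmilnor} are, exactly as in the proof of Proposition~\ref{prop:contractionsofMW} via d\'evissage in the Gersten complex, realized by the residue (boundary) map attached to the coordinate $t$ on $\gm$; concretely, the contracted subsheaf of sections over $U\times\gm$ is the image of multiplication by the coordinate symbol $[t]\in\K^M_1(\gm)=\K^Q_1(\gm)$, and the isomorphism strips off this factor. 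Since contraction is an exact functor, $(\mu_n)_{-1}$ is the map induced functorially by $\mu_n$, so it suffices to verify that the square with vertical arrows the residue isomorphisms and horizontal arrows $\mu_n$ and $\mu_{n-1}$ commutes. Because $\mu_\ast$ is multiplicative with $\mu_1=\mathrm{id}$, we have $\mu_n([t]\cdot\alpha)=[t]\cdot\mu_{n-1}(\alpha)$, which is precisely the assertion that $\mu$ intertwines the two residue maps; under the identifications this says $(\mu_n)_{-1}=\mu_{n-1}$.

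The general identity $(\mu_n)_{-j}=\mu_{n-j}$ then follows by induction on $j$, applying the case $j=1$ to $\mu_{n-j+1}$ and using $(\mathbf{A})_{-j}=((\mathbf{A})_{-j+1})_{-1}$. When $j>n$ both sides vanish by Lemma~\ref{lem:contractionsofquillenandmilnor}, consistently with the conventions $\mu_0=\mathrm{id}_{\Z}$ and $\mu_m=0$ for $m<0$.

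The main obstacle is the single compatibility at the center of the argument: that $\mu$ commutes with the residue maps, equivalently that it respects multiplication by the coordinate symbol $[t]$. Everything else is either formal (exactness and functoriality of contraction, the induction) or citable (the graded ring structure on $\K^Q_\ast$, strict $\aone$-invariance of both sheaves, and the validity of the Steinberg relation in Quillen K-theory). This compatibility is standard once $\mu$ is known to be a homomorphism of graded rings respecting the degree-one identification, but invoking it requires pinning down that the contraction isomorphisms of Lemma~\ref{lem:contractionsofquillenandmilnor} are the residue maps rather than merely abstract isomorphisms; since that is exactly how those isomorphisms are produced, the two descriptions agree and the argument closes.
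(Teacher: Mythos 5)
Your proposal is correct and in substance coincides with the paper's proof: the paper likewise reduces the lemma to field-level data plus residue compatibility, citing Rost's cycle-module formalism (the natural transformation $K^M_*(\cdot)\to K^Q_*(\cdot)$ is a morphism of cycle modules, whose associated unramified sheaves are identified with $\K^M_n$ and $\K^Q_n$ using the fact that cycle-module sheaves are Nisnevich sheaves), where you phrase the very same reduction in Morel's unramified-sheaf language and invoke the classical residue compatibility directly. Your contraction argument via multiplicativity over the coordinate symbol $[t]$ is also the paper's, which deduces $(\mu_n)_{-j}=\mu_{n-j}$ by unwinding contractions and using the compatible $K^M_*$-action that every cycle module carries.
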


\begin{proof}
This fact is, more or less, contained in \cite[Remark 5.4]{Rost96}.  More precisely, view $K^M_*(\cdot)$ and $K^Q_*(\cdot)$ as functors on the category of finitely generated extensions $L$ of our base field $k$.  Rost observes \cite[Theorem 1.4, Remark 2.4-2.5]{Rost96} that $K^M_*(\cdot)$ and $K^Q_*(\cdot)$ give rise to cycle modules (he uses the notation $K_*(\cdot)$ for the former and $K'_*(\cdot)$ for the latter). By \cite[Remark 5.4]{Rost96}, the natural homomorphism $K^M_*(\cdot) \to K^Q_*(\cdot)$ yields a morphism of cycle modules.  Now, associated with any cycle module is a family, indexed by the integers, of ``unramified" sheaves (see \cite[Remark 5.2,Corollary 6.5]{Rost96}).  While these ``unramified" sheaves are {\em a priori} Zariski sheaves, one knows that they are in fact already sheaves for the Nisnevich topology \cite[Theorem 8.3.1]{CTHK}, and therefore, the unramified sheaves corresponding to the cycle modules $K^M_*(\cdot)$ and $K^Q_*(\cdot)$ are precisely the sheaves we have been calling $\K^M_i$ and $\K^Q_i$.

The second statement follows by unwinding the definitions of contraction and observing that any cycle module comes equipped with a compatible action of the cycle module $K^M_*(\cdot)$.
\end{proof}

The relation between $\Phi_n$ and $\mu_n$ is summarized in the following result.

\begin{lem}\label{lem:identificationmu}
The diagram
\[
\xymatrix{\K_n^{MW}\ar[r]^-{\Phi_n}\ar[d] & \K_n^Q\ar@{=}[d] \\
\K_n^M\ar[r]_-{\mu_n} & \K_n^Q,}
\]
where the left vertical morphism is the canonical epimorphism $\K^{MW}_n \to \K^M_n$, commutes up to a sign.
\end{lem}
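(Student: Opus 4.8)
The plan is to reduce the asserted identity to a statement about sections over finitely generated field extensions, and there to check it on the symbol generators of Milnor--Witt K-theory, where it becomes the explicit $K$-theoretic computation of Suslin.

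First I would reduce to fields. Writing $\pi\colon \K^{MW}_n \to \K^M_n$ for the canonical epimorphism, both $\Phi_n$ and $\mu_n\circ\pi$ are morphisms of strictly $\aone$-invariant sheaves of abelian groups (for $n\geq 2$), hence of unramified sheaves in the sense of Morel \cite{MField}; the same is true of the target $\K^Q_n$ by Lemma \ref{lem:sheafifiedmu}. For any smooth irreducible $X$ with function field $k(X)$, an unramified sheaf $\mathbf{B}$ satisfies $\mathbf{B}(X)\hookrightarrow \mathbf{B}(k(X))$. Thus, if there is a sign $\epsilon_n$ depending only on $n$ for which the difference $\Phi_n - \epsilon_n\,\mu_n\circ\pi$ vanishes on the sections over every finitely generated extension $L/k$, then by naturality of restriction together with the injectivity $\K^Q_n(X)\hookrightarrow \K^Q_n(k(X))$ this difference vanishes on every $X$, i.e. the diagram commutes up to the sign $\epsilon_n$. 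It therefore suffices to compare the two homomorphisms $K^{MW}_n(L)\to K_n(L)$.

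Next, over a fixed $L$, I would reduce the comparison to symbols. For $n\geq 1$ the group $K^{MW}_n(L)$ is generated by the length-$n$ symbols $[u_1]\cdots[u_n]$: the ring $K^{MW}_*(L)$ is generated by the $[u]$ and by $\eta$, and the relation $[uv]=[u]+[v]+\eta[u][v]$ gives $\eta[u][v]=[uv]-[u]-[v]$, so every monomial $\eta^{j}[u_1]\cdots[u_{n+j}]$ rewrites inductively as a $\Z$-linear combination of length-$n$ symbols. Since $\Phi_n$ and $\mu_n\circ\pi$ are additive, it is enough to compare them on a single symbol $[u_1]\cdots[u_n]$. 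On such an element $\pi$ returns the Milnor symbol $\{u_1,\ldots,u_n\}$, so $\mu_n\circ\pi([u_1]\cdots[u_n])=\mu_n(\{u_1,\ldots,u_n\})$, the image of the Milnor symbol in $K_n(L)$.

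It then remains to compute $\Phi_n([u_1]\cdots[u_n])$. Under Morel's identification $\piaone_{n-1}(\A^n\setminus 0)\cong \K^{MW}_n$, the symbol $[u_1]\cdots[u_n]$ is represented by an explicit $\aone$-homotopy class $S^{n-1}_s\wedge\gm^{\wedge n}\to \A^n\setminus 0\simeq Q_{2n-1}$ built from $(u_1,\ldots,u_n)$; composing with $\phi_n$ and passing to $\piaone_{n-1}(GL)=\K^Q_n$ amounts to evaluating the class of the Suslin matrix $S_n(a,b)$ in Quillen $K$-theory. The substance of the lemma is that this class equals $\pm\mu_n(\{u_1,\ldots,u_n\})$ with a sign independent of the $u_i$; this is exactly the computation carried out by Suslin in \cite{Suslin82b, SuslinStablyFree}, which exploits the block-recursive definition of $S_n$. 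Combined with the previous paragraph this gives $\Phi_n=\epsilon_n\,\mu_n\circ\pi$ on symbols, hence on all of $K^{MW}_n(L)$, and hence on $\K^{MW}_n$ by the reduction to fields. The main obstacle is precisely this final step: unwinding Morel's identification of $\piaone_{n-1}(\A^n\setminus 0)$ with $\K^{MW}_n$ far enough to recognize the class of $S_n(a,b)$ as the image of a Milnor symbol, and pinning down the uniform sign $\epsilon_n$. The reduction to fields and the passage to symbol generators are formal; matching the geometric Suslin-matrix class with $\mu_n$ is where Suslin's explicit $K$-theoretic computation does the real work.
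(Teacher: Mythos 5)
Your two formal reductions---to sections over finitely generated extensions $L/k$, and then to length-$n$ symbols $[u_1]\cdots[u_n]$ generating $K^{MW}_n(L)$---are correct, and indeed the paper invokes the same principles elsewhere (e.g.\ in the proof of Lemma \ref{lem:commutative}). The gap is in the step you yourself identify as carrying all the weight. The assertion that $\Phi_n([u_1]\cdots[u_n])=\pm\,\mu_n(\{u_1,\ldots,u_n\})$ ``is exactly the computation carried out by Suslin'' is not accurate: what Suslin proves, and what the paper cites as \cite[Theorem 2.3]{Suslin82b}, is that the class of the universal Suslin matrix generates $\widetilde K_1(Q_{2n-1})\cong\Z$. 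That is a statement about $K_1$ of a single ring; it makes no contact with Morel's identification $\bpi_{n-1}^{\aone}(\A^n\setminus 0)\cong\K^{MW}_n$, with Milnor--Witt symbols, or with the induced map on $\bpi_{n-1}^{\aone}$ evaluated over extension fields. Likewise \cite{SuslinStablyFree} supplies only the definition and invertibility of $S_n(a,b)$ and the first-row computation (which the paper uses for the \emph{different} triangle in Lemma \ref{lem:commutative}). The class in $K_n(L)$ of the composite $S^{n-1}_s\wedge(\Spec L)_+\to S^{n-1}_s\wedge\gm^{\wedge n}\simeq Q_{2n-1}\xrightarrow{\phi_n}GL$ attached to a symbol cannot even be formulated without the motivic machinery, so it cannot be found in the 1977/1982 references; your proposal therefore defers precisely the content of the lemma to citations that do not contain it.

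What the paper does instead explains both how to avoid that computation and why the statement is only ``up to sign.'' One builds a second map $\nu_n\colon(\pone)^{\wedge n}\to BGL$ from $\nu_1$ (classifying $\O(1)$) and the H-space structure of $\Z\times BGL$, and checks by a line-bundle argument that $\nu_n$ generates $[(\pone)^{\wedge n},BGL]\cong\Z$. Since the adjoint of $\phi_n$ generates the \emph{same} group $[Q_{2n-1},\Omega^1_sBGL]=[(\pone)^{\wedge n},BGL]$ (this is where Suslin's theorem enters, and it is \emph{all} that is needed from it), the two classes agree up to sign. The symbol evaluation is then performed for $\nu_n$, where it is easy: the composite $\gm^{\wedge n}\to\K^{MW}_n\to\K^Q_n$ sends $(a_1,\ldots,a_n)$ to the cup product $(a_1)\cup\cdots\cup(a_n)$, and comparing the relations $[ab]=[a]+[b]+\eta[a,b]$ and $(ab)=(a)+(b)$ shows this factors through $\K^M_n$, i.e.\ equals $\mu_n$ composed with the canonical epimorphism. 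To repair your outline you must either insert this generator-comparison step, or genuinely carry out the direct evaluation of the Suslin-matrix class on symbols---a substantially harder computation, which, note, would prove more than the lemma claims, since it would determine the sign exactly.
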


\begin{proof}
For any $n\in\N$, consider the map
\[
\gm \longrightarrow GL_n
\]
defined by $\alpha\mapsto \mathrm{diag}(\alpha,1,\ldots,1)$. Stabilizing, we get a pointed map $\gm\to GL$. Composing with the natural map $GL\to \Omega_s^1BGL$, which is a weak equivalence (see, e.g., \cite[p. 123]{MV}), we obtain a map $\gm\to \Omega_s^1BGL$. By adjunction, this yields a pointed map $\nu_1:\pone\to BGL$. Now $[\pone,BGL]=Pic(\pone)=\Z$ as a consequence of \cite[\S 4 Theorem 3.13]{MV} and it is not hard to see that $\nu_1$ corresponds to $\O(1)$ under this identification.

Using the $H$-space structure on $\Z\times BGL$, we get a map
\[
\xymatrix@C=3.5em{\nu_n:{\pone}^{\wedge n}\ar[r]^-{\nu_1\wedge\ldots\wedge\nu_1} & BGL^{\wedge n} \ar[r] & BGL};
\]
the element of $[(\pone)^{\wedge n},BGL]=\Z$ so obtained corresponds to the line bundle $\O(1)\otimes\ldots\otimes \O(1)$ on ${\mathbb P}^n$ under the natural map $\mathbb P^n\to \mathbb P^n/\mathbb P^{n-1}\simeq (\pone)^{\wedge n}$. It follows that $\nu_n$ is a generator of $[(\pone)^{\wedge n},BGL]=\Z$. On the other hand, $\Phi_n$ is given by the map
\[
\phi_n:Q_{2n-1} \longrightarrow GL
\]
whose class generates $[Q_{2n-1},GL]=\widetilde K_1(Q_{2n-1})=\Z$ by \cite[Theorem 2.3]{Suslin82b}.  Thus, the composite of $\phi_n$ with $GL\to \Omega_s^1BGL$ generates the group $[Q_{2n-1},\Omega_s^1BGL]=[(\pone)^{\wedge n},BGL]$. It coincides therefore with $\nu_n$ up to sign. We are thus left to show that $\nu_n$ induces the map $\K_n^{MW}\to \K_n^m\stackrel{\mu_n}\to \K_n^Q$.

By \cite[Theorems 3.37 and 6.40]{MField} there is, for every integer $n \geq 1$, a canonical ``symbol" morphism of sheaves
\[
\gm^{\wedge n} \longrightarrow \K^{MW}_n \cong \bpi_n^{\aone}({\pone}^{\wedge n});
\]
the induced map on sections over a finitely generated extension $L/k$ assigns the symbol $[a_1,\ldots,a_n]$ in $\K^{MW}_n(L)$ to a section $(a_1,\ldots,a_n)$ of $\gm^{\wedge n}(L)$. The map $\nu_n$ induces a morphism
\[
\bpi_n^{\aone}({\pone}^{\wedge n}) \longrightarrow \bpi_n^{\aone}(BGL) \cong \K^Q_n
\]
and we thus obtain a composite morphism
\[
\gm^{\wedge n} \longrightarrow \K^{MW}_n \longrightarrow \K^Q_n
\]
that assigns to a section $(a_1,\ldots,a_n)$ of $\gm^{\wedge n}(L)$ the cup product $(a_1) \cup \ldots \cup (a_n)$ in $K^Q_n(L)$. The morphism $\K_n^{MW}\to \K_n^Q$ induced by $\nu_n$ then associates the cup product $(a_1) \cup \ldots \cup (a_n)$ in $K^Q_n(L)$ to the symbol $[a_1,\ldots,a_n]\in K_n^{MW}(L)$. Using the relation $[ab]=[a]+[b]+\eta[a,b]$ for any $a,b\in L^\times$ and $(ab)=(a)+(b)$ it is easy to see that this homomorphism factors through $K_n^M(L)$ and the result follows.
\end{proof}

\begin{cor}\label{contraction}
The morphism $(\Phi_n)_{-1}:\K_{n-1}^{MW}\to \K_{n-1}^Q$ coincides with $\Phi_{n-1}$.
\end{cor}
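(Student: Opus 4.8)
The plan is to deduce the identity from the already-established contraction behaviour of the comparison map $\mu_\bullet$ from Milnor to Quillen K-theory, using the identification of $\Phi_n$ with $\mu_n$ provided by Lemma \ref{lem:identificationmu}. That lemma gives, up to a sign, a factorization $\Phi_n = \mu_n\circ p_n$, where $p_n\colon \K_n^{MW}\to\K_n^M$ denotes the canonical epimorphism appearing in Proposition \ref{prop:milnorwittexactsequences}. Since the contraction functor $(-)_{-1}$ is additive and exact, contracting this factorization reduces the problem to contracting the two factors separately.

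I would then handle each factor. For $\mu_n$, Lemma \ref{lem:sheafifiedmu} already records $(\mu_n)_{-1} = \mu_{n-1}$ on the nose. For $p_n$, I would contract the whole Cartesian square (\ref{eqn:Milnor-Witt}) in degree $n$: exactness of contraction together with the identifications $(\K_n^{MW})_{-1}\cong\K_{n-1}^{MW}$ and $({\mathbf I}^{n+1})_{-1}\cong{\mathbf I}^{n}$ of Proposition \ref{prop:contractionsofMW}, and $(\K_n^M)_{-1}\cong\K_{n-1}^M$ of Lemma \ref{lem:contractionsofquillenandmilnor}, produces the analogous square in degree $n-1$, whose top horizontal arrow is precisely $p_{n-1}$; hence $(p_n)_{-1} = p_{n-1}$. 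Combining these, $(\Phi_n)_{-1} = (\mu_n)_{-1}\circ(p_n)_{-1} = \mu_{n-1}\circ p_{n-1}$, and applying Lemma \ref{lem:identificationmu} once more, now in degree $n-1$, identifies the right-hand side with $\Phi_{n-1}$.

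The only genuinely delicate point I anticipate is the sign. Because Lemma \ref{lem:identificationmu} identifies $\Phi_n$ and $\mu_n\circ p_n$ only up to sign, the computation above literally gives $(\Phi_n)_{-1} = \pm\,\Phi_{n-1}$. For the intended applications this is harmless, since the sheaf ${\mathbf S}_n = \coker(\psi_n)$ depends only on the image of the relevant map and is therefore insensitive to an overall sign. If one insists on the exact equality, I would instead argue geometrically from the recursion defining the Suslin matrices $S_n(a,b)$: restricting $\phi_n$ along the $\gm$-coordinate that the contraction functor singles out expresses it, via that block recursion, directly in terms of $\phi_{n-1}$, so that the contraction of $\phi_n$ agrees with $\phi_{n-1}$ on the nose and the sign is forced to be $+1$. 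This geometric route avoids the sign ambiguity of Lemma \ref{lem:identificationmu}, but requires carefully matching the canonical identification $(\K_n^{MW})_{-1}\cong\K_{n-1}^{MW}$ with the block structure of $S_n$, which I expect to be the main computational obstacle.
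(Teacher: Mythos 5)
Your proposal follows essentially the same route as the paper's own proof: the paper likewise contracts the commutative diagram of Lemma \ref{lem:identificationmu}, identifies the contracted sheaves via Lemma \ref{lem:contractionsofquillenandmilnor} and Proposition \ref{prop:contractionsofMW}, and identifies the contracted maps with $\mu_{n-1}$ and the canonical projection by compatibility with the residue maps in the Gersten complexes (your alternative justifications---citing the contraction statement in Lemma \ref{lem:sheafifiedmu} for $\mu$ and contracting the Cartesian square (\ref{eqn:Milnor-Witt}) for the projection---amount to the same underlying facts). Your explicit attention to the sign ambiguity inherited from Lemma \ref{lem:identificationmu}, which the paper glosses over, and the observation that it is harmless because $\mathbf{S}_n = \coker(\psi_n)$ is insensitive to an overall sign, is a point of extra care rather than a difference in method.
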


\begin{proof}
Contracting the commutative diagram of the above lemma, we obtain a commutative diagram
\[
\xymatrix{(\K_n^{MW})_{-1}\ar[r]^-{(\Phi_n)_{-1}}\ar[d] & (\K_n^Q)_{-1}\ar@{=}[d] \\
(\K_n^M)_{-1}\ar[r]_-{(\mu_n)_{-1}} & (\K_n^Q)_{-1}.}
\]
By Lemma \ref{lem:contractionsofquillenandmilnor} and Proposition \ref{prop:contractionsofMW}, we see that $(\K_n^{MW})_{-1}=\K_{n-1}^{MW}$, $(\K_n^M)_{-1}=\K_{n-1}^M$ and $(\K_n^Q)_{-1}=\K_{n-1}^Q$. Now the residue maps in the corresponding Gersten complexes commute with either the projection $\K_n^{MW}\to \K_n^M$ or the map $\K_n^M\to \K_n^Q$. This proves the result.
\end{proof}

\begin{lem}\label{lem:commutative}
The following triangle commutes
\[
\xymatrix{\K_n^{MW}\ar[r]^-{\mu_n}\ar[rd]_-{(n-1)!} & \K_n^Q\ar[d]^-{\psi_n} \\
 & \K_n^M}
\]
for any $n\geq 2$.
\end{lem}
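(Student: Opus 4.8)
The plan is to reduce the stated identity of sheaf morphisms to a classical identity over fields and then to invoke Suslin's factorial computation. First, note that by Lemma \ref{lem:identificationmu} the horizontal arrow $\K^{MW}_n \to \K^Q_n$ equals, up to sign, the composite $\mu_n \circ \pi$ of the canonical projection $\pi \colon \K^{MW}_n \to \K^M_n$ with $\mu_n$, while the diagonal labelled $(n-1)!$ is $(n-1)!\cdot \pi$. Since $\pi$ is an epimorphism, commutativity of the triangle is equivalent to the assertion that $\psi_n \circ \mu_n = \pm (n-1)!\cdot\mathrm{id}$ as an endomorphism of $\K^M_n$; the sign is immaterial, since only the image of $\psi_n$, hence $\mathbf{S}_n = \coker(\psi_n)$ (Definition \ref{defn:sn}), is used later, and $\pm(n-1)!$ have the same image. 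I emphasize that one cannot shortcut this by contracting from a single base case: whereas $(\mu_n)_{-1} = \mu_{n-1}$ (Lemma \ref{lem:sheafifiedmu}) and $(\Phi_n)_{-1} = \Phi_{n-1}$ (Corollary \ref{contraction}), the factor $(n-1)!$ genuinely depends on $n$, so $\psi_n$ cannot be compatible with contraction in the same way, and a direct computation is forced.

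I would then reduce to sections over fields. The sheaves $\K^{MW}_n$, $\K^Q_n$, $\K^M_n$ are strictly $\aone$-invariant and unramified, so for $X$ smooth and integral the restriction to the generic point is injective (Gersten injectivity). Consequently any two morphisms among these sheaves that agree on sections over every finitely generated extension $L/k$ agree as morphisms of sheaves. It therefore suffices to prove that, for each such $L$, the homomorphism $\psi_n(L)\circ \mu_n(L)\colon K^M_n(L) \to K^M_n(L)$ is multiplication by $(n-1)!$, up to sign.

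Next I would identify the two maps over $L$ with their classical counterparts. By Lemma \ref{lem:sheafifiedmu}, $\mu_n(L)$ is the natural map from Milnor to Quillen K-theory, carrying a symbol $\{a_1,\ldots,a_n\}$ to the product $(a_1)\cup\cdots\cup(a_n)$. For $\psi_n$, the description extracted from Diagram (\ref{eqn:main}) presents it as the factorization through the projection $\bpi^{\aone}_{n-1}(SL_n) \to \K^Q_n$ of the composite $\pi\circ q_{n-1}$; combined with the identification of $\Phi_n$ with the Suslin matrix map (Lemma \ref{lem:identificationmu}) and the Euler class computation of Lemma \ref{lem:eulerclasscomputation}, this shows that $\psi_n(L)$ is exactly the homomorphism $K^Q_n(L)\to K^M_n(L)$ introduced by Suslin in \cite{Suslin82b}. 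With these identifications in place, the claim $\psi_n(L)\circ\mu_n(L) = \pm(n-1)!$ is precisely Suslin's theorem that the composite of the symbol map with his splitting is multiplication by $(-1)^{n-1}(n-1)!$; as symbols generate $K^M_n(L)$, this settles the field-level identity and hence the lemma.

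The crux, which I expect to be the main obstacle, is the identification of the homotopically defined $\psi_n$ with Suslin's explicit homomorphism over fields. This is not formal: it amounts to matching the connecting map $q_{n-1}$, together with the Euler class of the universal stably free bundle of Lemma \ref{lem:universaltorsor}, against Suslin's Mennicke-symbol construction, and it is exactly here that Lemma \ref{lem:eulerclasscomputation}, Lemma \ref{lem:identificationmu}, and the Euler class comparison recorded in Remark \ref{rem:comparisonofEulerclasses} are brought to bear. Once this identification is secured, the factorial is supplied verbatim by \cite{Suslin82b}, and only the harmless sign remains.
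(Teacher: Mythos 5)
Your structural reductions are sound and in fact match the paper: you reduce to sections over finitely generated extensions $L/k$ using strict $\aone$-invariance, and you correctly observe that the sign ambiguity inherited from Lemma \ref{lem:identificationmu} is harmless because only $\coker(\psi_n)$ is used downstream. The genuine gap is the step you yourself flag as the crux: the assertion that $\psi_n(L)$ coincides with Suslin's homomorphism $K^Q_n(L)\to K^M_n(L)$ from \cite{Suslin82b}. That identification is nowhere proved in the paper (it appears only as a motivational remark in the introduction), it does not follow from the results you cite, and it is genuinely hard. The map $\psi_n$ is defined purely inside $\aone$-homotopy theory, by factoring the composite of $q_{n-1}$ with the projection $\K^{MW}_n\to\K^M_n$ through the stabilization epimorphism $\bpi_{n-1}^{\aone}(SL_n)\to\K^Q_n$; Suslin's homomorphism is defined homologically, via stability for the homology of $GL_n(F)$ and the Hurewicz map for $BGL(F)^+$. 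Lemma \ref{lem:eulerclasscomputation} only guarantees that the factorization defining $\psi_n$ exists; Lemma \ref{lem:identificationmu} identifies the map \emph{into} $\K^Q_n$ induced by Suslin matrices, not any map out of $\K^Q_n$; and Remark \ref{rem:comparisonofEulerclasses} compares two Euler classes. None of these compares $\psi_n$ with a homological construction, so ``matching the connecting map against Suslin's Mennicke-symbol construction'' is not a step you may take for granted --- it is a comparison theorem at least as deep as the lemma you are trying to prove.

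The paper avoids this issue entirely, by a mechanism that is more elementary and worth internalizing. By \cite[\S 5]{SuslinStablyFree}, the Suslin-matrix morphism $\phi_n:Q_{2n-1}\to GL_{2^{n-1}}$ lifts, up to $\aone$-homotopy, to a morphism $\phi'_n:Q_{2n-1}\to GL_n$ (Suslin's matrices $\beta_n(a,b)\in GL_n(R)$ map to $S_n(a,b)$ modulo elementary matrices, which are $\aone$-homotopic to the identity), and the composite of $\phi'_n$ with the first-row map $GL_n\to\A^n\setminus 0$ is the \emph{explicit} morphism $(a,b)\mapsto(a_1,a_2,a_3^2,\ldots,a_n^{n-1})$. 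Since the first-row map induces $q_{n-1}$ on $\bpi_{n-1}^{\aone}$, this computes $\psi_n\circ\Phi_n$ on sections over $L$ as $[a_1,\ldots,a_n]\mapsto\{a_1,a_2,a_3^2,\ldots,a_n^{n-1}\}=(n-1)!\{a_1,\ldots,a_n\}$, and the triangle follows from Lemma \ref{lem:identificationmu} and surjectivity of $\K^{MW}_n\to\K^M_n$. The factorial thus comes out of a one-line symbol identity in Milnor K-theory; the only input from Suslin is the explicit first row of $\beta_n(a,b)$, not his splitting theorem for the Milnor-to-Quillen map. To repair your argument, either prove the comparison between $\psi_n$ and Suslin's homomorphism (a substantial undertaking), or replace that step with this lift-and-project computation.
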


\begin{proof}
We first prove that the morphism $\phi_n:Q_{2n-1}\to GL_{2^{n-1}}$ lifts, up to $\aone$-homotopy, to a morphism $\phi^\prime_n:Q_{2n-1}\to GL_n$. Indeed, given $a,b\in R^n$, there exists a matrix $\beta_n(a,b)\in GL_n(R)$ such that its image in $GL_{2^{n-1}}(R)$ is precisely $S_n(a,b)$ up to multiplication by a matrix in $E_n(R)$ (\cite[\S 5b)]{SuslinStablyFree}).  Such a matrix is $\aone$-homotopic to the identity since it can be written as the product of elementary matrices which are evidently $\aone$-homotopic to the identity.

If we write $\Phi'_n$ for the morphism $\K^{MW}_n = \bpi_{n-1}^{\aone}(Q_{2n-1}) \longrightarrow \bpi_{n-1}^{\aone}(GL_n)$ induced by $\phi'_n$, then there is a commutative diagram
\[
\xymatrix{\K_n^{MW}\ar[r]^-{\Phi^\prime_n}\ar[rd]_-{\Phi_n} & \piaone_{n-1}(GL_n)\ar[d] \\
 & \K_n^Q.}
\]

We can now compose the morphism $\phi^\prime_n:Q_{2n-1}\to GL_n$ with the first row map $GL_n\to \A^n\setminus 0$. Using \cite[\S 5]{SuslinStablyFree} once again, we see that this composite maps $(a,b)$ with $a=(a_1,\ldots,a_n)$ and $b=(b_1,\ldots,b_n)$ to $(a_1,a_2,a_3^2,\ldots,a_n^{n-1})$. If $L/k$ is a finitely generated field extension, it follows that the composite
\[
\xymatrix{\K_n^{MW}(L)\ar[r]^-{\Phi^\prime_n} & \piaone_{n-1}(GL_n)(L)\ar[r] & \K_n^{MW}(L)\to \K_n^M(L)}
\]
maps a symbol $[a_1,\ldots,a_n]$ to $\{a_1,a_2,a_3^2,\ldots,a_n^{n-1}\}=(n-1)!\{a_1,\ldots,a_n\}$. Hence the diagram
\begin{equation}
\xymatrix{\K_n^{MW}\ar[r]^-{\Phi_n}\ar[d] & \K_n^Q\ar[d]^-{\psi_n} \\
\K_n^M\ar[r]_-{(n-1)!} & \K_n^M}
\end{equation}
commutes on sections over fields which are finitely generated over $k$. All the sheaves involved are strictly $\A^1$-invariant, and a morphism of strictly $\aone$-invariant sheaves is uniquely determined by its values on sections over finitely generated extensions of the base field.  As a consequence, it follows that the diagram commutes. Using now Lemma \ref{lem:identificationmu} and the fact that $\K_n^{MW}\to \K_n^M$ is surjective we get the required triangle.
\end{proof}

Finally, we can combine the above results to obtain our torsion bound.

\begin{cor}\label{cor:estimate}
The morphism of sheaves $\K_n^M\to \mathbf{S}_n$ induces an epimorphism $\K_n^M/(n-1)!\to \mathbf{S}_n$; for any $m \geq n-2$, this epimorphism becomes an isomorphism after $m$-fold contraction.
\end{cor}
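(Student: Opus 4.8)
The plan is to derive both assertions from the factorization $\psi_n\circ\mu_n = (n-1)!$ of Lemma \ref{lem:commutative}, combined with the exactness of the contraction functor and the computations of contracted sheaves already in hand.

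For the epimorphism, recall from Definition \ref{defn:sn} that $\mathbf{S}_n = \operatorname{coker}(\psi_n\colon \K_n^Q\to\K_n^M)$, so the morphism $\K_n^M\to\mathbf{S}_n$ is the quotient projection and its kernel is $\operatorname{Im}(\psi_n)$. Since $(n-1)! = \psi_n\circ\mu_n$, I would observe that $(n-1)!\,\K_n^M = \psi_n(\mu_n(\K_n^M))\subseteq\operatorname{Im}(\psi_n)$; hence the projection annihilates $(n-1)!\,\K_n^M$ and factors through $\K_n^M/(n-1)!$. Surjectivity of the factored map $\K_n^M/(n-1)!\to\mathbf{S}_n$ is inherited from the cokernel projection.

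For the statement about contractions, I would first reduce to the single case $m = n-2$: contraction is a functor, so it preserves isomorphisms, and since $(\cdot)_{-m}$ agrees with iterating $(\cdot)_{-1}$, an isomorphism after $(n-2)$-fold contraction stays an isomorphism after any $m$-fold contraction with $m\geq n-2$. To handle $m=n-2$, apply the exact functor $(\cdot)_{-(n-2)}$ to the presentation $\K_n^Q\xrightarrow{\psi_n}\K_n^M\to\mathbf{S}_n\to 0$. Using Lemma \ref{lem:contractionsofquillenandmilnor} to identify $(\K_n^Q)_{-(n-2)}\cong\K_2^Q$ and $(\K_n^M)_{-(n-2)}\cong\K_2^M$, this produces a presentation $\K_2^Q\xrightarrow{(\psi_n)_{-(n-2)}}\K_2^M\to(\mathbf{S}_n)_{-(n-2)}\to 0$ and shows that the contracted comparison map $\K_2^M/(n-1)!\to(\mathbf{S}_n)_{-(n-2)}$ is the map induced by the cokernel projection of $(\psi_n)_{-(n-2)}$. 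It thus suffices to prove $\operatorname{Im}((\psi_n)_{-(n-2)}) = (n-1)!\,\K_2^M$.

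The inclusion $\supseteq$ again comes from contracting Lemma \ref{lem:commutative}: since $(\mu_n)_{-(n-2)} = \mu_2$ by Lemma \ref{lem:sheafifiedmu}, one has $(\psi_n)_{-(n-2)}\circ\mu_2 = (n-1)!$ on $\K_2^M$. The hard part is the reverse inclusion, and it rests entirely on the fact that $\mu_2\colon\K_2^M\to\K_2^Q$ is an \emph{isomorphism}: over a finitely generated extension $L/k$ it is the natural homomorphism $K_2^M(L)\to K_2(L)$, an isomorphism by Matsumoto's theorem, and because both sheaves are strictly $\aone$-invariant (indeed the unramified sheaves attached to cycle modules, cf. Lemma \ref{lem:sheafifiedmu}) a morphism inducing isomorphisms on all field sections is an isomorphism of sheaves. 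Granting this, $(\psi_n)_{-(n-2)} = (n-1)!\cdot\mu_2^{-1}$ has image exactly $(n-1)!\,\K_2^M$, whence $(\mathbf{S}_n)_{-(n-2)}\cong\K_2^M/(n-1)!$ and the induced map is an isomorphism. I expect the only genuine subtlety to be this appeal to Matsumoto's theorem together with the verification that the contracted comparison map is the natural quotient; the rest is formal manipulation of exact sequences.
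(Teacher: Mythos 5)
Your proposal is correct and follows essentially the same route as the paper: both arguments rest on the factorization $\psi_n\circ\mu_n=(n-1)!$ from Lemma \ref{lem:commutative}, exactness of contraction, the identification $(\mu_n)_{-j}=\mu_{n-j}$ from Lemma \ref{lem:sheafifiedmu}, and Matsumoto's theorem to conclude that $\mu_2$ is an isomorphism, so that the contracted $\psi_n$ has image exactly $(n-1)!$ times the Milnor K-theory sheaf. The only differences are organizational --- you reduce to the single case $m=n-2$ and contract the presentation of $\mathbf{S}_n$, whereas the paper contracts the triangle $m$ times directly and observes $\mu_{n-m}$ is an isomorphism for all $n-m\leq 2$ --- and these do not change the substance of the argument.
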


\begin{proof}
The first assertion follows immediately from the previous lemma. After $m$-fold contraction, the triangle of Lemma \ref{lem:commutative} becomes
\[
\xymatrix@C=3em{\K_{n-m}^{M}\ar[r]^-{(\mu_n)_{-m}}\ar[rd]_-{(n-1)!} & \K_{n-m}^Q\ar[d]^-{(\psi_n)_{-m}} \\
 & \K_{n-m}^M}
\]
By Lemma \ref{lem:sheafifiedmu}, we have $(\mu_n)_{-m}=\mu_{n-m}$. If $m\geq n-2$, the multiplication homomorphism $\mu_{n-m}:\K_{n-m}^M\to \K_{n-m}^Q$ is an isomorphism (by Matsumoto's theorem for $n-m=2$) and the result follows.
\end{proof}

\begin{rem}
Observe that, in general, $(\psi_n)_{-m}\neq \psi_{n-m}$. Indeed, let $n=3$ and $m=1$. Using Corollary \ref{contraction}, the above diagram reads:
\[
\xymatrix@C=3em{\K_2^{M}\ar[r]^-{\mu_2}\ar[rd]_-{2} & \K_{2}^Q\ar[d]^-{(\psi_3)_{-1}} \\
 & \K_{2}^M}
\]
and $\mu_2$ is an isomorphism by Matsumoto's theorem. If $L$ is a field such that $K_{2}^M(L)/2$ is non trivial, then $(\psi_3)_{-1}(L)$ is not surjective. On the other hand, $\psi_2$ is an isomorphism.
\end{rem}

\subsubsection*{Factoring the connecting homomorphism}
We now come back to our computation of $\bpi_{n-2}^{\aone}(SL_{n-1})$, and thus to Diagram \ref{eqn:main}:
\[
\xymatrix{\K_{n+1}^{MW}\ar[d]^-{\delta_n} & & &  \\
\bpi_{n-1}(SL_n)\ar[r]_-{q_{n-1}}\ar[d] & \K_n^{MW}\ar[r]_-{\delta_{n-1}}\ar[d] & \bpi_{n-2}(SL_{n-1})\ar[r] & \K_{n-1}^Q\ar[r] & 0 \\
\K_n^Q\ar[d]\ar[r]_{\psi_n} & \K_n^M& & \\
0. & & & }
\]

Suppose first that $n=2m$. In this case, Lemma \ref{lem:eulerclasscomputation} shows that $q_{2m-1}\circ \delta_{2m}=\eta$ and thus the above diagram yields an exact sequence
\[
\xymatrix{0\ar[r] & \mathbf{S}_{2m}\ar[r] & \bpi_{2m-2}(SL_{2m-1})\ar[r] & \K_{2m-1}^Q\ar[r] & 0}
\]
for $m\geq 1$ by definition of the sheaf $\mathbf{S}_{2m}$. If $n=2m+1$, then Lemma \ref{lem:eulerclasscomputation} proves that we have $q_{2m}\circ \delta_{2m+1}=0$. There exists then a morphism $\psi^\prime_{2m+1}$ making the diagram
\[
\xymatrix{\K_{2m+2}^{MW}\ar[d]^-{\delta_{2m+1}} & & &  \\
\bpi_{2m}(SL_{2m+1})\ar[r]_-{q_{2m}}\ar[d] & \K_{2m+1}^{MW}\ar[r]_-{\delta_{2m}}\ar[d] & \bpi_{2m-1}(SL_{2m})\ar[r] & \K_{2m}^Q\ar[r] & 0 \\
\K_{2m+1}^Q\ar[d]\ar@{-->}[ru]_-{\psi^\prime_{2m+1}}\ar[r]_{\psi_{2m+1}} & \K_{2m+1}^M& & \\
0 & & & }
\]
commute and we obtain an exact sequence of the form:
\[
\xymatrix{\K_{2m+1}^Q\ar[r]^-{\psi^\prime_{2m+1}} & \K_{2m+1}^{\MW}\ar[r]^-{\delta_{2m}} & \bpi_{2m-1}(SL_{2m})\ar[r] & \K_{2m}^Q\ar[r] & 0.}
\]

As the sheaf $\K^{MW}_{2m+1}$ is a fiber product of ${\mathbf I}^{2m+1}$ and $\K^M_{2m+1}$ over $\K^M_{2m+1}/2$ by the discussion prior to Proposition \ref{prop:milnorwittexactsequences}, we are left to understand the composite of $\psi^\prime_{2m+1}$ with the surjective morphism $\K^{MW}_{2m+1} \to {\mathbf I}^{2m+1}$.

\begin{lem}\label{lem:image}
The composite morphism $\K^Q_{2m+1} \stackrel{\psi^\prime_{2m+1}}{\to} \K^{MW}_{2m+1} \to {\mathbf I}^{2m+1}$ is trivial.
\end{lem}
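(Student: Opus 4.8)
The plan is to deduce the vanishing formally from the long exact sequence together with the Euler class computation of Lemma~\ref{lem:eulerclasscomputation}, without ever computing with symbols in ${\mathbf I}^{2m+1}$ directly.

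First I would reduce the statement about $\psi'_{2m+1}$ to a statement about $q_{2m}$. Recall that $\K^Q_{2m+1}=\coker(\delta_{2m+1})$ and that, because $q_{2m}\circ\delta_{2m+1}=0$ in the odd case, the morphism $\psi'_{2m+1}$ is by construction the factorization of $q_{2m}$ through the epimorphism $\rho\colon\bpi_{2m}(SL_{2m+1})\to\K^Q_{2m+1}$, i.e. $q_{2m}=\psi'_{2m+1}\circ\rho$. Since $\rho$ is an epimorphism, the composite $\K^Q_{2m+1}\xrightarrow{\psi'_{2m+1}}\K^{MW}_{2m+1}\xrightarrow{p}{\mathbf I}^{2m+1}$ (with $p$ the projection of the exact sequence $0\to 2\K^M_{2m+1}\to\K^{MW}_{2m+1}\xrightarrow{p}{\mathbf I}^{2m+1}\to 0$) vanishes if and only if $p\circ q_{2m}=0$.

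Next I would trade the projection $p$ for multiplication by $\eta$. By the last assertion of Proposition~\ref{prop:milnorwittexactsequences}, the morphism $\K^{MW}_{2m+1}\xrightarrow{\cdot\eta}\K^{MW}_{2m}$ factors as $\K^{MW}_{2m+1}\xrightarrow{p}{\mathbf I}^{2m+1}\hookrightarrow\K^{MW}_{2m}$, where the second map is the injective inclusion of the exact sequence $0\to{\mathbf I}^{2m+1}\to\K^{MW}_{2m}\to\K^M_{2m}\to 0$. Injectivity of this inclusion shows that $p\circ q_{2m}=0$ is equivalent to $\eta\cdot q_{2m}=0$, i.e. to the vanishing of the composite of $q_{2m}$ with $\K^{MW}_{2m+1}\xrightarrow{\cdot\eta}\K^{MW}_{2m}$.

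The key step is then to recognize $\cdot\eta$ as a composite that annihilates $q_{2m}$. Applying Lemma~\ref{lem:eulerclasscomputation} in the even case (i.e. with $n=2m$) identifies the multiplication-by-$\eta$ morphism $\K^{MW}_{2m+1}\to\K^{MW}_{2m}$ with $q_{2m-1}\circ\delta_{2m}$. The crucial observation is that the morphism $\delta_{2m}\colon\K^{MW}_{2m+1}\to\bpi_{2m-1}(SL_{2m})$ occurring here as a factor of $\eta$ is literally the morphism that immediately follows $q_{2m}$ in the long exact sequence attached to the fiber sequence $SL_{2m+1}/SL_{2m}\to BSL_{2m}\to BSL_{2m+1}$. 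Hence $\delta_{2m}\circ q_{2m}=0$ by exactness, and therefore
\[
\eta\cdot q_{2m}=q_{2m-1}\circ\delta_{2m}\circ q_{2m}=0,
\]
which gives the claim. I expect the only delicate point to be the index bookkeeping in this last step: one must check that the $\delta_{2m}$ produced by the even-index Euler class computation of Lemma~\ref{lem:eulerclasscomputation} and the $\delta_{2m}$ following $q_{2m}$ in the long exact sequence are the same morphism $\K^{MW}_{2m+1}\to\bpi_{2m-1}(SL_{2m})$. Once this identification is in place, the argument requires no computation in Milnor--Witt $K$-theory or in the sheaves ${\mathbf I}^{\bullet}$ whatsoever.
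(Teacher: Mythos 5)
Your proposal is correct and follows essentially the same route as the paper's own proof: both arguments combine the factorization of multiplication by $\eta$ through the injection ${\mathbf I}^{2m+1}\hookrightarrow \K^{MW}_{2m}$ (Proposition \ref{prop:milnorwittexactsequences}), the even-case Euler class computation $q_{2m-1}\circ\delta_{2m}=\eta$ of Lemma \ref{lem:eulerclasscomputation}, the exactness relation $\delta_{2m}\circ q_{2m}=0$, and descent along the epimorphism $\bpi_{2m}^{\aone}(SL_{2m+1})\to\K^Q_{2m+1}$, differing only in the order in which these reductions are presented. Your flagged ``delicate point'' is automatic: $\delta_{2m}$ is defined once and for all as the map induced by $SL_{2m+1}/SL_{2m}\to BSL_{2m}$ on homotopy sheaves, so it is literally the same morphism in both roles.
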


\begin{proof}
Consider the diagram
\[
\xymatrix{
& & \bpi_{2m}^{\aone}(SL_{2m+1}) \ar[d] & 0 \ar[d]& & & \\
0 \ar[r]& 2\K^M_{2m+1}\ar[r] & \K^{MW}_{2m+1}\ar[d]\ar[r]\ar[dr]^{\eta} & {\mathbf I}^{2m+1} \ar[r]\ar[d] & 0 & & \\
 & & \bpi_{2m-1}^{\aone}(SL_{2m}) \ar[r]\ar[d] & \K^{MW}_{2m} \ar[r]\ar[d] & \bpi_{2m-2}^{\aone}(SL_{2m-1}) & & \\
& & \K^{Q}_{2m} \ar[r]\ar[d] & \K^M_{2m}\ar[d] & & & \\
& & 0 & 0. & & &
}
\]
The short exact sequence in the second row and the vertical short exact sequence involving ${\mathbf I}^{2m+1}$ are those from Proposition \ref{prop:milnorwittexactsequences}.  Moreover, the commutativity of the triangle with the arrow labeled $\eta$ as its bottom edge is also a consequence of Proposition \ref{prop:milnorwittexactsequences}.  The commutativity of the lower triangle with $\eta$ on the diagonal was established in Lemma \ref{lem:eulerclasscomputation}.

Now, any element in $\bpi_{2m}^{\aone}(SL_{2m+1})$ goes to zero in $\bpi_{2m-1}^{\aone}(SL_{2m})$, and therefore the composite into $\K^{MW}_{2m}$ is also zero.  By commutativity of the diagram, the image of an element in $\bpi_{2m}^{\aone}(SL_{2m+1})$ in ${\mathbf I}^{2m+1}$ is also zero.  Now the morphism $\bpi_{2m}^{\aone}(SL_{2m+1}) \to \K^{MW}_m$ factors through $\psi^\prime_{2m+1}$ by definition, and this yields the first statement.
\end{proof}

As a consequence, we see that $\psi^\prime_{2m+1}$ actually factors as a composite
\[
\xymatrix{\K_{2m+1}^Q\ar[r]^-{\psi_{2m+1}} & 2\K_{2m+1}^M\ar[r] & \K_{2m+1}^{MW}}
\]
where the right arrow is the one of Proposition \ref{prop:milnorwittexactsequences}. It follows that there is morphism of sheaves $\mathbf{S}_{2m+1}\to \K_{2m+1}^M/2$. For any integer $m\in \N$, we define the sheaf $\mathbf{T}_{2m+1}$ as the fiber product
\[
\xymatrix{{\mathbf T}_{2m+1}\ar[r]\ar[d] & {\bf I}^{2m+1}\ar[d] \\
{\mathbf S}_{2m+1}\ar[r] & \K_{2m+1}^M/2.}
\]
In sum, we have established the following result, which establishes Theorem \ref{thmintro:mainhomotopysheaf} stated in the introduction.

\begin{thm}
\label{thm:homotopysheafevencase}
If $k$ is a perfect field (having characteristic unequal to $2$), for any integer $m \geq 1$, there are short exact sequences of strictly $\aone$-invariant sheaves of the form
\[
\begin{split}
0 \longrightarrow {\mathbf T}_{2m+1} \longrightarrow &\bpi_{2m}^{\aone}(BSL_{2m}) \longrightarrow \K^Q_{2m} \longrightarrow 0, \\
0 \longrightarrow {\mathbf S}_{2m+2} \longrightarrow &\bpi_{2m+1}^{\aone}(BSL_{2m+1}) \longrightarrow \K^Q_{2m+1} \longrightarrow 0.
\end{split}
\]
\end{thm}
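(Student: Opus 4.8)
The plan is to deduce both short exact sequences from the computation of $\bpi_{n-2}^{\aone}(SL_{n-1})$ encoded in Diagram (\ref{eqn:main}), translating the answer back to $BSL$ and identifying the two cokernels that arise with $\mathbf{S}$ and $\mathbf{T}$. First I would invoke the lemma relating the homotopy sheaves of $SL_n$ and $BSL_n$, which gives $\bpi_i^{\aone}(SL_n)\cong\bpi_{i+1}^{\aone}(BSL_n)$; under this dictionary $\bpi_{2m}^{\aone}(BSL_{2m})\cong\bpi_{2m-1}^{\aone}(SL_{2m})$ is the case $n=2m+1$ of Diagram (\ref{eqn:main}) (odd $n$), while $\bpi_{2m+1}^{\aone}(BSL_{2m+1})\cong\bpi_{2m}^{\aone}(SL_{2m+1})$ is the case $n=2m+2$ (even $n$). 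In each case the middle row of (\ref{eqn:main}) is exact, and its right-hand map $\bpi_{n-2}^{\aone}(SL_{n-1})\to\K_{n-1}^Q$ is surjective because the next term vanishes by the $\aone$-$(n-2)$-connectivity of $\A^n\setminus 0$; the sequence therefore collapses to
\[
0\longrightarrow\operatorname{coker}(q_{n-1})\longrightarrow\bpi_{n-2}^{\aone}(SL_{n-1})\longrightarrow\K_{n-1}^Q\longrightarrow 0,
\]
so that everything reduces to computing $\operatorname{coker}(q_{n-1})=\K_n^{MW}/\operatorname{im}(q_{n-1})$.

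For even $n$, I would feed in Lemma \ref{lem:eulerclasscomputation}, which gives $q_{n-1}\circ\delta_n=\eta$. By Proposition \ref{prop:milnorwittexactsequences} the image of multiplication by $\eta$ is precisely $\mathbf{I}^{n+1}=\ker(\K_n^{MW}\to\K_n^M)$, so $\operatorname{im}(q_{n-1})$ contains this kernel; moreover the square defining $\psi_n$ shows that $\K_n^{MW}\to\K_n^M$ sends $\operatorname{im}(q_{n-1})$ onto $\operatorname{im}(\psi_n)$ (since $\bpi_{n-1}^{\aone}(SL_n)\to\K_n^Q$ is surjective). A short diagram chase then identifies $\operatorname{im}(q_{n-1})$ with the full preimage of $\operatorname{im}(\psi_n)$, whence $\operatorname{coker}(q_{n-1})\cong\K_n^M/\operatorname{im}(\psi_n)=\mathbf{S}_n$. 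Specializing to $n=2m+2$ yields the second displayed sequence.

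For odd $n=2m+1$, Lemma \ref{lem:eulerclasscomputation} gives $q_{2m}\circ\delta_{2m+1}=0$, so $q_{2m}$ factors through $\psi'_{2m+1}:\K_{2m+1}^Q\to\K_{2m+1}^{MW}$ and $\operatorname{coker}(q_{2m})=\operatorname{coker}(\psi'_{2m+1})$. Lemma \ref{lem:image} shows that $\psi'_{2m+1}$ composed with $\K_{2m+1}^{MW}\to\mathbf{I}^{2m+1}$ vanishes, hence $\psi'_{2m+1}$ factors through $2\K_{2m+1}^M=\ker(\K_{2m+1}^{MW}\to\mathbf{I}^{2m+1})$. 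To compute the cokernel I would use the cartesian square (\ref{eqn:Milnor-Witt}), which exhibits $\K_{2m+1}^{MW}$ as the fibre product $\K_{2m+1}^M\times_{\K_{2m+1}^M/2}\mathbf{I}^{2m+1}$, and write down the map $\K_{2m+1}^{MW}\to\mathbf{T}_{2m+1}$ sending $(a,b)$ to $(\bar a,b)$, where $\bar a$ denotes the class of $a$ in $\mathbf{S}_{2m+1}=\K_{2m+1}^M/\operatorname{im}(\psi_{2m+1})$. Compatibility of the two maps to $\K_{2m+1}^M/2$ makes this well-defined, lifting makes it surjective, and its kernel is exactly the copy of $\operatorname{im}(\psi_{2m+1})\subseteq 2\K_{2m+1}^M$ that equals $\operatorname{im}(\psi'_{2m+1})$; hence $\operatorname{coker}(\psi'_{2m+1})\cong\mathbf{T}_{2m+1}$, giving the first displayed sequence.

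I expect the odd-case identification $\operatorname{coker}(\psi'_{2m+1})\cong\mathbf{T}_{2m+1}$ to be the main obstacle: it is the only step requiring one to play both short exact sequences of Proposition \ref{prop:milnorwittexactsequences} against the cartesian structure of (\ref{eqn:Milnor-Witt}), and Lemma \ref{lem:image} is indispensable there to confine $\operatorname{im}(\psi'_{2m+1})$ to $2\K_{2m+1}^M$, which is precisely what forces the fibre-product description. Strict $\aone$-invariance of every term is then routine: the sheaves $\K^Q_*$, $\K^M_*$, $\K^{MW}_*$ and $\mathbf{I}^*$ are strictly $\aone$-invariant, fibre products and cokernels of morphisms between such sheaves inherit the property, and the homotopy sheaves appearing are strictly $\aone$-invariant by Morel's theory, so the two displayed sequences are sequences of strictly $\aone$-invariant sheaves as asserted.
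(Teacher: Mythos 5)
Your proposal is correct and follows essentially the same route as the paper: it runs the long exact sequence of the fibration $SL_n/SL_{n-1}\to BSL_{n-1}\to BSL_n$ through Diagram (\ref{eqn:main}), uses Lemma \ref{lem:eulerclasscomputation} to split into the two parity cases, and identifies $\operatorname{coker}(q_{n-1})$ with $\mathbf{S}_n$ (even $n$) respectively $\operatorname{coker}(\psi'_{2m+1})\cong\mathbf{T}_{2m+1}$ (odd $n$) via Lemma \ref{lem:image} and the cartesian square (\ref{eqn:Milnor-Witt}). The only difference is that you spell out explicitly the diagram chases (preimage identification in the even case, the fibre-product computation of $\operatorname{coker}(\psi'_{2m+1})$ in the odd case) that the paper leaves implicit in the phrase ``by definition of the sheaf $\mathbf{S}_{2m}$'' and in its construction of $\mathbf{T}_{2m+1}$.
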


\subsubsection*{Further computations of contracted homotopy sheaves of $BGL_n$}
We now proceed to describe contractions of the homotopy sheaves of $SL_n$ computed above.  Since $SL_n$ is the $\aone$-connected component of the identity in $GL_n$, the descriptions of homotopy sheaves of degree $\geq 2$ for $GL_n$ are the same as those for $SL_n$.  The computations we now describe will be used later to describe isomorphism classes of vector bundles on spheres, and also to understand compatibility of the computations of homotopy sheaves with corresponding computations in topology.

\begin{prop}
\label{prop:tatehomotopysheavesofgln}
Suppose $nm \geq 1$ and $j \geq 0$ are integers.  There are short exact sequences of the form
\[
\begin{split}
0 \longrightarrow ({\mathbf T}_{2m+1})_{-j} \longrightarrow &\bpi_{2m-1,j}^{\aone}(GL_{2m}) \longrightarrow \K^{Q}_{2m-j} \longrightarrow 0, \text{ and } \\
0 \longrightarrow ({\mathbf S}_{2(m+1)})_{-j} \longrightarrow &\bpi_{2m,j}^{\aone}(GL_{2m+1}) \longrightarrow \K^{Q}_{2m+1-j} \longrightarrow 0.
\end{split}
\]
\end{prop}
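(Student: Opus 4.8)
The plan is to obtain these sequences by contracting the short exact sequences of Theorem \ref{thm:homotopysheafevencase} $j$ times and reinterpreting each of the three terms. Those sequences are stated for $BSL$, so first I would translate them into statements about $SL$: by the lemma preceding Theorem \ref{thm:stabilizationsln} one has $\bpi_i^{\aone}(SL_n) \cong \bpi_{i+1}^{\aone}(BSL_n)$ for all $i \geq 0$, whence Theorem \ref{thm:homotopysheafevencase} furnishes short exact sequences
\[
0 \longrightarrow {\mathbf T}_{2m+1} \longrightarrow \bpi_{2m-1}^{\aone}(SL_{2m}) \longrightarrow \K^Q_{2m} \longrightarrow 0
\]
and
\[
0 \longrightarrow {\mathbf S}_{2m+2} \longrightarrow \bpi_{2m}^{\aone}(SL_{2m+1}) \longrightarrow \K^Q_{2m+1} \longrightarrow 0.
\]

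Since $m \geq 1$, the degrees $2m-1$ and $2m$ are both $\geq 1$, so the identification $\bpi_i^{\aone}(SL_n) \cong \bpi_i^{\aone}(GL_n)$ valid for $i \geq 1$ (the same lemma, reflecting the fact that $SL_n$ is the $\aone$-connected component of the identity in $GL_n$) lets me replace the middle terms by $\bpi_{2m-1}^{\aone}(GL_{2m})$ and $\bpi_{2m}^{\aone}(GL_{2m+1})$ respectively. The heart of the argument is then to apply the contraction functor $(\cdot)_{-j}$ to each sequence. Because all the sheaves appearing --- ${\mathbf T}_{2m+1}$, ${\mathbf S}_{2m+2}$, the Quillen K-theory sheaves, and the homotopy sheaves of $GL_n$ --- are strictly $\aone$-invariant, and because contraction is an exact functor preserving strict $\aone$-invariance, the contracted sequences remain short exact.

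It then remains only to identify the three contracted terms. For the middle term, Corollary \ref{cor:tatehomotopysheavesofgln} gives $(\bpi_i^{\aone}(GL_n))_{-j} \cong \bpi_{i,j}^{\aone}(GL_n)$; for the quotient, Lemma \ref{lem:contractionsofquillenandmilnor} gives $(\K^Q_i)_{-j} \cong \K^Q_{i-j}$; and the kernels are, by definition, $({\mathbf T}_{2m+1})_{-j}$ and $({\mathbf S}_{2(m+1)})_{-j}$. Assembling these identifications yields exactly the two asserted sequences. I expect no genuine obstacle here: the argument is essentially formal, combining the main homotopy-sheaf computation with the bookkeeping established earlier for contractions. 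The only points deserving care are confirming that every term is strictly $\aone$-invariant, so that contraction stays exact, and checking that the degrees are large enough for the $SL_n$--$GL_n$ comparison to apply --- both of which are guaranteed under the standing hypothesis $m \geq 1$.
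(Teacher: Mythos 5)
Your proposal is correct and follows essentially the same route as the paper, whose entire proof is the one-line instruction to combine Lemma \ref{lem:contractionsofquillenandmilnor} with Theorem \ref{thm:homotopysheafevencase}; you have simply made explicit the intermediate bookkeeping (the $BSL_n$--$SL_n$--$GL_n$ identifications, exactness of contraction on strictly $\aone$-invariant sheaves, and Corollary \ref{cor:tatehomotopysheavesofgln} for the middle term) that the paper leaves implicit. The only cosmetic remark is that when $j$ exceeds the K-theory degree the quotient term should be read as $0$, consistent with Lemma \ref{lem:contractionsofquillenandmilnor} and the vanishing of negative Quillen K-theory.
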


\begin{proof}
Combine Lemma \ref{lem:contractionsofquillenandmilnor} with Theorem \ref{thmintro:mainhomotopysheaf}.
\end{proof}

%%%%%%%%%%%%%%%%%%%%%%%%%%%%%%%%%%%%%%%%%%%%%%%%%%%%%%%%%%%%%%%%%%%%%%%%%%%%%%%%%%%%%%%%%%%%%%%%%%%%%%%%%%%%%%%%%

\section{Unimodular rows and vector bundles on split quadrics}
\label{s:unimodular}
In this section, we begin by reviewing some results regarding the relationship between vector bundles and oriented vector bundles, and we then recall some ideas from the theory of unimodular rows in the context of $\aone$-homotopy theory.  We then use the computations of Section \ref{section:metastableSL2n} together with techniques of obstruction theory using the Postnikov tower in $\aone$-homotopy theory (we refer the reader to \cite[\S 2.4]{AsokPi1} for a digest of all the results that will be used) to give a general procedure to describe sets of isomorphism classes of vector bundles.  We refer to vector bundles on the split smooth affine quadric $Q_{2n-1}$ having rank $n-1$ as those of critical rank: above this rank the classification of vector bundles is a stable problem; at or below this rank, the problem is unstable.

\subsubsection*{Oriented vector bundles}
If $X$ is a smooth affine variety, Morel observed \cite[Theorem 8.1.3]{MField} that the set $[X,BGL_r]_{\aone}$ of unpointed $\aone$-homotopy classes of maps is in natural bijection with the set of isomorphism classes of rank $r$ vector bundles on $X$ (when $r = 2$, this relies on results of Moser \cite{Moser}).  As we mentioned in the introduction, it will be technically more convenient to work with maps to $BSL_n$ since the latter is $\aone$-simply connected.  We take as a model for $BSL_n$ the total space of the complement of the zero section of the dual of the determinant of the universal vector bundle over $Gr_n$.  Thus, we view $BSL_n$ as the total space of an explicit $\gm$-torsor over $Gr_n$.  Our present goal is to relate free $\aone$-homotopy classes of maps from a smooth affine scheme to $BSL_n$ to oriented vector bundles.  In what follows, we freely use the terminology of \cite[Appendix A]{MField}.

There is an $\aone$-fiber sequence
\[
BSL_n \longrightarrow Gr_n \longrightarrow B\gm.
\]
Given $X$ a smooth affine $k$-scheme as above, we can map $X_+$ into the above fiber sequence, and the map ``forget the base-point" defines a bijection $[X_+,BSL_n]_{\aone} \isomt [X,BSL_n]_{\aone}$ (pointed $\aone$-homotopy classes on the left and free $\aone$-homotopy classes on the left).  By means of this bijection, we can work with free $\aone$-homotopy classes just as we would work with pointed $\aone$-homotopy classes.  Since $BSL_n$ is the $\aone$-homotopy fiber of the map $Gr_n \to B\gm$, we know that lifts $X \to BSL_n$ of a given map $X \to Gr_n$ correspond bijectively
with chosen null-$\aone$-homotopies of the composite map $X \to B\gm$.

The map $Gr_n \to B\gm$ is induced by the determinant, and given a vector bundle $\mathcal{E}$ on $X$ corresponding to a map $X \to Gr_n$, the composite map $X \to B\gm$ sends $\mathcal{E}$ to the line bundle $\det \mathcal{E}$.  The constant map $X_+ \to B\gm$ sending everything to the base-point corresponds to the trivial bundle on $X$, so the composite map $X \to B\gm$ is null-$\aone$-homotopic if and only if $\det \mathcal{E}$ is trivial.  A choice of null-$\aone$-homotopy therefore yields a choice of isomorphism $\det \mathcal{E} \cong \O_X$.  Thus, we obtain a function
\[
[X,BSL_n]_{\aone} \longrightarrow \mathscr{V}^o_n(X).
\]
To see this map is a bijection one can repeat, essentially verbatim, the steps of Morel's identification of maps to the Grassmannian with isomorphism classes of vector bundles replacing $Gr_n$ with the model of $BSL_n$ mentioned above.  The first necessary technical result is the following one.

\begin{thm}[Morel]
\label{thm:affinezariskibg}
The space $Sing_*^{\aone}(BSL_n)$ satisfies the affine BG property in the Zariski topology.
\end{thm}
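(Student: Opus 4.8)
The plan is to obtain this by transporting to the chosen model of $BSL_n$ the argument by which Morel establishes the analogous affine BG property for $BGL_n$ in his proof of affine representability of vector bundles \cite[\S 8.1]{MField}. Recall that the affine Zariski BG property demands that for every elementary distinguished Zariski square of smooth affine $k$-schemes — concretely $X = \Spec R$, $U = \Spec R_f$, $V = \Spec R_g$ with $(f,g) = R$, and $W = \Spec R_{fg}$ — the square obtained by applying $Sing_*^{\aone}(BSL_n)$ is homotopy cartesian. The first point is that $SL_n$, like $\gm$ and $GL_n$, is a \emph{special} group, so that $SL_n$-torsors on any smooth $k$-scheme are Zariski-locally trivial and $H^1_{\Zar}(-,SL_n) = H^1_{\Nis}(-,SL_n)$; this is what lets us work entirely in the Zariski topology. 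Using the description of $BSL_n$ as the $\gm$-torsor over $Gr_n$ given by the complement of the zero section of $\det^\vee$, the sections of $Sing_*^{\aone}(BSL_n)$ over a smooth affine $Y$ are governed by oriented vector bundles on $Y \times \Delta^{\bullet}_{\mathrm{alg}}$, and the task becomes a Mayer--Vietoris statement for such bundles across the square, uniform in the algebraic-simplex direction.

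The heart of the argument then splits into two classical inputs. First, finitely generated projective modules (equivalently vector bundles) glue along a Zariski cover of an affine scheme by two principal opens, compatibly with a chosen trivialization of the determinant; this patching produces, on homotopy groups, the expected Mayer--Vietoris exact sequences relating the (naive) homotopy groups of $Sing_*^{\aone}(BSL_n)$ over $X$, $U$, $V$ and $W$, and identifies $\pi_0$ with $H^1_{\Zar}(-,SL_n)$. Second, one invokes homotopy invariance of vector bundles on smooth affine $k$-algebras (Lindel's theorem), refined so as to respect determinant trivializations, to conclude that replacing $Y$ by $Y \times \Delta^m_{\mathrm{alg}}$ does not alter the relevant groups; this is precisely what makes the singular construction, rather than the bare bar construction, satisfy descent. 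Equivalently, one may argue via the fiber sequence $BSL_n \to Gr_n \to B\gm$: granting the affine BG property for $Gr_n$ and for $B\gm$ (the line-bundle case), and that $Sing_*^{\aone}$ carries this $\aone$-fiber sequence to an honest fiber sequence of singular complexes, the property is inherited by the homotopy fiber $BSL_n$, since homotopy-cartesian squares are stable under the formation of homotopy fibers.

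The step I expect to be the main obstacle is promoting the degreewise, $\pi_0$-level Mayer--Vietoris to a genuine homotopy-cartesian square of diagonal simplicial sets: the diagonal of a bisimplicial square that is homotopy cartesian in each fixed simplicial degree need not remain homotopy cartesian, so one must supply a uniformity — a Bousfield--Friedlander-type $\pi_*$-Kan condition, or the levelwise-fibration input coming from Zariski-local triviality of torsors together with Lindel's homotopy invariance — in order to pass to the diagonal. Verifying the determinant-compatible refinement of Lindel's theorem and checking that Morel's $BGL_n$ argument transfers to the $SL_n$-model — in particular that the $\gm$-torsor structure $BSL_n \to Gr_n$ interacts correctly with the algebraic thickenings — is where essentially all of the work lies; the non-abelian bookkeeping in the range of $\pi_0$ and $\pi_1$, where ``exact sequence'' must be read in the pointed, torsor-theoretic sense, is the part demanding the most care.
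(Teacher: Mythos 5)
Your proposal is correct and follows essentially the same route as the paper: the paper's proof simply says that one repeats Morel's argument for $BGL_n$ (\cite[Theorem 9.19 and Corollary 9.20]{MField}), observing that the classical inputs about projective modules carry over to projective modules equipped with a chosen trivialization of the determinant --- which is precisely your two ``classical inputs'' (patching and Lindel-type homotopy invariance, refined to respect orientations). The technical concerns you raise about passing to the diagonal are internal to Morel's original argument and are inherited along with it, so they require no new work here; your alternative fiber-sequence route via $BSL_n \to Gr_n \to B\gm$ is extra and not what the paper does.
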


\begin{proof}
This is proven in exactly the same fashion as \cite[Theorem 9.19 and Corollary 9.20]{MField}.  One need only observe that the results referenced hold not just for projective modules, but also projective modules equipped with a chosen trivialization of the determinant.
\end{proof}

\begin{thm}
\label{thm:orientedvectorbundles}
If $X$ is a smooth affine scheme, there is a bijection
\[
[X,BSL_n]_{\aone} \cong \mathscr{V}^o(X).
\]
\end{thm}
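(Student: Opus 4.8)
The plan is to show that the map $\Phi\colon [X,BSL_n]_{\aone}\to \mathscr{V}^o_n(X)$ constructed in the preceding discussion is a bijection by adapting, essentially verbatim, Morel's proof of the unoriented statement $[X,BGL_n]_{\aone}\cong \mathscr{V}_n(X)$ (\cite[Theorem 8.1.3]{MField}, which rests on \cite[Theorems 9.19 and 9.20]{MField}). The map $\Phi$ has already been produced above: using the $\aone$-fiber sequence $BSL_n\to Gr_n\to B\gm$ and the model of $BSL_n$ as the complement of the zero section in the dual of the determinant of the universal bundle over $Gr_n$, a class in $[X,BSL_n]_{\aone}$ amounts to a map $X\to Gr_n$ (a rank $n$ bundle $\mathcal{E}$) together with a null-$\aone$-homotopy of the composite $X\to B\gm$ (a trivialization of $\det\mathcal{E}$). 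The crucial technical input making the whole argument run is Theorem \ref{thm:affinezariskibg}: the singular functor $Sing^{\aone}_*(BSL_n)$ satisfies the affine Zariski BG property, which is exactly the analog, for the model of $BSL_n$ with its orientation data, of the property Morel verifies for $Gr_n$.

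The first key step is to pass from genuine to naive $\aone$-homotopy classes. Morel's affine representability machinery shows that a simplicially fibrant, $\aone$-invariant simplicial sheaf satisfying the affine BG property has the feature that, for $X$ smooth affine, its $\aone$-homotopy classes of maps out of $X$ are computed by the naive homotopy classes of $Sing^{\aone}_*(BSL_n)(X)$. Since $Sing^{\aone}_*$ produces an $\aone$-invariant sheaf and Theorem \ref{thm:affinezariskibg} supplies the affine BG property, this applies in our situation, so that for affine $X$ one obtains
\[
[X,BSL_n]_{\aone}\;\cong\;\pi_0\bigl(Sing^{\aone}_*(BSL_n)(X)\bigr),
\]
that is, maps $X\to BSL_n$ modulo honest algebraic $\aone$-homotopies $X\times\aone\to BSL_n$, which are available precisely because $X$ is affine.

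The second key step is to identify these naive homotopy classes with $\mathscr{V}^o_n(X)$. A map from affine $X$ into the chosen model of $BSL_n$ is a rank $n$ vector bundle $\mathcal{E}$ equipped with a nowhere-vanishing section of $(\det\mathcal{E})^\vee$, equivalently an oriented bundle; two naively $\aone$-homotopic such maps restrict, at the two endpoints of $\aone$, to isomorphic oriented bundles, and conversely isomorphic oriented bundles yield naively homotopic maps. This equivalence is Morel's identification of naive homotopy classes with isomorphism classes, transported to the oriented setting, and it rests on a Lindel-type homotopy-invariance statement for vector bundles on smooth affine schemes (a bundle on $X\times\aone$ has isomorphic restrictions at $0$ and $1$), with the determinant trivialization carried along throughout.

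I expect the main obstacle to be the first step: upgrading the affine BG property together with $\aone$-invariance to the equality of genuine and naive $\aone$-homotopy classes, which is the real substance of Morel's affine representability theorem. The difficulty here is not a new idea but the bookkeeping of checking that every step of Morel's argument survives when ``projective module'' is systematically replaced by ``projective module equipped with a trivialization of its determinant''; as already observed in the proof of Theorem \ref{thm:affinezariskibg}, the referenced results of \cite{MField} do hold in this refined generality, and one must in addition keep the descent of the orientation data (Zariski versus Nisnevich) consistent at each stage.
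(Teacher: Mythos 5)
There is a genuine gap in your first step. What you call ``Morel's affine representability machinery''---that a simplicially fibrant, $\aone$-invariant simplicial sheaf satisfying the affine BG property in the Zariski topology automatically has its genuine $\aone$-homotopy classes out of smooth affine schemes computed by naive homotopy classes---is not what Morel proves, and it is not true as stated. The $\aone$-homotopy category is a Bousfield localization of the \emph{Nisnevich}-local homotopy category, so Zariski descent data plus $\aone$-invariance alone cannot produce the comparison; some Nisnevich-local input is indispensable. The result that actually does this work, Morel's Theorem A.19 in \cite{MField}, has \emph{two} descent hypotheses: the space itself must satisfy the affine BG property in the Zariski topology, \emph{and} its simplicial loop space must satisfy the affine BG property in the Nisnevich topology. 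In the present situation the second hypothesis concerns $\Omega^1_s Sing_*^{\aone}(BSL_n)^f \cong Sing_*^{\aone}(SL_n)$, and it is supplied by a genuinely nontrivial theorem---Morel's Theorem 9.21 (with Moser's work \cite{Moser} covering the case $n=2$)---which your argument never invokes. This is precisely how the paper argues: Nisnevich affine BG for $Sing_*^{\aone}(SL_n)$ (Morel/Moser), Zariski affine BG for $Sing_*^{\aone}(BSL_n)$ (Theorem \ref{thm:affinezariskibg}), and the loop-space identification, all fed into Theorem A.19.

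Your second step---identifying naive homotopy classes of maps into the chosen model of $BSL_n$ with isomorphism classes of oriented bundles, carrying the determinant trivialization through a Lindel-type homotopy invariance statement---is consistent with the discussion preceding the theorem in the paper and is fine. But note that the missing loop-space hypothesis cannot be absorbed into the ``bookkeeping'' you describe of replacing projective modules by projective modules with trivialized determinant: it is a separate statement about the group $SL_n$ itself (Nisnevich-local, not Zariski-local), not about the model of $BSL_n$, and without it the passage from naive to genuine $\aone$-homotopy classes is unjustified.
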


\begin{proof}
We know that $Sing_*^{\aone}(SL_r)$ satisfies the affine BG property in the Nisnevich topology by the results of Morel and Moser \cite[Theorem 9.21]{MField} or \cite{Moser}.  By the discussion above, $Sing_*^{\aone}(BSL_n)$ satisfies the affine BG property in the Zariski topology.  The result then follows from \cite[Theorem A.19]{MField} since $\Omega^1_s Sing_*^{\aone}(BSL_n)^f \cong Sing_*^{\aone}(SL_n)$.
\end{proof}

\subsubsection*{Unimodular rows and naive $\aone$-homotopies}
Let $R$ be a ring and let $n\geq 3$ be an integer. Recall that a row $(a_1,\ldots,a_n)$ of elements of $R$ is called unimodular if there exists $(b_1,\ldots,b_n)$ such that $\sum a_ib_i=1$.  We write $Um_n(R)$ for the set of unimodular rows of length $n$ over $R$.  The set $Um_n(R)$ is naturally pointed by the row $e_1 := (1,0,\ldots,0)$.  The group $GL_n(R)$ acts on $Um_n(R)$ by multiplication on the right and therefore we can consider the induced action of any subgroups of $GL_n(R)$ on $Um_n(R)$; in this paper, we will mainly be interested in the subgroup $SL_n(R)$ and the subgroup $E_n(R)$ generated by elementary matrices.

Given two smooth $k$-schemes $X$ and $Y$ and a pair of morphisms $f,g: X \to Y$, we say that $f$ and $g$ are naively $\aone$-homotopic if there exists a morphism $H: X \times \aone \to Y$ such that $H(0) = f$ and $H(1) = g$.  Naive $\aone$-homotopy is in general not an equivalence relation and we write $\sim_{\aone}$ for equivalence relation it generates: we set
\[
\hom_{\aone}(X,Y) := \hom_{\Sm_k}(X,Y)/ \sim_{\aone}.
\]
We now specialize to the case $Y = {\mathbb A}^n \setminus 0$.

If $k$ is a field, and $R$ is a (commutative, unital) $k$-algebra, a unimodular row $(a_1,\ldots,a_n)$ can be viewed as a morphism $\Spec R \to {\mathbb A}^n \setminus 0$.  This identification yields a bijection
\[
Um_n(R) \isomto \hom_{\Sm_k}(\Spec R,{\mathbb A}^n \setminus 0).
\]
In this context, two unimodular rows that can be related by right multiplication by an elementary matrix are naively $\aone$-homotopic, and provided $R$ is a smooth $k$-algebra one can show that the induced map
\[
Um_n(R)/E_n(R) \longrightarrow \hom_{\aone}(\Spec R,{\mathbb A}^n \setminus 0)
\]
is a (pointed) bijection \cite[Theorem 2.1]{Fasel08b}.

\subsubsection*{Unimodular rows and $\aone$-homotopy classes}
For any pair of smooth $k$-schemes $X$ and $Y$, the map $\hom_{\Sm_k}(X,Y) \to [X,Y]_{\aone}$ factors through a map
\[
\hom_{\aone}(X,Y) \longrightarrow [X,Y]_{\aone}
\]
since naively $\aone$-homotopic morphisms become equal in $\ho{k}$.  In the special case where $X$ is smooth affine scheme and $Y=\A^n\setminus 0$, the map $\hom_{\aone}(X,\A^n\setminus 0)\to [X,\A^n\setminus 0]_{\aone}$ is in fact a bijection \cite[Remark 8.10]{MField}. It follows that the right-hand side is generated by morphisms of schemes $X\to \A^n\setminus 0$, i.e., unimodular rows of length $n$ over $\O_X(X)$.

The set $[X,{\mathbb A}^n \setminus 0]_{\aone}$ admits an alternate cohomological description, at least provided $n \geq 3$.  Indeed, in that case, ${\mathbb A}^n \setminus 0$ is $\aone$-simply connected.  If $X(k)$ is non-empty, then Lemma \ref{lem:basepoint} shows that the canonical map from pointed $\aone$-homotopy classes to free $\aone$-homotopy classes is a bijection.

Thus, assume we have a pointed $\aone$-homotopy class of maps from $X$ to ${\mathbb A}^n\setminus 0$.  We now consider the $\aone$-Postnikov tower of ${\mathbb A}^n \setminus 0$ (see \cite[\S 2.4]{AsokPi1} for recollections).  Since $\bpi_{n-1}^{\aone}({\mathbb A}^n \setminus 0) = \K^{MW}_n$, and ${\mathbb A}^n \setminus 0$ is $\aone$-$(n-2)$-connected,  we know that the first non-trivial piece of the $\aone$-Postnikov tower is $({\mathbb A}^n \setminus 0)^{(n-1)}$, and that this space is an Eilenberg-Mac Lane space of the form $K(\K^{MW}_n,n-1)$.  Furthermore, there is an $\aone$-fiber sequence
\[
({\mathbb A}^n \setminus 0)^{(n)} \longrightarrow K(\K^{MW}_n,n-1) \longrightarrow K(\bpi_n^{\aone}({\mathbb A}^n \setminus 0),n+1).
\]
Moreover, there are canonical morphisms ${\mathbb A}^n \setminus 0 \longrightarrow ({\mathbb A}^n \setminus 0)^{(i)}$ for arbitrary $i$.

If $(X,x)$ is a pointed smooth scheme, then there is an induced map
\[
[X,{\mathbb A}^n \setminus 0]_{\aone} \longrightarrow H^{n-1}(X,\K^{MW}_n)
\]
obtained by composing with the morphism ${\mathbb A}^n \setminus 0 \to ({\mathbb A}^n \setminus 0)^{(n-1)}=K(\K_n^{MW},n-1)$.

\begin{prop}[{\cite[Theorem 7.16 footnote 11]{MField}}]
\label{prop:degreehomomorphism}
If $X$ is a smooth scheme having the $\aone$-homotopy type of a smooth scheme of dimension $\leq n$ (resp. $\leq n-1)$, then the map
\[
[X,{\mathbb A}^n \setminus 0]_{\aone} \longrightarrow H^{n-1}(X,\K^{MW}_n)
\]
is surjective (resp. bijective).
\end{prop}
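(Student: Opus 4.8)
The plan is to run the obstruction-theoretic argument against the $\aone$-Postnikov tower of $\A^n\setminus 0$ recalled in \cite[\S 2.4]{AsokPi1}, controlling all obstruction and indeterminacy groups by the Nisnevich cohomological dimension of $X$. First I would reduce to a pointed lifting problem. The displayed map is, by construction, the effect on homotopy classes of the structure map $\A^n\setminus 0 \to (\A^n\setminus 0)^{(n-1)} = K(\K^{\MW}_n,n-1)$ to the bottom nontrivial stage of the tower, once one uses representability of Nisnevich cohomology of strictly $\aone$-invariant sheaves by motivic Eilenberg--Mac Lane spaces to identify $H^{n-1}(X,\K^{\MW}_n)$ with homotopy classes of maps into $K(\K^{\MW}_n,n-1)$; since $\A^n\setminus 0$ is $\aone$-simply connected for $n\geq 3$, Lemma \ref{lem:basepoint} lets me pass freely between pointed and free homotopy classes. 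The homotopy sheaves $\bpi_i^{\aone}(\A^n\setminus 0)$ for $i\geq n-1$ are then strictly $\aone$-invariant sheaves of abelian groups, which is exactly what makes both the representability statement and the principal refinement of the Postnikov tower available.

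Next I would record the two relevant cohomology groups. Successive stages of the tower are connected by principal $\aone$-fibrations with fibre $K(\bpi_i^{\aone}(\A^n\setminus 0),i)$ classified by a $k$-invariant: given a map $X \to (\A^n\setminus 0)^{(i-1)}$, the obstruction to lifting it to $(\A^n\setminus 0)^{(i)}$ is the pullback of this $k$-invariant and lies in $H^{i+1}(X,\bpi_i^{\aone}(\A^n\setminus 0))$, while if the obstruction vanishes the set of lifts is a torsor under $H^{i}(X,\bpi_i^{\aone}(\A^n\setminus 0))$. Because the coefficient sheaves are strictly $\aone$-invariant, these cohomology groups are $\aone$-homotopy invariants, so I may compute them on a smooth model of dimension $\leq n$ (resp.\ $\leq n-1$) and invoke the bound $H^{j}_{\Nis}(X,\mathbf{A}) = 0$ for $j > \dim X$, valid for every Nisnevich sheaf $\mathbf{A}$ on a scheme of that Krull dimension.

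I would then climb the tower starting from $(\A^n\setminus 0)^{(n-1)}$. For surjectivity, assuming $\dim X \leq n$, every lifting obstruction lives in $H^{i+1}(X,\bpi_i^{\aone})$ with $i\geq n$, hence in degree $i+1 \geq n+1 > \dim X$, so it vanishes and any class in $H^{n-1}(X,\K^{\MW}_n)$ lifts to a map $X \to \A^n\setminus 0$. For injectivity, assuming $\dim X \leq n-1$, two maps agreeing in the bottom stage differ successively by elements of the indeterminacy groups $H^{i}(X,\bpi_i^{\aone})$ with $i\geq n > \dim X$, all of which vanish, so the maps are $\aone$-homotopic; together with surjectivity (which holds a fortiori in this range) this yields the bijection. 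The hard part will be the convergence of the $\aone$-Postnikov tower rather than the degreewise vanishing: I must know that $[X,\A^n\setminus 0]_{\aone} \isomto \lim_m [X,(\A^n\setminus 0)^{(m)}]_{\aone}$ with vanishing $\lim^1$. This follows because the dimension bound forces the tower $\{[X,(\A^n\setminus 0)^{(m)}]_{\aone}\}_m$ to stabilize once $m \geq \dim X$, so only finitely many stages contribute and the homotopy limit computes the answer; checking this stabilization, equivalently that the cited obstruction theory of \cite[\S 2.4]{AsokPi1} applies verbatim over a finite-dimensional base, is the technical heart of the argument.
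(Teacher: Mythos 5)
Your proposal is correct and takes essentially the same approach as the paper: obstruction theory along the $\aone$-Postnikov tower of $\A^n\setminus 0$, with the obstruction groups $H^{i+1}(X,\bpi_i^{\aone}(\A^n\setminus 0))$ and indeterminacy groups $H^{i}(X,\bpi_i^{\aone}(\A^n\setminus 0))$ for $i \geq n$ killed by the Nisnevich cohomological dimension bound coming from the hypothesis on $\dim X$. The convergence/stabilization point you flag as the technical heart is precisely what the paper's opening sentence disposes of --- the map $[X,\A^n\setminus 0]_{\aone} \to [X,(\A^n\setminus 0)^{(n)}]_{\aone}$ is a bijection because all further obstructions vanish and all subsequent lifts are uniquely determined --- so your argument matches the paper's, merely climbing the tower from the bottom stage rather than descending from the top.
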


\begin{proof}
Since $X$ has the $\aone$-homotopy type of a smooth scheme of dimension $\leq n$, it follows that the map
\[
[X,{\mathbb A}^n \setminus 0]_{\aone} \longrightarrow [X,({\mathbb A}^n \setminus 0)^{(n)}]_{\aone}
\]
is a bijection because all further obstructions to lifting live in groups that vanish, and all subsequent lifts are uniquely determined.

Because of the $\aone$-fiber sequences
\[
({\mathbb A}^n \setminus 0)^{(n)} \longrightarrow K(\K^{MW}_n,n-1) \longrightarrow K(\bpi_n^{\aone}({\mathbb A}^n \setminus 0),n+1),
\]
a map $X \to K(\K^{MW}_n,n-1)$ extends to $({\mathbb A}^n \setminus 0)^{(n)}$ if and only if the composite to $K(\bpi_n^{\aone}({\mathbb A}^n \setminus 0),n+1)$, i.e., an element of $H^{n+1}(X,\bpi_n^{\aone}({\mathbb A}^n \setminus 0))$ is trivial.  This group vanishes by the hypothesis on the dimension (and hence Nisnevich cohomological dimension) of $X$.  Given that we know a lift exists, the space of lifts is parameterized by $H^n(X,\bpi_n^{\aone}({\mathbb A}^n \setminus 0))$.  If $X$ has the $\aone$-homotopy type of a smooth scheme of dimension $\leq n-1$, then this group also vanishes, which gives the bijectivity statement.
\end{proof}

Proposition \ref{prop:degreehomomorphism} can be strengthened slightly when $X = {\mathbb A}^n \setminus 0$ using a cohomological vanishing result (Lemma \ref{lem:cohomology}) we will establish in the next subsection.

\begin{cor}
\label{cor:GWdegree}
When $X = {\mathbb A}^n \setminus 0$, the surjection
\[
[{\mathbb A}^n \setminus 0,{\mathbb A}^n \setminus 0]_{\aone} \longrightarrow H^{n-1}({\mathbb A}^n \setminus 0,\K^{MW}_n)
\]
is actually a bijection.
\end{cor}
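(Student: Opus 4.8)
The plan is to recycle the obstruction-theoretic bookkeeping from the proof of Proposition \ref{prop:degreehomomorphism}, now with $X = \A^n \setminus 0$, and to upgrade the surjectivity found there to a bijection by showing that the single group measuring the remaining indeterminacy vanishes. Since $\A^n \setminus 0$ has the $\aone$-homotopy type of a smooth scheme of dimension $\leq n$, Proposition \ref{prop:degreehomomorphism} already supplies surjectivity and, more precisely, identifies $[\A^n \setminus 0, \A^n \setminus 0]_{\aone}$ with $[\A^n \setminus 0, (\A^n \setminus 0)^{(n)}]_{\aone}$. As in that proof, the nonempty fibers of the resulting map to $H^{n-1}(\A^n \setminus 0, \K^{MW}_n)$ are torsors under $H^n(\A^n \setminus 0, \bpi_n^{\aone}(\A^n \setminus 0))$, coming from the $\aone$-fiber sequence $(\A^n \setminus 0)^{(n)} \to (\A^n \setminus 0)^{(n-1)} \to K(\bpi_n^{\aone}(\A^n \setminus 0), n+1)$. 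Hence it suffices to prove that this cohomology group vanishes: injectivity then follows, and together with the surjectivity already in hand this yields the asserted bijection.

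First I would record that $\bpi_n^{\aone}(\A^n \setminus 0)$ is a strictly $\aone$-invariant sheaf of abelian groups. Indeed, $\A^n \setminus 0$ is $\aone$-$(n-2)$-connected, so by Morel's theorem on the $\aone$-homotopy sheaves of a pointed space this sheaf is abelian and strictly $\aone$-invariant; this is exactly what permits the cohomological machinery below to be applied to it.

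The computational heart is then a suspension-isomorphism reduction, carried out just as in the Euler class computation preceding Remark \ref{rem:comparisonofEulerclasses}. Using the identification $\A^n \setminus 0 \cong \Sigma^{n-1}_s \gm^{\wedge n}$ and the suspension isomorphism in reduced Nisnevich cohomology, I would rewrite
\[
\tilde{H}^n_{\Nis}(\A^n \setminus 0, \bpi_n^{\aone}(\A^n \setminus 0)) \cong \tilde{H}^1_{\Nis}(\gm^{\wedge n}, \bpi_n^{\aone}(\A^n \setminus 0)).
\]
Because $\A^n \setminus 0$ carries a $k$-rational base point, the unreduced group $H^n(\A^n \setminus 0, -)$ agrees with the reduced group $\tilde{H}^n(\A^n \setminus 0, -)$ for $n \geq 1$, the difference being $H^n(\Spec k, -)$, which vanishes since $k$ has Nisnevich cohomological dimension $0$. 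Finally, Lemma \ref{lem:cohomology} asserts that the reduced Nisnevich cohomology of $\gm^{\wedge n}$ with coefficients in a strictly $\aone$-invariant sheaf is concentrated in degree $0$; in particular the degree-one group on the right vanishes, which is precisely what is needed.

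I expect the main obstacle to be the precise obstruction-theoretic bookkeeping: one must be certain that the fiber of the comparison map over a class in its image is governed by $H^n$ (and not by some neighboring degree), and that the torsor structure genuinely forces each nonempty fiber to be a singleton once that group is trivial. This is reassuring, however, because it is literally the same Postnikov-tower analysis already performed in the proof of Proposition \ref{prop:degreehomomorphism}; the only genuinely new input is the vanishing of $H^n(\A^n \setminus 0, \bpi_n^{\aone}(\A^n \setminus 0))$, which Lemma \ref{lem:cohomology} provides.
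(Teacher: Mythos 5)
Your proposal is correct and follows essentially the paper's intended argument: surjectivity comes from Proposition \ref{prop:degreehomomorphism}, and bijectivity follows because the group $H^n(\A^n\setminus 0,\bpi_n^{\aone}(\A^n\setminus 0))$ parameterizing lifts vanishes by Lemma \ref{lem:cohomology}. One minor simplification: Lemma \ref{lem:cohomology} is stated for $H^i(Q_{2n-1},\mathbf{A})\cong H^i(\A^n\setminus 0,\mathbf{A})$ and gives the vanishing directly at $i=n$ (since $n\neq 0,n-1$), so your suspension-isomorphism detour through $\tilde{H}^1(\gm^{\wedge n},-)$ — which restates the lemma's proof rather than its statement — is not needed.
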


%\subsubsection*{The universal example: $Q_{2n-1}$}
%The goal of this section is to describe, up to isomorphism, all vector bundles of sufficiently large rank over $Q_{2n-1}$.  As observed above, projection onto $x_1,\ldots,x_n$ yields a morphism of schemes $p_{2n-1}: Q_{2n-1} \to {\mathbb A}^n \setminus 0$ that is a Zariski locally trivial smooth morphism with fibers isomorphic to ${\mathbb A}^{n-1}$.  In particular, $p_{2n-1}$ is an isomorphism in $\ho k$.  The morphism $Q_{2n-1} \to {\mathbb A}^n \setminus 0$ is the universal unimodular row in the sense that any unimodular row can be obtained by pullback along the morphism $X \to {\mathbb A}^n \setminus 0$ defined by the row.

\subsubsection*{A refined vanishing statement}
The following technical lemma will be useful in describing oriented vector bundles.

\begin{lem}
\label{lem:cohomology}
If $\bf A$ is a strictly $\aone$-invariant sheaf and if $n\geq 2$ is an integer, then
\[
H^i(Q_{2n-1},\bf A)\simeq  \begin{cases} {\bf A}(k) & \text{ if } i=0, \\ {\bf A}_{-n}(k) & \text{ if } i=n-1, \text{ and } \\ 0 & \text{ otherwise. } \end{cases}
\]
\end{lem}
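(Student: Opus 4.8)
The plan is to exploit the identification of the $\aone$-homotopy type of $Q_{2n-1}$ with the motivic sphere $\Sigma^{n-1}_s \gm^{\wedge n}$ recalled in the introduction, together with the representability of Nisnevich cohomology with strictly $\aone$-invariant coefficients. For such a sheaf $\mathbf{A}$, the simplicial Eilenberg--Mac Lane space $K(\mathbf{A},i)$ is $\aone$-local, so that $H^i_{\Nis}(X,\mathbf{A}) \cong [X,K(\mathbf{A},i)]_{\aone}$ for every smooth $X$. Since $Q_{2n-1}$ carries the rational base-point $(1,0,\ldots,0,1,0,\ldots,0)$, the structure map $Q_{2n-1}\to\Spec k$ splits the restriction to the base-point, and I would first reduce to reduced cohomology via the resulting splitting $H^i(Q_{2n-1},\mathbf{A}) \cong \tilde{H}^i(Q_{2n-1},\mathbf{A}) \oplus H^i(\Spec k,\mathbf{A})$, where $\tilde H^i$ denotes the kernel of restriction to the base-point.

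Next I would use the identification $Q_{2n-1}\cong \Sigma^{n-1}_s\gm^{\wedge n}$ in $\hop{k}$ together with the simplicial loop--suspension adjunction. Because ${\mathbf R}\Omega_s K(\mathbf{A},i)\simeq K(\mathbf{A},i-1)$, one obtains $[\Sigma^{n-1}_s\gm^{\wedge n},K(\mathbf{A},i)]_{\aone}\cong[\gm^{\wedge n},K(\mathbf{A},i-n+1)]_{\aone}$, that is, $\tilde H^i(Q_{2n-1},\mathbf{A})\cong \tilde H^{\,i-n+1}(\gm^{\wedge n},\mathbf{A})$. The problem is thus reduced to computing the reduced cohomology of the smash power $\gm^{\wedge n}$, and this is where the contraction construction enters. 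The key input is Morel's cohomological contraction splitting: for any strictly $\aone$-invariant $\mathbf{A}$ and smooth $U$ one has $H^j(\gm\times U,\mathbf{A})\cong H^j(U,\mathbf{A})\oplus H^j(U,\mathbf{A}_{-1})$, the second summand being exactly $\tilde H^j(\gm\wedge U_+,\mathbf{A})$, and for $j=0$ recovering the defining formula for $\mathbf{A}_{-1}$. Iterating this $n$ times---legitimate since contraction preserves strict $\aone$-invariance by \cite[Lemma 7.33]{MField}---gives $\tilde H^j(\gm^{\wedge n},\mathbf{A})\cong H^j(\Spec k,\mathbf{A}_{-n})$. As a field has Nisnevich cohomological dimension zero, this group is $\mathbf{A}_{-n}(k)$ when $j=0$ and vanishes for $j>0$ (and trivially for $j<0$).

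Assembling the pieces with $j=i-n+1$, the reduced cohomology $\tilde H^i(Q_{2n-1},\mathbf{A})$ equals $\mathbf{A}_{-n}(k)$ precisely when $i=n-1$ and vanishes otherwise; combining with $H^i(\Spec k,\mathbf{A})$, which contributes $\mathbf{A}(k)$ in degree $0$ and nothing in positive degrees, yields exactly the three cases claimed---here the hypothesis $n\geq 2$ is used so that $n-1\geq 1$ and the two nonzero contributions land in distinct cohomological degrees. I expect the main obstacle to be the third ingredient, namely justifying the $\gm$-reduced cohomology identification $\tilde H^{*}(\gm^{\wedge n},\mathbf{A})\cong H^{*}(\Spec k,\mathbf{A}_{-n})$ at the level of cohomology groups. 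This is the cohomological analogue of the homotopy-sheaf statement of Theorem \ref{thm:contractions}, and the cleanest route is to invoke Morel's splitting of $H^{*}(\gm\times U,\mathbf{A})$ and induct on $n$ rather than re-deriving the identity from scratch.
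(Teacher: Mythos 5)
Your proposal is correct and rests on the same pillars as the paper's own proof: $\aone$-locality of the Eilenberg--Mac Lane spaces $K(\mathbf{A},i)$ (so Nisnevich cohomology is computed by $\aone$-homotopy classes), the identification $Q_{2n-1}\simeq \Sigma^{n-1}_s\gm^{\wedge n}$, and Morel's contraction machinery; the difference is one of packaging. The paper treats $i=0$ geometrically ($H^0(\A^n\setminus 0,\mathbf{A})=\mathbf{A}(k)$ because $0\in\A^n$ has codimension $\geq 2$ and $\mathbf{A}$ is unramified and $\aone$-invariant) and, for $i\geq 1$, reads off $[\Sigma^{n-1}_s\gm^{\wedge n},K(\mathbf{A},i)]_{\aone}=\bpi_{n-1,n}^{\aone}(K(\mathbf{A},i))(k)=\bpi_{n-1}^{\aone}(K(\mathbf{A},i))_{-n}(k)$ directly from Theorem \ref{thm:contractions}, whereas you unwind that instance by hand (base-point splitting, simplicial suspension, iterated $\gm$-contraction); both treatments of $i=0$ are fine. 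The one step you should tighten is the iteration: the splitting $H^j(\gm\times U,\mathbf{A})\cong H^j(U,\mathbf{A})\oplus H^j(U,\mathbf{A}_{-1})$ is a statement about smooth schemes $U$, but after the first peeling you must apply it with $U$ replaced by $\gm^{\wedge m}$, which is \emph{not} a scheme, so "iterating $n$ times" is not literally licensed as written. The repair is to use the naturality of the splitting in $U$ to promote it to the sheaf-level statement ${\mathbf R}\Omega_{\gm}K(\mathbf{A},j)\simeq K(\mathbf{A}_{-1},j)$; then the adjunction identifying pointed classes $[\gm\wedge\mathscr{Y},K(\mathbf{A},j)]_{\aone}\cong[\mathscr{Y},{\mathbf R}\Omega_{\gm}K(\mathbf{A},j)]_{\aone}$ iterates over arbitrary pointed spaces $\mathscr{Y}$, in particular over smash powers of $\gm$. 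That promoted statement is exactly Theorem \ref{thm:contractions} applied to Eilenberg--Mac Lane spaces, i.e., the fact the paper cites; so your closing remark that your key identity is "the cohomological analogue" of Theorem \ref{thm:contractions} is precisely right --- it is not merely an analogue but an instance of that theorem, and citing it is the shortest way to close this gap.
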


\begin{proof}
Since $p_{2n-1}$ is an isomorphism in $\ho k$ and the Eilenberg-Mac Lane space $K({\mathbf A},i)$ is $\aone$-local for every integer $i \geq 0$, we have $H^i(Q_{2n-1},\mathbf{A})=[Q_{2n-1},K(\mathbf {A},i)]_{\aone}=[\mathbb A^n\setminus 0,K(\mathbf {A},i)]_{\aone}$. If $i=0$, we have $H^0(\mathbb A^n\setminus 0,\mathbf{A})=\mathbf{A}(k)$ since $0\in \mathbb A^n$ is of codimension $\geq 2$. If $i\geq 1$, we have
\[
[\mathbb A^n\setminus 0,K(\mathbf {A},i)]_{\aone}=[\Sigma^{n-1}\gm^{\wedge n},K(\mathbf {A},i)]_{\aone}=\bpi_{n-1,n}^{\aone}(K(\mathbf {A},i))(k)=\bpi_{n-1}^{\aone}(K(\mathbf {A},i))_{-n}(k)
\]
The latter is trivial if $i\neq n-1$ and equal to $\mathbf{A}_{-n}(k)$ if $i=n-1$.
\end{proof}

\subsubsection*{Completability of unimodular rows}
Consider the morphism $SL_n \to {\mathbb A}^n \setminus 0$ given by ``projection onto the first column."  If $X$ is a smooth affine scheme, then a unimodular row of length $n$ on $X$ is completable if there exists a morphism $X \to SL_n$ factoring the morphism $X \to {\mathbb A}^n \setminus 0$ that determines the row.  We saw above that the morphism $\hom_{\Sm_k}(X,{\mathbb A}^n \setminus 0) \to [X,{\mathbb A}^n \setminus 0]_{\aone}$ is surjective.  Likewise, the canonical morphism $\hom_{\Sm_k}(X,SL_n) \to [X,SL_n]_{\aone}$ is surjective because $Sing_*^{\aone}(SL_n)$ satisfies the so-called affine BG property \cite[Theorem 9.21]{MField}, and we have a commutative diagram of the form
\[
\xymatrix{
\hom_{\Sm_k}(X,SL_n) \ar[r]\ar[d] & \hom_{\Sm_k}(X,{\mathbb A}^n \setminus 0) \ar[d]\\
[X,SL_n]_{\aone} \ar[r]& [X,{\mathbb A}^n \setminus 0]_{\aone},
}
\]
where the horizontal morphisms are induced by the ``projection onto the first column."  The vertical morphisms are surjective because they are quotient morphisms corresponding to taking $E_n(\O_X(X))$-orbits.  Therefore to see if a unimodular row is completable, it suffices to show that the corresponding $\aone$-homotopy class lifts to $[X,SL_n]_{\aone}$.

To solve this lifting problem, observe that there is an $\aone$-fiber sequence of the form
\[
SL_n \longrightarrow {\mathbb A}^n \setminus 0 \longrightarrow BSL_{n-1} \longrightarrow BSL_n.
\]
Now Lemma \ref{lem:universaltorsor} together with the $\aone$-weak-equivalence $SL_n/SL_{n-1}\simeq \A^n\setminus 0$ shows that the map ${\mathbb A}^n \setminus 0 \longrightarrow BSL_{n-1}$ classifies the oriented vector bundle on $\A^n\setminus 0$ given by the kernel of the surjective homomorphism $(x_1,\ldots,x_n):\O_{\A^n\setminus 0}^n\to \O_{\A^n\setminus 0}$.
If we map $X_+$ into this $\aone$-fiber sequence, we obtain an exact sequence of the form
\[
\cdots \longrightarrow [X,SL_n]_{\aone} \longrightarrow [X,{\mathbb A}^n \setminus 0]_{\aone} \longrightarrow [X,BSL_{n-1}]_{\aone} \longrightarrow [X,BSL_n];
\]
here we consider free homotopy classes of maps, and the two pointed sets on the right are pointed by the trivial bundle of the appropriate rank on $X$ equipped with the identity trivialization of the determinant.

The map $[X,{\mathbb A}^n \setminus 0]_{\aone} \to [X,BSL_{n-1}]_{\aone}$ is obtained by sending a map $X \to {\mathbb A}^n \setminus 0$ to the pullback of the vector bundle described above.  Thus, we need information about the sets of free $\aone$-homotopy classes of maps $[X,BSL_{n-1}]_{\aone}$.  Since $BSL_{n-1}$ is $\aone$-simply connected, we can apply Lemma \ref{lem:basepoint} to deduce the following fact.

\begin{lem}
\label{lem:orientedvectorbundles}
If $(X,x)$ is a pointed smooth affine scheme, then for any integer $r$, there is a canonical bijection
\[
[(X,x),(BSL_r,\ast)]_{\aone} \isomto [X,BSL_r]_{\aone}.
\]
In particular, when $X = Q_{2n-1}$, we see that $\bpi_{n-1,n}^{\aone}(BSL_{n-1})(k) \isomt [Q_{2n-1},BSL_r]_{\aone}$.
\end{lem}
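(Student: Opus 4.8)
The plan is to deduce both assertions from the general change-of-basepoint result, Lemma \ref{lem:basepoint}, together with the fact that $BSL_r$ is $\aone$-$1$-connected. First I would recall why $BSL_r$ is $\aone$-simply connected: $SL_r$ is $\aone$-connected, and from the fiber sequence $SL_r \to ESL_r \to BSL_r$ with $ESL_r$ $\aone$-contractible one reads off $\bpi_0^{\aone}(BSL_r) = \ast$ and $\bpi_1^{\aone}(BSL_r) \cong \bpi_0^{\aone}(SL_r) = \ast$ (cf.\ Example \ref{ex:pi0ofGLn}). Hence Lemma \ref{lem:basepoint}, applied with $\mathscr{Y} = BSL_r$ and $\mathscr{X} = X$, immediately yields that the forgetful map
\[
[(X,x),(BSL_r,\ast)]_{\aone} \longrightarrow [X,BSL_r]_{\aone}
\]
is a bijection. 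This establishes the first assertion; note that affineness of $X$ plays no role here, since Lemma \ref{lem:basepoint} holds for arbitrary pointed spaces.

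For the second assertion I would take $X = Q_{2n-1}$, pointed by the rational point $(1,0,\ldots,0,1,0,\ldots,0)$, which lies on the quadric. Using the $\aone$-weak equivalence $Q_{2n-1} \simeq S^{n-1}_s \wedge \gm^{\wedge n}$ recalled in the introduction, one obtains the chain of identifications
\[
\bpi_{n-1,n}^{\aone}(BSL_{n-1})(k) = [S^{n-1}_s \wedge \gm^{\wedge n}, (BSL_{n-1},\ast)]_{\aone} = [Q_{2n-1},(BSL_{n-1},\ast)]_{\aone},
\]
where the first equality is the definition of $\bpi_{n-1,n}^{\aone}$ evaluated on sections over $\Spec k$ and the second is induced by the weak equivalence in $\ho{k}$. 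Applying the first part of the lemma to $X = Q_{2n-1}$ then converts these pointed homotopy classes into free ones, which gives the claimed bijection with $[Q_{2n-1},BSL_{n-1}]_{\aone}$.

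The one point requiring a little care is the passage from the sheaf $\bpi_{n-1,n}^{\aone}(BSL_{n-1})$ to the presheaf defining it: a priori the global sections of the Nisnevich sheafification could differ from the naive homotopy classes computed on $\Spec k$. However, since $k$ is a field, every Nisnevich cover of $\Spec k$ splits, so the sheafification functor leaves the value over $\Spec k$ unchanged; this is exactly what justifies the first equality above. Beyond this, no genuine obstacle arises, and the substantive input is simply the $\aone$-simple connectivity of $BSL_r$ feeding into Lemma \ref{lem:basepoint}.
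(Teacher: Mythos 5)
Your proposal is correct and follows essentially the same route as the paper: the paper deduces the lemma directly from Lemma \ref{lem:basepoint} using the $\aone$-simple connectedness of $BSL_r$ (established via the fiber sequence $SL_r \to ESL_r \to BSL_r$ and $\aone$-connectedness of $SL_r$), and then identifies $\bpi_{n-1,n}^{\aone}(BSL_{n-1})(k)$ with pointed classes out of $Q_{2n-1} \simeq S^{n-1}_s \wedge \gm^{\wedge n}$ exactly as you do. Your extra remark on why sheaf sections over $\Spec k$ agree with the presheaf value is a point the paper treats as definitional, but your justification is sound.
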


%%%%%%%%%%%%%%%%%%%%%%%%%%%%%%%%%%%%%%%%%%%%%%%%%%%%%%%%%%%%%%%%%%%%%%%%%%%%%%%%%%%%%%%%%%%%%%%%%%%%%%%%%%%%%%%%%

%\subsubsection*{General remarks on vector bundles}
%\begin{prop}
%If $X$ is a smooth $k$-scheme with $Pic(X) = 0$, then the canonical map
%\[
%[X,BSL_n]_{\aone} \longrightarrow [X,BGL_n]_{\aone}
%\]
%is surjective.
%\end{prop}

%\begin{proof}
%Mapping $X_+$ into the $\aone$-fiber sequence $BSL_n \to BGL_n \to B\gm$ yields an exact sequence of pointed sets of the form
%\[
%\longrightarrow [X,\gm]_{\aone} \longrightarrow [X,BSL_n]_{\aone} \longrightarrow [X,BGL_n]_{\aone} \longrightarrow [X,B\gm]_{\aone}.
%\]
%Since $Pic(X)$ is trivial, it follows that $[X,BSL_n]_{\aone} \to [X,BGL_n]_{\aone}$ is surjective.
%\end{proof}

%\begin{prop}
%If $X$ is a pointed smooth affine $k$-scheme that is isomorphic in $\ho{k}$ to a scheme of the form $\Sigma^1_s X$, then every vector bundle of rank $n$ on $X$ admits a unique orientation.
%\end{prop}

%\begin{proof}
%In the exact sequence of the previous proposition, the fact that $X \cong \Sigma^1_s Y$ allows us to
%\end{proof}

\subsubsection*{Vector bundles of large rank}
\begin{cor}\label{cor:mgeqn}
If $n \geq 1$ is any integer, any vector bundle $E$ of rank $m\geq n$ over $Q_{2n-1}$ is free.
\end{cor}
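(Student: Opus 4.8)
The plan is to reduce the assertion to the triviality of a set of oriented bundles, and then to prove that triviality by climbing the $\aone$-Postnikov tower of $BSL_m$, checking that every obstruction and every indeterminacy group vanishes.

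First I would dispose of $n=1$: here $Q_1 \cong \gm = \Spec k[x,x^{-1}]$ is the spectrum of a principal ideal domain, so every vector bundle of rank $\geq 1$ is free and there is nothing to prove. For $n \geq 2$ I would record that $\Pic(Q_{2n-1}) = 0$; this follows because $Q_{2n-1}$ is $\aone$-weakly equivalent to $\A^n \setminus 0$, whose Picard group is trivial for $n \geq 2$ (compare the computation in Example \ref{ex:gm}). Consequently every rank $m$ bundle $E$ can be equipped with a trivialization of $\det E$, so the forgetful map ${\mathscr V}^o_m(Q_{2n-1}) \to {\mathscr V}_m(Q_{2n-1})$ is surjective and it suffices to show that every oriented bundle of rank $m$ is trivial. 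By Theorem \ref{thm:orientedvectorbundles} this means proving $[Q_{2n-1},BSL_m]_{\aone} = \ast$; since $BSL_m$ is $\aone$-$1$-connected, Lemma \ref{lem:basepoint} lets me compute these classes pointedly.

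The heart of the argument is obstruction theory for maps $Q_{2n-1} \to BSL_m$ along the $\aone$-Postnikov tower (see \cite[\S 2.4]{AsokPi1}). Because $BSL_m$ is $\aone$-$1$-connected, its homotopy sheaves in degrees $\geq 2$ are strictly $\aone$-invariant, so Lemma \ref{lem:cohomology} applies and the cohomology of $Q_{2n-1}$ with such coefficients is concentrated in degrees $0$ and $n-1$, with $H^{n-1}(Q_{2n-1},{\mathbf A}) \cong {\mathbf A}_{-n}(k)$. Passing from the $(j-1)$-st to the $j$-th stage of the tower, the obstruction to lifting lives in $H^{j+1}(Q_{2n-1},\bpi_j^{\aone}(BSL_m))$ and the set of lifts, when non-empty, is controlled by $H^{j}(Q_{2n-1},\bpi_j^{\aone}(BSL_m))$. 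By the concentration of cohomology just noted, the only such groups that can be non-zero are the indeterminacy group $H^{n-1}(Q_{2n-1},\bpi_{n-1}^{\aone}(BSL_m))$ (at stage $j=n-1$) and the obstruction group $H^{n-1}(Q_{2n-1},\bpi_{n-2}^{\aone}(BSL_m))$ (at stage $j=n-2$); all lifts beyond stage $n-1$ exist and are unique, so the tower converges there and it remains only to annihilate these two groups.

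This is where the hypothesis $m \geq n$ enters decisively, forcing both sheaves into the stable range. Using $\bpi_i^{\aone}(BSL_m) \cong \bpi_{i-1}^{\aone}(SL_m)$ together with Theorem \ref{thm:stabilizationsln}, for $m \geq n$ one has $\bpi_{n-1}^{\aone}(BSL_m) \cong \K^Q_{n-1}$ and $\bpi_{n-2}^{\aone}(BSL_m) \cong \K^Q_{n-2}$ (these sheaves being simply $0$ in the few small cases where the stabilization index drops to $0$). Applying $H^{n-1}(Q_{2n-1},-) \cong (-)_{-n}(k)$ and Lemma \ref{lem:contractionsofquillenandmilnor} gives $(\K^Q_{n-1})_{-n} = 0$ and $(\K^Q_{n-2})_{-n} = 0$, since in each case the number of contractions exceeds the degree; hence both groups vanish and $[Q_{2n-1},BSL_m]_{\aone} = \ast$. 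The genuine obstacle here is conceptual rather than computational: although $\dim Q_{2n-1} = 2n-1$ is far too large for the classical splitting and cancellation theorems to force freeness at rank as low as $m = n$, the effective cohomological dimension of $Q_{2n-1}$ for strictly $\aone$-invariant coefficients is only $n-1$ by Lemma \ref{lem:cohomology}, and the crux is to get the Postnikov bookkeeping right so that precisely $\bpi_{n-2}^{\aone}$ and $\bpi_{n-1}^{\aone}$ appear and $m \geq n$ is exactly the inequality placing both in the stable range.
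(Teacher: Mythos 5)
Your proof is correct, but it organizes the middle of the argument differently from the paper. The opening and closing are the same in both: reduce to oriented bundles via $\Pic(Q_{2n-1})=0$ and Theorem \ref{thm:orientedvectorbundles}, and finish by observing that $m \geq n$ places the relevant homotopy sheaves in the stable range (Theorem \ref{thm:stabilizationsln}), where they become Quillen K-theory sheaves whose $n$-fold contractions vanish by Lemma \ref{lem:contractionsofquillenandmilnor}. The difference is in how the mapping set $[Q_{2n-1},BSL_m]_{\aone}$ gets identified with sections of a contracted homotopy sheaf. The paper takes a shortcut that avoids obstruction theory entirely: since $Q_{2n-1} \simeq \Sigma^{n-1}_s \gm^{\wedge n}$ is itself a motivic sphere, Lemma \ref{lem:orientedvectorbundles} identifies $[Q_{2n-1},BSL_m]_{\aone}$ directly with $\bpi_{n-1,n}^{\aone}(BSL_m)(k) \cong \bpi_{n-1}^{\aone}(BSL_m)_{-n}(k)$, so only the single group $(\K^Q_{n-1})_{-n}(k)=0$ needs to be computed. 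You instead treat $Q_{2n-1}$ as a general space with cohomology concentrated in degrees $0$ and $n-1$ (Lemma \ref{lem:cohomology}) and climb the Postnikov tower, which leaves you two groups to kill. One small economy you missed: when the goal is to show that the mapping set into each Postnikov stage is a single point, the obstruction group $H^{n-1}(Q_{2n-1},\bpi_{n-2}^{\aone}(BSL_m))$ at stage $n-2$ is superfluous --- if the set of classes into stage $n-3$ is already a point, every map into stage $n-2$ lies in the fiber over the trivial class, and the trivial class always lifts; only the indeterminacy groups $H^{j}(Q_{2n-1},\bpi_j^{\aone}(BSL_m))$ matter. Since that extra group also vanishes, your argument goes through. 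What your route buys is generality: it is exactly the machinery the paper deploys for Proposition \ref{prop:degreehomomorphism} and the critical-rank Theorems \ref{thm:bundleseven} and \ref{thm:bundlesodd}, and it would apply to any smooth affine scheme whose cohomology with strictly $\aone$-invariant coefficients is concentrated in degrees $\leq n-1$, not just to a motivic sphere; what the paper's route buys is brevity, at the cost of leaning on the suspension structure of $Q_{2n-1}$ --- though note that Lemma \ref{lem:cohomology}, which you rely on, is itself proved using that same suspension identification, so the underlying inputs coincide.
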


\begin{proof}
If $n = 1$, then $Q_{2n-1} \cong \gm$ and the result is clear since all vector bundles on $\gm$ are free.  Therefore, assume $n \geq 2$.  Then, the $\aone$-weak equivalence $Q_{2n-1} \to {\mathbb A}^n \setminus 0$ shows that $Pic(Q_{2n-1})$ is trivial.  Therefore, any vector bundle on $Q_{2n-1}$ has trivial determinant, and we can fix arbitrarily a trivialization of the determinant to get an element of $[Q_{2n-1},BSL_m]_{\aone}$. We are thus left to prove that this set is trivial. As we observed above in Lemma \ref{lem:orientedvectorbundles}, the canonical map
\[
\bpi_{n-1,n}^{\aone}(BSL_m)(k) \longrightarrow [Q_{2n-1},BSL_m]_{\aone}
\]
is a bijection.

By \cite[Theorem 6.13]{MField}, we have identifications
\[
\bpi_{n-1,n}^{\aone}(BSL_m)(k) \cong \bpi_{n-1}^{\aone}(BSL_m)_{-n}(k).
\]
If $m \geq n$, then $\bpi_{n-1}^{\aone}(BSL_m)_{-n} = (\K^Q_{n-1})_{-n}$ by the stabilization results of Morel (see Theorem \ref{thm:stabilizationsln}).  Since $(\K^Q_{n-1})_{-n} = 0$, it follows that $[Q_{2n-1},BSL_m]_{\aone}=*$.  Consequently, the set of vector bundles of rank $m$ also consists of a single element.
\end{proof}

\subsubsection*{Vector bundles of critical rank I: the case $n$ even}
We now study vector bundles of rank $n-1$ on $Q_{2n-1}$ under the additional assumption that $n$ is even; we begin by describing oriented vector bundles.

\begin{thm}\label{thm:bundleseven}
If $n\geq 2$ is an even integer, then there is a bijection between the set of isomorphism classes of oriented rank $(n-1)$ vector bundles on $Q_{2n-1}$ and the group $\Z/(n-1)!$.  Moreover, each isomorphism class admits a representative given by the unimodular row $(x_1^m,x_2,\ldots,x_n)$ for $1\leq m\leq (n-1)!$.
\end{thm}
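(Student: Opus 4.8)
The plan is to identify $\mathscr{V}^{o}_{n-1}(Q_{2n-1})$ as an $n$-fold contraction of the first non-stable homotopy sheaf computed in Theorem \ref{thm:homotopysheafevencase}, and then to pin down the stated representatives via an $\aone$-degree calculation. For the bijection, I would start from Theorem \ref{thm:orientedvectorbundles} and Lemma \ref{lem:orientedvectorbundles}, which give
\[
\mathscr{V}^{o}_{n-1}(Q_{2n-1}) = [Q_{2n-1},BSL_{n-1}]_{\aone} \cong \bpi_{n-1,n}^{\aone}(BSL_{n-1})(k) \cong \left(\bpi_{n-1}^{\aone}(BSL_{n-1})\right)_{-n}(k),
\]
the last step by Theorem \ref{thm:contractions}. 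Writing $n=2m$, Theorem \ref{thm:homotopysheafevencase} provides a short exact sequence $0 \to \mathbf{S}_{2m} \to \bpi_{2m-1}^{\aone}(BSL_{2m-1}) \to \K^Q_{2m-1} \to 0$. Since contraction is exact and $(\K^Q_{2m-1})_{-2m}=0$ by Lemma \ref{lem:contractionsofquillenandmilnor}, contracting $n=2m$ times yields $\left(\bpi_{n-1}^{\aone}(BSL_{n-1})\right)_{-n} \cong (\mathbf{S}_{2m})_{-2m}$. Finally, Corollary \ref{cor:estimate} makes the epimorphism $\K^M_{2m}/(2m-1)! \to \mathbf{S}_{2m}$ an isomorphism after $\geq 2m-2$ contractions, hence after $2m$; combined with the contraction formula for $\K^M$ this gives $(\mathbf{S}_{2m})_{-2m} \cong \K^M_0/(2m-1)! \cong \Z/(n-1)!$, and taking sections over $k$ establishes the bijection.

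For the representatives, I would use the fiber sequence $\A^n\setminus 0 = SL_n/SL_{n-1} \to BSL_{n-1} \to BSL_n$. The row $r_\ell := (x_1^\ell,x_2,\ldots,x_n)$ is unimodular on $Q_{2n-1}$ and defines a morphism $Q_{2n-1} \to \A^n\setminus 0$; postcomposing with $\A^n\setminus 0 \to BSL_{n-1}$ returns exactly the kernel bundle of $r_\ell$, i.e. the oriented bundle the row represents. The induced map $[Q_{2n-1},\A^n\setminus 0]_{\aone} \to [Q_{2n-1},BSL_{n-1}]_{\aone}$ is surjective because the next term $[Q_{2n-1},BSL_n]_{\aone}$ is trivial by Corollary \ref{cor:mgeqn}. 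By Corollary \ref{cor:GWdegree} the source is $GW(k) = \K^{MW}_0(k)$, and under the identifications above this map is the $n$-fold contraction of the connecting homomorphism $\delta_{n-1}\colon \K^{MW}_n \to \bpi_{n-2}^{\aone}(SL_{n-1})$.

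It then remains to evaluate this contracted map and the class of $r_\ell$. Using $q_{n-1}\circ\delta_n=\eta$ (Lemma \ref{lem:eulerclasscomputation}) together with Proposition \ref{prop:milnorwittexactsequences}, the kernel of $\delta_{n-1}$ contains $\ker(\K^{MW}_n \to \K^M_n)$, so $\delta_{n-1}$ factors as $\K^{MW}_n \to \K^M_n \to \mathbf{S}_n$ with the second arrow the cokernel projection; after $n$-fold contraction this becomes the rank map $GW(k)\to\Z$ followed by reduction $\Z \to \Z/(n-1)!$. On the other hand, $r_\ell$ is the composite of the weak equivalence $Q_{2n-1}\simeq \A^n\setminus 0$ with the self-map $(u_1,\ldots,u_n)\mapsto(u_1^\ell,u_2,\ldots,u_n)$, which acts on $\bpi_{n-1}^{\aone}(\A^n\setminus 0)=\K^{MW}_n$ by multiplication by $\ell_\epsilon = \sum_{i=0}^{\ell-1}\langle(-1)^i\rangle$ (Morel's relation $[a^\ell]=\ell_\epsilon[a]$); thus $r_\ell$ corresponds to $\ell_\epsilon\in GW(k)$, of rank $\ell$. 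Hence $r_\ell$ maps to $\ell \bmod (n-1)!$, and as $\ell$ runs through $1,\ldots,(n-1)!$ this exhausts $\Z/(n-1)!$, so every isomorphism class is represented.

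The main obstacle is the third step: correctly identifying the $n$-fold contraction of $\delta_{n-1}$ with ``rank followed by reduction.'' This hinges on the precise factorization of the connecting map through $\K^M_n$ supplied by the Euler-class computation of Lemma \ref{lem:eulerclasscomputation}, and on matching the $\aone$-degree of the $\ell$-th power self-map with Morel's formula for $[a^\ell]$; the bookkeeping of indices in the contraction and the compatibility of the two degree conventions is where care is most needed.
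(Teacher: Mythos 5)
Your proposal is correct and follows essentially the same route as the paper: the identification $\mathscr{V}^o_{n-1}(Q_{2n-1}) \cong (\mathbf{S}_n)_{-n}(k) \cong \Z/(n-1)!$ via Theorem \ref{thm:homotopysheafevencase} (packaged in the paper as Proposition \ref{prop:tatehomotopysheavesofgln}) and Corollary \ref{cor:estimate}, then surjectivity of $[Q_{2n-1},\A^n\setminus 0]_{\aone} \to [Q_{2n-1},BSL_{n-1}]_{\aone}$ from Corollary \ref{cor:mgeqn}, and finally a degree computation for the rows $(x_1^m,x_2,\ldots,x_n)$. The only cosmetic difference is that you evaluate the class of the row in $GW(k)$ via Morel's relation $[a^\ell]=\ell_\epsilon[a]$ and then take the rank, whereas the paper computes directly in $K^M_n(\gm^{\wedge n})=\Z$ via $\{x_1^m,x_2,\ldots,x_n\}=m\{x_1,\ldots,x_n\}$; your justification of the factorization of $\delta_{n-1}$ through $\K^M_n \to \mathbf{S}_n$ is exactly the paper's argument from the subsection on factoring the connecting homomorphism.
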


\begin{proof}
By Lemma \ref{lem:orientedvectorbundles} there is a bijection
\[
\bpi_{n-1,n}^{\aone}(BSL_{n-1})(k) \isomto [Q_{2n-1},BSL_{n-1}]_{\aone}.
\]
The former sheaf is described in Proposition \ref{prop:tatehomotopysheavesofgln}.

Since $n$ is even, $n-1$ is odd, and we have a short exact sequence of the form
\[
0 \longrightarrow ({\mathbf S}_n)_{-n}(k) \longrightarrow \bpi_{n-1}^{\aone}(BSL_{n-1})_{-n}(k) \longrightarrow (\K^Q_{n-1})_{-n}(k) \longrightarrow 0.
\]
Since $(\K^Q_{n-1})_{-n} = 0$, it follows that there is a bijection $[Q_{2n-1},BSL_{n-1}]_{\aone} \isomt ({\mathbf S}_n)_{-n}(k)$.  Corollary \ref{cor:estimate} yields an identification $({\bf S}_n)_{-n}(k)=\Z/(n-1)!$.

Next, we give explicit descriptions of all the non-isomorphic rank $n-1$ vector bundles whose existence is guaranteed by the preceding discussion.  To this end, we consider the $\aone$-fiber sequence
\[
\A^n\setminus 0 \longrightarrow BSL_{n-1} \longrightarrow BSL_n.
\]
Mapping $(Q_{2n-1})_+$ into this fiber sequence yields an exact sequence of (groups and) pointed sets
\[
[Q_{2n-1},\A^n\setminus 0]_{\aone} \longrightarrow [Q_{2n-1},BSL_{n-1}]_{\aone} \longrightarrow [Q_{2n-1},BSL_n]_{\aone}
\]
where the first map is obtained by pulling-back the universal rank $n-1$ stable free bundle on $\A^n\setminus 0$.

Corollary \ref{cor:mgeqn} states that $[Q_{2n-1},BSL_n]_{\aone}=*$ so therefore any of the vector bundles just considered becomes trivial upon adding a free rank $1$ summand.  In other words, all such vector bundles are given by unimodular rows.  By the previous paragraphs, we know that $[Q_{2n-1},BSL_{n-1}]_{\aone}\simeq\bpi_{n-1,n}^{\aone}(BSL_{n-1})(k) \cong ({\mathbf S}_n)_{-n}(k)$. Moreover, $[Q_{2n-1},\A^n\setminus 0]_{\aone}\simeq\bpi_{n-1,n}^{\aone}(\A^n\setminus 0)(k) \cong K_0^{\MW}(k)$. Under these identifications, it is clear that the morphism
\[
[Q_{2n-1},\A^n\setminus 0]_{\aone} \longrightarrow [Q_{2n-1},BSL_{n-1}]_{\aone}
\]
corresponds to the projection $K_0^{\MW}(k)\to K_0^M(k)\to ({\mathbf S}_n)_{-n}(k)=\Z/(n-1)!$. Consider next the class of the unimodular row $(x_1^m,x_2,\ldots,x_n)$ in $[Q_{2n-1},\A^n\setminus 0]_{\aone}$ for any $m\in\N$. Under the weak-equivalence $Q_{2n-1}\simeq \A^n\setminus 0\simeq \Sigma^{n-1}\gm^{\wedge n}$, this class corresponds to the morphism $l:\gm^{\wedge n}\to \gm^{\wedge n}$ defined by $l(x_1,\ldots,x_n)=(x_1^m,x_2,\ldots,x_n)$. As $K_n^M(\gm^{\wedge n})=\Z$ is generated by the class of the symbol $\{x_1,\ldots,x_n\}$ and $\{x_1^m,x_2,\ldots,x_n\}=m\{x_1,\ldots,x_n\}$, it follows that the unimodular rows $(x_1^m,x_2,\ldots,x_m)$ for $1\leq m\leq (n-1)!$ give all the isomorphism classes of oriented vector bundles of rank $n-1$ on $Q_{2n-1}$.
\end{proof}

\begin{cor}\label{cor:nonorientedbundleseven}
If $n\geq 2$ is an even integer, then the set of isomorphism classes of rank $n-1$ vector bundles on $Q_{2n-1}$ has $(n-1)!$ elements.  Moreover, each isomorphism class admits a representative given by the unimodular row $(x_1^m,x_2,\ldots,x_n)$ for $1\leq m\leq (n-1)!$.
\end{cor}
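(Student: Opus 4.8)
The plan is to deduce this from the classification of oriented bundles in Theorem \ref{thm:bundleseven} by analysing the forgetful map
\[
\mathscr{V}^o_{n-1}(Q_{2n-1}) \longrightarrow \mathscr{V}_{n-1}(Q_{2n-1})
\]
that forgets the chosen trivialisation of the determinant. First I would observe that, by the argument in the proof of Corollary \ref{cor:mgeqn}, $\Pic(Q_{2n-1}) = 0$, so every rank $n-1$ bundle on $Q_{2n-1}$ has trivial determinant and therefore admits an orientation; consequently the forgetful map is surjective. Combined with the explicit representatives furnished by Theorem \ref{thm:bundleseven}, this already shows that every isomorphism class of rank $n-1$ bundles is represented by some $(x_1^m,x_2,\ldots,x_n)$. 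It remains to prove that the forgetful map is injective, i.e.\ that the $(n-1)!$ classes remain pairwise non-isomorphic after forgetting orientations.

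For injectivity I would identify the fibres of the forgetful map with orbits of a ``change of orientation'' action. Two orientations of a fixed bundle $E$ with $\det E \cong \O$ differ by a global unit, so the set of orientations of $E$ is a torsor under $\O^*(Q_{2n-1})$, and two oriented bundles become isomorphic after forgetting orientations precisely when their underlying bundles are isomorphic, i.e.\ when they lie in a common orbit of the resulting action of $\O^*(Q_{2n-1})$ on $\mathscr{V}^o_{n-1}(Q_{2n-1})$. Here I would compute $\O^*(Q_{2n-1}) = k^\times$: since $Q_{2n-1} \simeq \A^n \setminus 0$ in $\ho{k}$ and $\gm$ is $\aone$-local (Example \ref{ex:gm}), one has $\O^*(Q_{2n-1}) = [Q_{2n-1},\gm]_{\aone} = [\A^n\setminus 0,\gm]_{\aone} = \O^*(\A^n\setminus 0) = k^\times$ for $n \geq 2$.

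The crux is then to show that this $k^\times$-action is trivial. Rescaling the orientation of $E_m = \ker((x_1^m,x_2,\ldots,x_n))$ by $\lambda \in k^\times$ can be realised by right-multiplying the unimodular row by $\mathrm{diag}(\lambda,1,\ldots,1)$, i.e.\ by replacing $(x_1^m,x_2,\ldots,x_n)$ with $(\lambda x_1^m,x_2,\ldots,x_n)$. I would then re-run the computation at the end of the proof of Theorem \ref{thm:bundleseven}: under the $\aone$-weak equivalence $Q_{2n-1}\simeq \Sigma^{n-1}\gm^{\wedge n}$ and the identification of $\mathscr{V}^o_{n-1}(Q_{2n-1})$ with $\Z/(n-1)!$, the class of this row is read off from its image in $K^M_n(\gm^{\wedge n}) = \Z$, where $\{\lambda x_1^m,x_2,\ldots,x_n\} = \{\lambda,x_2,\ldots,x_n\} + m\{x_1,\ldots,x_n\} = m\{x_1,\ldots,x_n\}$ because the symbol $\{\lambda,x_2,\ldots,x_n\}$, being independent of $x_1$, vanishes in the reduced Milnor K-theory of $\gm^{\wedge n}$. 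Hence the class is unchanged, the action is trivial, and the forgetful map is a bijection; counting yields $(n-1)!$ isomorphism classes with the asserted representatives.

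The main obstacle is precisely this last triviality statement, and it is exactly the point at which the hypothesis that $n$ is even is used: the identification of Theorem \ref{thm:bundleseven} factors through $K^M_n$ (equivalently, through the rank map $K_0^{\MW}(k) \to K_0^M(k)$ on Morel's $\aone$-degree), so the rank-one form $\langle\lambda\rangle$ introduced by rescaling is killed. When $n$ is odd the corresponding identification retains the Witt-theoretic factor coming from ${\mathbf I}^n$, the contribution of $\langle\lambda\rangle$ survives, and the action is genuinely non-trivial — which is why for odd $n$ a bundle can carry inequivalent orientations. I would therefore phrase the computation so that the vanishing of $\{\lambda,x_2,\ldots,x_n\}$ is invoked only after reducing into $K^M_n$, carefully mirroring the even-case argument of Theorem \ref{thm:bundleseven}.
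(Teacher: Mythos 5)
Your proof is correct and takes essentially the same approach as the paper's: the paper phrases your unit-rescaling action as injectivity of the surjection $Um_n(Q_{2n-1})/SL_n(Q_{2n-1}) \to Um_n(Q_{2n-1})/GL_n(Q_{2n-1})$ (using Corollary \ref{cor:mgeqn} to identify rank $n-1$ bundles with the latter orbit set), and it kills the $k^\times$-action by the identical computation $\{\alpha x_1^m,x_2,\ldots,x_n\}=\{\alpha,x_2,\ldots,x_n\}+\{x_1^m,x_2,\ldots,x_n\}=\{x_1^m,x_2,\ldots,x_n\}$ in $K_n^M(\gm^{\wedge n})$.
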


\begin{proof}
By Corollary \ref{cor:mgeqn}, we know that any vector bundle of rank $n$ on $Q_{2n-1}$ is free. It follows that the set of isomorphism classes of vector bundles of rank $n-1$ is given by the orbit set $Um_n(Q_{2n-1})/GL_n(Q_{2n-1})$. On the other hand, Theorem \ref{thm:bundleseven} yields $Um(Q_{2n-1})/SL_n(Q_{2n-1})=\Z/(n-1)!$ with explicit generators. To prove the result, it suffices thus to show that the surjective map $Um_n(Q_{2n-1})/SL_n(Q_{2n-1})\to Um_n(Q_{2n-1})/GL_n(Q_{2n-1})$ is injective. This amounts to show that for any unit $\alpha\in A_{2n-1}^\times=k^\times$ and any $m\in\N$ the classes of the unimodular rows $(x_1^m,x_2,\ldots,x_n)$ and $(\alpha x_1^m,x_2,\ldots,x_n)$ coincide in $Um_n(Q_{2n-1})/SL_n(Q_{2n-1})$. This is clear since we have
\[
\{\alpha x_1^m,x_2,\ldots,x_n\}=\{\alpha,x_2,\ldots,x_n\}+\{x_1^m,x_2,\ldots,x_n\}=\{x_1^m,x_2,\ldots,x_n\}
\]
in $K_n^M(\gm^{\wedge n})$.
\end{proof}

\subsubsection*{Vector bundles of critical rank II: the case $n$ odd}
We now study isomorphism classes of oriented rank $n-1$ vector bundles on $Q_{2n-1}$ when $n$ is odd.  In that case, recall from section \ref{section:metastableSL2n} that there is an exact sequence of the form
\[
\xymatrix{0\ar[r] & {\bf T}_{n}\ar[r] & \piaone_{n-1}(BSL_{n-1})\ar[r] & \K_{n-1}^Q\ar[r] & 0}
\]
where ${\bf T}_{n}$ is the image of the sheaf $\K_{n}^{MW}=\piaone_{n-1}(\A^{n}\setminus 0)$ in $ \piaone_{n-1}(BSL_{n-1})$ under the morphism of sheaves induced by the morphism of spaces $\A^{n}\setminus 0\to BSL_{n-1}$.

\begin{thm}
\label{thm:bundlesodd}
If $n\geq 3$ is an odd integer, then there is a bijection between the set of isomorphism classes of oriented rank $n-1$ vector bundles on $Q_{2n-1}$ and $\Z/(n-1)!\times_{\Z/2} W(k)$.  Moreover, the set of unimodular rows $\{(\alpha x_1,x_2,\ldots,x_n)\vert \alpha\in k^\times\}$ together with the rows $(x_1^{2m},x_2,\ldots,x_n)$ for $1\leq m\leq (n-1)!/2$ give representatives of the isomorphism classes of oriented vector bundles on $Q_{2n-1}$.
\end{thm}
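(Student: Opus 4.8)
The plan is to mirror, step for step, the even-case argument of Theorem~\ref{thm:bundleseven}, the one new feature being that the relevant contracted sheaf now carries a genuine quadratic-form contribution. First I would invoke Lemma~\ref{lem:orientedvectorbundles} to identify the set of oriented rank $n-1$ bundles on $Q_{2n-1}$ with $\bpi_{n-1,n}^{\aone}(BSL_{n-1})(k) = \bpi_{n-1}^{\aone}(BSL_{n-1})_{-n}(k)$. Since $n$ is odd, $n-1$ is even, so the relevant homotopy sheaf sits in the short exact sequence
\[
0 \longrightarrow {\mathbf T}_n \longrightarrow \bpi_{n-1}^{\aone}(BSL_{n-1}) \longrightarrow \K^Q_{n-1} \longrightarrow 0
\]
recalled just before the statement. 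Contracting $n$ times (contraction is exact) and using $(\K^Q_{n-1})_{-n} = 0$, which holds because $n > n-1$ by Lemma~\ref{lem:contractionsofquillenandmilnor}, collapses this to a canonical isomorphism $\bpi_{n-1}^{\aone}(BSL_{n-1})_{-n} \cong ({\mathbf T}_n)_{-n}$. So everything reduces to computing $({\mathbf T}_n)_{-n}(k)$.

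Second, I would compute $({\mathbf T}_n)_{-n}$ from the cartesian square defining ${\mathbf T}_n$. As contraction is an exact, additive functor it carries that square to a cartesian square, so $({\mathbf T}_n)_{-n}$ is the fibre product of $({\mathbf S}_n)_{-n}$ and $({\mathbf I}^n)_{-n}$ over $(\K^M_n/2)_{-n}$. Now Corollary~\ref{cor:estimate} gives $({\mathbf S}_n)_{-n}(k) = \Z/(n-1)!$ (here $n \geq n-2$, so the epimorphism of that corollary becomes an isomorphism after $n$-fold contraction), Proposition~\ref{prop:contractionsofMW} gives $({\mathbf I}^n)_{-n} = {\mathbf I}^0$, whence $({\mathbf I}^n)_{-n}(k) = W(k)$, and $(\K^M_n/2)_{-n}(k) = \Z/2$. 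The two structure maps contract to the reduction $\Z/(n-1)! \to \Z/2$, which makes sense since $(n-1)!$ is even for $n \geq 3$, and to the rank homomorphism $W(k) \to \Z/2$ coming from Milnor's map ${\mathbf I}^0 \to \K^M_0/2$. This yields $({\mathbf T}_n)_{-n}(k) \cong \Z/(n-1)! \times_{\Z/2} W(k)$, the asserted group.

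Third, for the explicit representatives I would use the $\aone$-fibre sequence $\A^n\setminus 0 \to BSL_{n-1} \to BSL_n$. Mapping $(Q_{2n-1})_+$ into it and invoking Corollary~\ref{cor:mgeqn}, which gives $[Q_{2n-1},BSL_n]_{\aone} = \ast$, shows that the connecting map $\Theta\colon [Q_{2n-1},\A^n\setminus 0]_{\aone} \to [Q_{2n-1},BSL_{n-1}]_{\aone}$ is surjective, so every oriented rank $n-1$ bundle is represented by a unimodular row. I would then identify $[Q_{2n-1},\A^n\setminus 0]_{\aone} \cong (\K^{MW}_n)_{-n}(k) = K_0^{MW}(k) = GW(k)$ (using Corollary~\ref{cor:GWdegree} and Lemma~\ref{lem:cohomology}) and, via Step~2 together with the description of ${\mathbf T}_n$ as the image of $\K^{MW}_n$, identify $\Theta$ with the natural quotient $GW(k) \to \Z/(n-1)! \times_{\Z/2} W(k)$. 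Finally I would compute the Grothendieck--Witt degrees of the two distinguished families exactly as the Milnor symbol was computed in Theorem~\ref{thm:bundleseven}: the scaling row $(\alpha x_1,x_2,\ldots,x_n)$ is the projection $Q_{2n-1}\to\A^n\setminus 0$ followed by the linear automorphism of determinant $\alpha$, hence has degree $\langle\alpha\rangle$ and feeds the Witt factor, while $(x_1^{2m},x_2,\ldots,x_n)$ has degree equal to the $m$-fold hyperbolic class $mh$, $h=\langle 1\rangle+\langle -1\rangle$, hence maps to $(2m \bmod (n-1)!,0)$ and realizes the $\Z/(n-1)!$-factor as in the even case.

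The main obstacle is this last step. The even-case bookkeeping only had to track a single symbol in $\K^M_n(\gm^{\wedge n})=\Z$, whereas here the degree computation must be carried out in $GW(k)$ so as to control both the Milnor (i.e.\ $\K^M_n$) and the quadratic (i.e.\ ${\mathbf I}^n$) components simultaneously, along with the $\Z/2$-compatibility that glues the two projections of the fibre product. Concretely, the delicate point is to verify that the scaling rows realize the generators $\langle\alpha\rangle$ of $W(k)$ and the even powers realize the even part of $\Z/(n-1)!$ compatibly, so that, in combination with the surjectivity of $\Theta$ and the fact that $W(k)$ is generated by the rank-one forms $\langle\alpha\rangle$, these two families account for every isomorphism class; this is where the bulk of the quantitative work lies.
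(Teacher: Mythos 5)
Your proposal is correct and follows essentially the same route as the paper's proof: reduce via Lemma \ref{lem:orientedvectorbundles} and contraction to $({\mathbf T}_n)_{-n}(k)$, compute that contraction from the fiber-product presentation of ${\mathbf T}_n$ using Corollary \ref{cor:estimate} and Proposition \ref{prop:contractionsofMW}, then identify the map $[Q_{2n-1},\A^n\setminus 0]_{\aone} \cong K_0^{MW}(k) \to ({\mathbf T}_n)_{-n}(k)$ with the natural projection and compute the $GW(k)$-valued degrees of the listed rows, getting $(1,\langle\alpha\rangle)$ and $(2m,0)$ exactly as the paper does (it cites Fasel and Morel's $[a^{2m}] = m\langle 1,-1\rangle[a]$ for these degrees, where you argue the linear-scaling case directly). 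The only divergence is cosmetic packaging (contracting the exact sequence and cartesian square yourself rather than quoting Proposition \ref{prop:tatehomotopysheavesofgln}), and your closing concern about generators versus representatives is resolved in the paper no differently than in your sketch.
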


\begin{proof}
Repeating the beginning of the proof of Theorem \ref{thm:bundleseven}, we observe that the set $[Q_{2n-1},BSL_{n-1}]_{\aone}$ can be identified with $\bpi_{n-1}^{\aone}(BSL_{n-1})_{-n}(k)$ and that all vector bundles of rank $n-1$ become trivial upon addition of a free rank $1$ summand and are given by unimodular rows.  In this case, since $n$ is odd, $n-1$ is even and then Proposition \ref{prop:tatehomotopysheavesofgln} implies that there is an isomorphism of the form
\[
\bpi_{n-1}^{\aone}(BSL_{n-1})_{-n}(k) \cong ({\mathbf T}_{n})_{-n}(k).
\]
Moreover, the fiber product presentation of ${\mathbf T}_n$ yields a presentation of the form
\[
\xymatrix{({\bf T}_n)_{-n}(k)\ar[r]\ar[d] & ({\bf I}^n)_{-n}(k)\ar[d] \\
\Z/(n-1)!\ar[r] & \Z/2}
\]
By Proposition \ref{prop:contractionsofMW}, we know that $({\bf I}^n)_{-n}={\bf W}$ and therefore that the set of oriented rank $n-1$ vector bundles is in bijection with
\[
\Z/(n-1)! \times_{\Z/2} W(k).
\]
This presentation as a fiber product gives rise to an exact sequence of the form
\[
0 \longrightarrow 2\Z/(n-1)! \longrightarrow \Z/(n-1)! \times_{\Z/2} W(k) \longrightarrow W(k) \longrightarrow 0,
\]
where the first term should be thought of as multiples of the hyperbolic form $\langle 1,-1\rangle$.

To find explicit generators, we observe as in the proof of Theorem \ref{thm:bundleseven} that the morphism
\[
[Q_{2n-1},\A^n\setminus 0]_{\aone} \longrightarrow [Q_{2n-1},BSL_{n-1}]_{\aone}
\]
corresponds to the projection $\Z\times_{\Z/2}W(k)=K_0^{\MW}(k)\to ({\mathbf T}_n)_{-n}(k)=\Z/(n-1)!\times_{\Z/2} W(k)$. Since the class of a unimodular row given by an endomorphism of ${\mathbb A}^n\setminus 0$ in the group on the left-hand side of the above morphism is precisely the Grothendieck-Witt valued Brouwer degree, it suffices to compute this degree for the rows in the theorem statement.

That $(\alpha x_1,\ldots,x_n)$ has degree precisely $(1,\langle \alpha \rangle)$ is a consequence of \cite[Remark 2.6]{Fasel11c}. On the other hand, the degree of the row $(x_1^{2m},x_2,\ldots,x_n)$ is given by the class of the symbol $[x_1^{2m},x_2,\ldots,x_n]$ as a $K_0^{MW}(k)$-multiple of the class of $[x_1,\ldots,x_n]$. Now \cite[Lemma 3.14]{MField} yields $[x_1^{2m},x_2,\ldots,x_n]=m\langle 1,-1\rangle [x_1,\ldots,x_n]$ showing that $(x_1^{2m},x_2,\ldots,x_n)$ has degree $(2m,0)$.
\end{proof}

\begin{rem}
Using \cite[Lemma 3.14]{MField} once again, we see that the degree of $(x_1^m,x_2,\ldots,x_n)$ is $(m,\langle 1,-1,\ldots,(-1)^{m-1}\rangle)$. For the sake of completeness, we also show how to compute the degree of $(\alpha x_1^m,x_2,\ldots,x_n)$ for any $\alpha\in k^\times$ and any $m\in\N$. We have
\[
[\alpha x_1^m]=[\alpha]+[x_1^m]+\eta[\alpha,x_1^m]=[\alpha]+\langle \alpha\rangle [x_1^m]=[\alpha]+\langle \alpha,-\alpha,\ldots,(-1)^m\alpha\rangle[x_1].
\]
Since the unimodular row $(\alpha,x_2,\ldots,x_n)$ is completable, it follows that the degree of the row $(\alpha x_1^m,x_2,\ldots,x_n)$ is $(m,\langle \alpha,-\alpha,\ldots,(-1)^m\alpha\rangle)$.
\end{rem}

\begin{cor}\label{cor:nonorientedbundlesodd}
If $n\geq 3$ is an odd integer, then there is a bijection between the set of isomorphism classes of rank $n-1$ vector bundles on $Q_{2n-1}$ and the quotient set $\Z/(n-1)!\times_{\Z/2} W(k)/k^{\times}$.
\end{cor}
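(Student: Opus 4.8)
The plan is to follow the template of Corollary~\ref{cor:nonorientedbundleseven}. By Corollary~\ref{cor:mgeqn} every rank $n$ bundle on $Q_{2n-1}$ is free, so for any rank $n-1$ bundle $E$ the sum $E\oplus\O$ is a free rank $n$ bundle; hence every rank $n-1$ bundle is the kernel of a unimodular row, and the set of isomorphism classes of rank $n-1$ bundles is identified with the orbit set $Um_n(Q_{2n-1})/GL_n(Q_{2n-1})$. Theorem~\ref{thm:bundlesodd} already computes the finer orbit set $Um_n(Q_{2n-1})/SL_n(Q_{2n-1})\cong \Z/(n-1)!\times_{\Z/2}W(k)$, so the problem reduces to understanding the surjection
\[
Um_n(Q_{2n-1})/SL_n(Q_{2n-1}) \longrightarrow Um_n(Q_{2n-1})/GL_n(Q_{2n-1}).
\]

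Next I would identify the residual action. Since $SL_n$ is normal in $GL_n$, the target is the quotient of the source by the induced action of $GL_n(Q_{2n-1})/SL_n(Q_{2n-1})$. The global units of $Q_{2n-1}$ are $A_{2n-1}^\times=k^\times$, so the determinant identifies this quotient with $k^\times$, with splitting $\alpha\mapsto \mathrm{diag}(\alpha,1,\ldots,1)$. Right multiplication of a unimodular row $(v_1,\ldots,v_n)$ by this matrix produces $(\alpha v_1,v_2,\ldots,v_n)$, so the whole computation comes down to the effect of scaling the first coordinate of a row by $\alpha$ under the identifications of Theorem~\ref{thm:bundlesodd}.

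The key point is the behaviour of the Grothendieck-Witt-valued degree in $K_0^{\MW}(k)=\Z\times_{\Z/2}W(k)$, which classifies $[Q_{2n-1},\A^n\setminus 0]_{\aone}$. Scaling the first coordinate by $\alpha$ is the same as post-composing the corresponding self-map of $\A^n\setminus 0$ with the automorphism multiplying the first coordinate by $\alpha$; the latter has degree $\langle\alpha\rangle$, and since degree is multiplicative under composition, the effect on the degree is multiplication by $\langle\alpha\rangle$. This is exactly the behaviour recorded in the Remark following Theorem~\ref{thm:bundlesodd}. Under the projection to $\Z/(n-1)!\times_{\Z/2}W(k)$, multiplication by the rank-one form $\langle\alpha\rangle$ fixes the $\Z/(n-1)!$-coordinate and acts on $W(k)$ by multiplication by $\langle\alpha\rangle$; as $\langle\alpha\rangle$ is a unit of rank $1$ this preserves the common reduction to $\Z/2$, so the action $\alpha\cdot(m,q)=(m,\langle\alpha\rangle q)$ is well defined on the fiber product. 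Passing to orbits then yields the asserted bijection with $\Z/(n-1)!\times_{\Z/2}W(k)/k^\times$.

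I expect the main obstacle to be the bookkeeping in this last step: one must check that the residual $GL_n/SL_n$-action translates precisely into multiplication by $\langle\alpha\rangle$ on the degree, rather than into some action mixing the two factors of the fiber product, and confirm that it descends to the fiber product by preserving the shared $\Z/2$-reductions. Once the multiplicativity of the degree under coordinate scaling, already the content of the degree computations in Theorem~\ref{thm:bundlesodd}, is pinned down, the remainder is the formal identity $X/GL_n=(X/SL_n)/(GL_n/SL_n)$ for the action of a group with normal subgroup $SL_n$.
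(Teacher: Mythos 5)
Your proposal is correct and follows essentially the same route as the paper: reduce via Corollary \ref{cor:mgeqn} to comparing the orbit sets $Um_n(Q_{2n-1})/SL_n(Q_{2n-1})$ and $Um_n(Q_{2n-1})/GL_n(Q_{2n-1})$, identify the residual action as $\alpha \in k^\times$ scaling the first coordinate, and compute that it acts on a pair $(m,\phi)$ by $(m,\langle\alpha\rangle\cdot\phi)$, exactly as in the remark following Theorem \ref{thm:bundlesodd}. Your justification of the action formula via multiplicativity of the Brouwer degree under composition is a slight repackaging of the paper's direct Milnor--Witt symbol computation, but it rests on the same inputs and is equally valid.
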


\begin{proof}
Following the arguments of Corollary \ref{cor:nonorientedbundleseven}, we see that we have to compare the degrees in $\Z/(n-1)!\times_{\Z/2} W(k)/k^{\times}$ of $(a_1,\ldots,a_n)$ and $(\alpha a_1,a_2,\ldots,a_n)$ for $\alpha\in Q_{2n-1}^\times=k^\times$. Arguing as in the above remark, we see that a unit $\alpha\in k^\times$ acts on a pair $(m,\phi)\in \Z/(n-1)!\times_{\Z/2} W(k)$ as $\alpha\cdot (m,\phi)=(m,\langle \alpha\rangle\cdot \phi)$. The result follows.
\end{proof}

\begin{rem}
Let $f_1,\ldots,f_n \in k[y_1,\ldots,y_n]$ be functions such that $V(f_1,\ldots,f_n)$ is a point in ${\mathbb A}^n$.  The hypersurface defined by the equation $\sum_i x_i f_i = 1$ is a smooth affine variety that is $\aone$-weakly equivalent to ${\mathbb A}^n \setminus 0$.  By Morel's theorem, the set of isomorphism classes of vector bundles on such a variety is canonically in bijection with the set of isomorphism classes of vector bundles on $Q_{2n-1}$.  However, the varieties so defined are {\em not} in general isomorphic to $Q_{2n-1}$.  These varieties are torsors under vector bundles over ${\mathbb A}^n \setminus 0$.  For example, when $n = 2$, there are pairwise non-isomorphic varieties of this form \cite[Theorem 2.5]{DuboulozFinston}.  Theorems \ref{thm:bundleseven} and \ref{thm:bundlesodd} also provide a description of the set of isomorphism classes of rank $n-1$ bundles on any such variety.
\end{rem}

\subsubsection*{Vector bundles below critical rank}
Since the Picard group of $Q_7$ is trivial, and we understand vector bundles of rank $\geq 3$ on $Q_7$ by the results already proven, the next result, which uses \cite[Theorem 3.10]{Asok12} completes the description of vector bundles on $Q_7$; this uses \cite[Proposition 7.1]{Asok12} in the same way as the results above, so we leave the proof to the reader.

\begin{prop}
\label{prop:rank2bundlesonq7}
There is a canonical bijection $\mathscr{V}_2(Q_7) \isomt \bpi_{2}^{\aone}(SL_2)_{-4}(k)$ and a short exact sequence of the form
\[
0 \longrightarrow I(k) \longrightarrow \bpi_{2}^{\aone}(SL_2)_{-4} \longrightarrow \Z/12 \longrightarrow 0.
\]
\end{prop}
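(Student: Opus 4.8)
The plan is to run the argument of Theorems~\ref{thm:bundleseven} and~\ref{thm:bundlesodd} one homotopical degree deeper, feeding in the rank-$2$ computations of \cite{Asok12} in place of the critical-rank inputs. First I would establish the bijection. Since $Q_7 \simeq \A^4 \setminus 0$ in $\ho{k}$, the group $\Pic(Q_7)$ is trivial, so every rank-$2$ bundle on $Q_7$ admits an orientation; moreover $n = 4$ is even, so (as in the even critical-rank case of Corollary~\ref{cor:nonorientedbundleseven}) orientations are unique up to isomorphism and the forgetful map $\mathscr{V}^o_2(Q_7) \to \mathscr{V}_2(Q_7)$ is a bijection. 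Using \cite[Proposition~7.1]{Asok12} exactly as Lemma~\ref{lem:orientedvectorbundles} was used above, I would identify $\mathscr{V}^o_2(Q_7)$ with $[Q_7,BSL_2]_{\aone} \cong \bpi_{3,4}^{\aone}(BSL_2)(k)$ (Theorem~\ref{thm:orientedvectorbundles}). Theorem~\ref{thm:contractions}, together with the isomorphism $\bpi_2^{\aone}(SL_2) \cong \bpi_3^{\aone}(BSL_2)$ coming from the $\aone$-fiber sequence $SL_2 \to ESL_2 \to BSL_2$, then rewrites this as $\bpi_2^{\aone}(SL_2)_{-4}(k)$, which is the asserted bijection; note that the bidegree $(3,4)$ matches the suspension coordinates of $Q_7 \simeq \Sigma^3_s \gm^{\wedge 4}$.

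For the short exact sequence I would invoke the sheaf computation \cite[Theorem~3.10]{Asok12}, which presents the second non-stable homotopy sheaf $\bpi_2^{\aone}(SL_2)$ as an extension of strictly $\aone$-invariant sheaves whose sub- and quotient sheaves are ${\mathbf I}^5$ and $\K^M_4/12$, of the form
\[
0 \longrightarrow {\mathbf I}^5 \longrightarrow \bpi_2^{\aone}(SL_2) \longrightarrow \K^M_4/12 \longrightarrow 0.
\]
Because the contraction functor is exact and preserves strict $\aone$-invariance, contracting four times gives
\[
0 \longrightarrow ({\mathbf I}^5)_{-4} \longrightarrow \bpi_2^{\aone}(SL_2)_{-4} \longrightarrow (\K^M_4/12)_{-4} \longrightarrow 0.
\]
Proposition~\ref{prop:contractionsofMW} identifies $({\mathbf I}^5)_{-4} \cong {\mathbf I}$, and the remark following Lemma~\ref{lem:contractionsofquillenandmilnor} identifies $(\K^M_4/12)_{-4} \cong \K^M_0/12$. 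Evaluating on $\Spec k$ keeps the sequence exact, since the Nisnevich site of a field has vanishing higher cohomology; as ${\mathbf I}(k) = I(k)$ and $(\K^M_0/12)(k) = \Z/12$, this is precisely the stated sequence $0 \to I(k) \to \bpi_2^{\aone}(SL_2)_{-4}(k) \to \Z/12 \to 0$.

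The real content sits in the two external inputs, which is why the authors quote them rather than reproving them: \cite[Theorem~3.10]{Asok12} is the genuinely hard computation of $\bpi_2^{\aone}(SL_2)$, and \cite[Proposition~7.1]{Asok12} supplies representability below the critical rank. Granting these, the remaining steps are bookkeeping, and the only points I would check carefully are the uniqueness of orientations (so that $\mathscr{V}_2(Q_7)$ agrees with the oriented count), the matching of bidegrees, and the exactness of the contracted sequence on $k$-points. The one conceptual obstacle, were the proof attempted from scratch, would be constructing the sheaf extension above---exactly the step delegated to \cite[Theorem~3.10]{Asok12}; its complex realization recovers $\pi_6(S^3)=\Z/12$, while the summand ${\mathbf I}^5$ accounts for the Witt-theoretic part $I(k)$.
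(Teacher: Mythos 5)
Your skeleton is the one the paper intends---the paper's own ``proof'' consists precisely of the two citations to \cite{Asok12} plus the instruction to argue ``in the same way as the results above''---and your contraction bookkeeping is fine: exactness of contraction, $(\mathbf{I}^5)_{-4}\cong \mathbf{I}$, $(\K^M_4/12)_{-4}\cong \K^M_0/12$, and exactness on sections over $\Spec k$ because a field has no higher Nisnevich cohomology. However, there are two genuine gaps.

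First, you have misquoted the key input, and the misquote conceals a step that is genuinely needed. \cite[Theorem 3.10]{Asok12} does not assert that $\bpi_2^{\aone}(SL_2)$ is an extension of $\K^M_4/12$ by $\mathbf{I}^5$, and no such assertion can be true: since $SL_2\cong Sp_2$, the $\aone$-fiber sequence $Sp_2 \to Sp_4 \to Sp_4/Sp_2 \simeq \A^4\setminus 0$ together with the $\aone$-$2$-connectedness of $\A^4\setminus 0$ forces an epimorphism from $\bpi_2^{\aone}(SL_2)$ onto the \emph{stable} sheaf $\bpi_2^{\aone}(Sp_4)\cong \K^{Sp}_3 = \mathbf{GW}^2_3$, which is nonzero and cannot fit inside your extension (for instance, its sections over $\cplx$ are nonzero, while your putative extension has vanishing sections over $\cplx$, since $K^M_4(\cplx)$ is divisible and $I^5(\cplx)=0$). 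What the cited theorem actually provides is a presentation of $\bpi_2^{\aone}(SL_2)$ as an extension of $\K^{Sp}_3$ by the unstable piece $\coker(\K^{Sp}_4\to \K^{MW}_4)$, and it is only that unstable piece whose four-fold contraction yields $0\to I(k)\to \cdot \to \Z/12\to 0$. A correct proof must therefore also show that the stable symplectic quotient dies under contraction: $(\K^{Sp}_3)_{-4}\cong \mathbf{GW}^{-2}_{-1}$, a negative-degree Grothendieck--Witt sheaf, hence a shifted Witt sheaf which vanishes on fields (the shift is not divisible by $4$). This step is entirely absent from your write-up, and its necessity is exactly why the proposition states the short exact sequence only for $\bpi_2^{\aone}(SL_2)_{-4}$ rather than for $\bpi_2^{\aone}(SL_2)$ itself.

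Second, your claim that the forgetful map $\mathscr{V}^o_2(Q_7)\to \mathscr{V}_2(Q_7)$ is a bijection ``since $n=4$ is even, as in Corollary \ref{cor:nonorientedbundleseven}'' is not a proof. The mechanism in Corollary \ref{cor:nonorientedbundleseven} is a symbol computation in Milnor K-theory, available because the classifying group there is $\Z/(n-1)!$; in the one place in the paper where the classifying group has a Witt-theoretic part (Corollary \ref{cor:nonorientedbundlesodd}), the units action is \emph{nontrivial}, $\alpha\cdot(m,\phi)=(m,\langle\alpha\rangle\phi)$, and the forgetful map fails to be injective. Since $\bpi_2^{\aone}(SL_2)_{-4}(k)$ also has a Witt-theoretic part $I(k)$, the parity of $n$ is not the operative issue, and triviality of the $k^\times$-action requires a real argument. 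The conclusion is nevertheless true, but for a symplectic reason: rescaling a symplectic form is induced by conjugation by a symplectic similitude, which acts on $\bpi_3^{\aone}(\A^4\setminus 0)=\K^{MW}_4$ (and hence on its quotient $\coker(\K^{Sp}_4\to\K^{MW}_4)$, which is all that survives contraction) through $\langle\det\rangle$ of a linear map whose determinant is a square, hence trivially. Supplying this is the role the paper assigns to \cite[Proposition 7.1]{Asok12}; you instead invoke that result only for representability, which Theorem \ref{thm:orientedvectorbundles} (via Moser's theorem for rank $2$) and Lemma \ref{lem:orientedvectorbundles} already furnish.
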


\begin{rem}
As with the case of rank $3$ bundles on $Q_7$, the rank $2$ vector bundles on $Q_7$ are all given by stably free modules. It is possible to give explicit representatives for each of these stably free vector bundles: see \cite[\S 3]{Fasel10} for more information on how to associate a symplectic bundle of rank $2$ to an unimodular row of length $4$.  For example, the unimodular rows of the form $(x_1^m,x_2,x_3,x_4)$ with $1 \leq m \leq 12$ give rise to non-isomorphic rank $2$ vector bundles.
\end{rem}

%%%%%%%%%%%%%%%%%%%%%%%%%%%%%%%%%%%%%%%%%%%%%%%%%%%%%%%%%%%%%%%%%%%%%%%%%%%%%%%%%%%%%%%%%%%%%%%%%%%%%%%%%%%%%%%%%
\section{Two applications: completability and complex realization}
\label{section:applications}
In this section, we discuss two applications of the description of the set of isomorphism classes of vector bundles on split quadrics from Section \ref{s:unimodular}.

\subsection*{A question of M. V. Nori and lifting problems}
Our computations regarding vector bundles of rank $n-1$ on $Q_{2n-1}$ allow us to address the following question of M. V. Nori on unimodular rows.

\begin{question}[M. V. Nori]
\label{quest:nori}
Suppose $k$ is a field, $R=k[x_1,\ldots,x_n]$ is a polynomial ring in $n$ variables over $k$, $\phi:R\to A$ is a $k$-algebra homomorphism such that $\sum \phi(x_i)A=A$, and $f_1,\ldots,f_n$ are elements of $R$ such that the reduced closed subscheme defined by the ideal $(f_1,\ldots,f_n)$ is $0 \in {\mathbb A}^n$.  If $length(R/(f_1,\ldots,f_n))$ is divisible by $(n-1)!$, then is the unimodular row $(\phi(f_1),\ldots,\phi(f_n))$ completable?
\end{question}

Nori's question admits the following reinterpretation.  The homomorphism $\phi:R\to A$ such that $\sum \phi(x_i)A=A$ defines a unimodular row $v=(\phi(x_0),\ldots,\phi(x_n))$ and a morphism of schemes $v:\Spec R\to \A^{n}\setminus 0$. Now  any polynomials $f_1,\ldots,f_n$ such that $\mathrm{rad}(f_1,\ldots,f_n)=(x_1,\ldots,x_n)$ defines a morphism $\varphi:\A^n\setminus 0\to \A^n\setminus 0$. If $l(R/(f_1,\ldots,f_n))$ is divisible by $(n-1)!$, then does the morphism $\varphi\circ v:\Spec A\to \A^n\setminus 0$ lift to a morphism $\Spec A\to SL_n$?

Since the question is about \emph{all} $k$-algebras $A$ and all unimodular rows of length $n$ on $A$, it is reasonable to try to deal with the above question by looking at the universal algebra parameterizing unimodular rows of length $n$ (together with the choice of a section), namely the coordinate $k$-algebra $A_{2n-1}$ of $Q_{2n-1}$.  Indeed, let $A$ be a $k$-algebra and $v$ be a unimodular row of length $n$. Then the choice of $w\in Um_n(A)$ such that $v\cdot w^t=1$ yields a lift of the morphism $v:\Spec A\to \A^n\setminus 0$ to a morphism $v^\prime:\Spec A\to Q_{2n-1}$, i.e. we have a commutative diagram
\[
\xymatrix@C=3em{\Spec R\ar@{-->}[r]^-{v^\prime}\ar[rd]_-v & Q_{2n-1}\ar[d]^-{p_{2n-1}}\\
 & \A^n\setminus 0.}
\]
Let now $\varphi:\A^n\setminus 0\to \A^n\setminus 0$ be a morphism and $r:SL_n\to \A^n\setminus 0$ be the projection to the first row. The diagram
\[
\xymatrix{ & Q_{2n-1}\ar[d]^-{p_{2n-1}} & \\
\Spec R\ar[r]_-v\ar[ru]^-{v^\prime}\ar[rd]_{\varphi v} & \A^n\setminus 0\ar[d]^-\varphi & SL_n\ar[ld]^-r \\
 & \A^n\setminus 0 & }
\]
thus proves that it suffices to show that $\varphi\circ p_{2n-1}$ factorizes through $SL_n$ to prove that $\varphi v$ also factorizes through $SL_n$.

\begin{thm}
\label{thm:Nori}
If $n$ is an even integer, then \textup{Question \ref{quest:nori}} has an affirmative answer.
\end{thm}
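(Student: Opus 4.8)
The plan is to follow the reduction set up in the two diagrams immediately preceding the statement: it suffices to produce a morphism $Q_{2n-1} \to SL_n$ through which the composite $\varphi \circ p_{2n-1}: Q_{2n-1} \to \mathbb{A}^n \setminus 0$ factors, for then $\varphi \circ v = \varphi \circ p_{2n-1} \circ v'$ factors through $SL_n$ via $v'$, and the row $(\phi(f_1),\ldots,\phi(f_n))$ is completable. Equivalently, I must show that the unimodular row $\varphi \circ p_{2n-1}$ over $A_{2n-1}$ is completable. By the completability criterion recorded above --- the square relating $\hom_{\Sm_k}(-,SL_n) \to \hom_{\Sm_k}(-,\mathbb{A}^n\setminus 0)$ to $[-,SL_n]_{\aone} \to [-,\mathbb{A}^n\setminus 0]_{\aone}$ has surjective vertical maps --- completability of this row is equivalent to the assertion that its class in $[Q_{2n-1},\mathbb{A}^n\setminus 0]_{\aone}$ lifts to $[Q_{2n-1},SL_n]_{\aone}$.

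First I would translate this lifting problem into the obstruction coming from the $\aone$-fiber sequence $SL_n \to \mathbb{A}^n\setminus 0 \to BSL_{n-1} \to BSL_n$. Mapping $(Q_{2n-1})_+$ into it yields an exact sequence of pointed sets in which the image of $[Q_{2n-1},SL_n]_{\aone}$ is exactly the fiber over the base point of $[Q_{2n-1},\mathbb{A}^n\setminus 0]_{\aone} \to [Q_{2n-1},BSL_{n-1}]_{\aone}$; hence the row lifts if and only if its image in $[Q_{2n-1},BSL_{n-1}]_{\aone}$ is the class of the free, determinant-trivialized bundle. Here I would invoke the hypothesis that $n$ is even: Theorem \ref{thm:bundleseven} identifies $[Q_{2n-1},BSL_{n-1}]_{\aone} \cong (\mathbf{S}_n)_{-n}(k) = \Z/(n-1)!$ (there is no Witt summand, in contrast to the odd case governed by $\mathbf{T}_n$), and its proof identifies the map $[Q_{2n-1},\mathbb{A}^n\setminus 0]_{\aone} \to [Q_{2n-1},BSL_{n-1}]_{\aone}$ with the composite $K_0^{MW}(k) \to K_0^M(k) \to \Z/(n-1)!$, i.e.\ the rank homomorphism $GW(k) \to \Z$ followed by reduction modulo $(n-1)!$.

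It remains to compute the class of $\varphi \circ p_{2n-1}$ in $[Q_{2n-1},\mathbb{A}^n\setminus 0]_{\aone} \cong K_0^{MW}(k) = GW(k)$ and to extract its rank. Under the $\aone$-weak equivalence $Q_{2n-1} \simeq \mathbb{A}^n\setminus 0$ this class is the Grothendieck--Witt valued Brouwer degree $\deg(\mathbf{f})$ of the endomorphism $\mathbf{f} = (f_1,\ldots,f_n)$, exactly as used in the proof of Theorem \ref{thm:bundlesodd}. The crux is to show that the rank of $\deg(\mathbf{f})$ equals $length(R/(f_1,\ldots,f_n))$: since the reduced zero locus of $\mathbf{f}$ is the single $k$-rational point $0$, the degree is a purely local invariant at the origin whose rank is the local intersection multiplicity $\dim_k \mathcal{O}_{\mathbb{A}^n,0}/(f_1,\ldots,f_n) = length(R/(f_1,\ldots,f_n))$. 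This is consistent with the explicit rows $(x_1^m,x_2,\ldots,x_n)$ of class $m$ appearing in the proof of Theorem \ref{thm:bundleseven}, whose quotient ring visibly has length $m$. Granting this, the hypothesis that $(n-1)!$ divides the length forces the image in $\Z/(n-1)!$ to vanish, the obstruction is trivial, the lift exists, and the row is completable.

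I expect the rank-equals-length identification to be the main obstacle, as it is the one step connecting the homotopy-theoretic degree to the classical multiplicity datum in Nori's question. I would justify it either by identifying the rank homomorphism with the map $[Q_{2n-1},\mathbb{A}^n\setminus 0]_{\aone} \to H^{n-1}(Q_{2n-1},\K^M_n) \cong \Z$ induced by $\K^{MW}_n \to \K^M_n$ and computing the resulting Milnor K-theoretic degree of $\mathbf{f}$ directly, or by passing through complex realization, where the rank of a $GW$-valued degree becomes the topological mapping degree of $\mathbf{f}_{\mathbb{C}}$, which equals the intersection multiplicity $length(R/(f_1,\ldots,f_n))$ by classical degree theory. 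Everything else is a formal chase through the fiber sequence and the identifications already in hand.
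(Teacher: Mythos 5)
Your overall strategy coincides with the paper's: reduce to the universal row $\varphi\circ p_{2n-1}$ over $Q_{2n-1}$, identify the obstruction to lifting through $SL_n$ as a class in $[Q_{2n-1},BSL_{n-1}]_{\aone}\cong ({\mathbf S}_n)_{-n}(k)=\Z/(n-1)!$ (using that $n$ is even, via Theorem \ref{thm:bundleseven}), and identify the map from $[Q_{2n-1},{\mathbb A}^n\setminus 0]_{\aone}\cong K_0^{MW}(k)=GW(k)$ with the rank homomorphism followed by reduction modulo $(n-1)!$. You have also correctly isolated the crux: that the rank of the degree of ${\mathbf f}$ equals $length(R/(f_1,\ldots,f_n))$. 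But this is exactly the step you do not prove, and neither of your two proposed justifications closes it.

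The complex-realization route fails in general: Question \ref{quest:nori} concerns an arbitrary field $k$ of characteristic $\neq 2$, and a field of positive characteristic (or one of characteristic zero too large to embed in $\cplx$) admits no complex realization functor, so ``classical degree theory'' is simply unavailable; even over subfields of $\cplx$ one would additionally need compatibility of the motivic degree with the topological degree, which is itself a theorem in the spirit of Theorem \ref{thm:complexrealization} rather than a formality. Your other route --- computing the Milnor K-theoretic degree of ${\mathbf f}$ ``directly'' --- is the one the paper takes, but it is not a formal chase, and the missing content is precisely the paper's proof: one computes the effect of $F^*$ on $H^{n-1}({\mathbb A}^n\setminus 0,\K^M_n)\cong \Z$ by passing through the connecting homomorphism $\partial$ into the local cohomology group $H^n_{\{0\}}({\mathbb A}^n,\K^M_n)$ of the Gersten complex; one checks that $\partial(x_n)$, with $x_n$ viewed in $K_1^M(k(Y))$ for $Y=\{x_1=\cdots=x_{n-1}=0\}$, generates this group; and --- the key geometric input you never mention --- one invokes Swan's version of Bertini's theorem to replace $(f_1,\ldots,f_{n-1})$, up to homotopy, by a regular (even smooth) sequence, so that $F^*(x_n)=f_n$ makes sense in $K_1^M(k(Z))$ and $\partial(f_n)$ is visibly $l(R/(f_1,\ldots,f_n))$. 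Without this computation (or an algebraic substitute for the ``rank of local degree equals local multiplicity'' principle, which the paper does not have available as a citation and therefore proves), the identification of the obstruction class with the length --- the only point where the divisibility hypothesis enters --- remains an assertion, so the proposal is incomplete at its mathematical heart.
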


\begin{proof}
The morphism $p_{2n-1}:Q_{2n-1}\to \A^n\setminus 0$ corresponds to the unimodular row $(x_1,\ldots,x_n)$, whose class in $H^{n-1}(Q_{2n-1},BSL_{n-1}) \cong H^{n-1}(Q_{2n-1},{\mathbf S}_n) = \Z/(n-1)!$ is $1$.

The unimodular row $(f_1,\ldots,f_n)$ corresponds to the morphism
\[
F:\A^n\setminus 0\to \A^n\setminus 0
\]
defined by $F(x_1,\ldots,x_n)=(f_1(x_1,\ldots,x_n),\ldots,f_n(x_1,\ldots,x_n))$. This morphism induces a commutative diagram
\[
\xymatrix{H^{n-1}(\A^n\setminus 0,\K^M_n)\ar[r]^-\partial\ar[d]_-{F^*} & H^n_{\{0\}}(\A^n,\K^M_n)\ar[d]^-\alpha \\
H^{n-1}(\A^n\setminus 0,\K^M_n)\ar[r]_\partial & H^n_{\{0\}}(\A^n,\K^M_n)  }
\]
where $\partial$ is the connecting homomorphism in the long exact sequence in cohomology associated with the open embedding $\A^n\setminus 0\subset \A^n$ and $\alpha$ is the map we want to identify. Consider the closed subset $Y$ of $\A^n\setminus 0$ given by the equations $x_1=\ldots=x_{n-1}=0$.  View the element $x_n$ as lying in $K_1^M(k(Y))$. A simple computation shows that $\partial (x_n)$ is the generator of $H^n_{\{0\}}(\A^n,\K^M_n)=\Z$.

Up to homotopy, we can suppose that the sequence $(f_1,\ldots,f_{n-1})$ is regular and even smooth by Swan's version of Bertini's theorem \cite[Theorem 1.3]{Swan74}. The closed subscheme $Z$ defined by $f_1=\ldots=f_{n-1}=0$ is then of height $n-1$ and $f_n$ can be seen as an element of $K_1^M(k(Z))$. We have $F^*(x_n)=f_n$ and by definition of the connecting homomorphism $\partial (f_n)=l(R/\langle f_1,\ldots,f_n\rangle)$ under the isomorphism $H^n_{\{0\}}(\A^n,\K^M_n)=\Z$.
\end{proof}

When $n$ is odd, the answer to Nori's question is known to be negative by \cite[Theorem 4.7]{Fasel11c}. In view of this counter-example, the second author proposed a refined version of Nori's question, which we now explain.  Lemma \ref{lem:cohomology} shows that $H^{n-1}(Q_{2n-1},{\bf I}^n) \cong H^{n-1}(\A^n\setminus 0,{\bf I}^n)\simeq ({\bf I}^n)_{-n}(k)=W(k)$.  The isomorphism can be uniquely specified by choosing a trivialization of the normal sheaf of $0$ in $\A^n$, i.e., an orientation class \cite[Remark 2.5]{Fasel11c}. Any morphism $\varphi:\A^n\setminus 0\to \A^n\setminus 0$ yields a homomorphism $\varphi^*:W(k)\to W(k)$ that we call the degree of $\varphi$ and write $\mathrm{deg}(\varphi)$. This degree is simply a concrete avatar of (the quadratic part of) F. Morel's Brouwer degree \cite[Corollary 24]{MField}; see also Corollary \ref{cor:GWdegree}.

\begin{rem}
In \cite{Fasel11c}, a degree homomorphism is defined by considering the Grothendieck-Witt group $GW_{red}^{n-1}(\A^n\setminus 0)$ (here the subscript {\em red} means ``reduced," i.e., one has split off the summand corresponding to a base-point; see \cite[Lemma 2.4]{Fasel11c} for more details). This degree is exactly the same as the one defined above. Indeed, the Gersten-Grothendieck-Witt spectral sequence $E(n-1)^{p,q}$ defined in \cite{FaselSrinivas} shows that the edge homomorphism $E(n-1)_2^{n-1,0}=H^{n-1}(\A^n\setminus 0,{\bf I}^n)\to GW^{n-1}(\A^n\setminus 0)$ induces an isomorphism $H^{n-1}(\A^n\setminus 0,{\bf I}^n)\to GW_{red}^{n-1}(\A^n\setminus 0)$.
\end{rem}

We now state and prove a result that constitutes a positive answer to a refinement of Nori's original question; this provides an answer to \cite[Question 4.8]{Fasel11c}.

\begin{thm}\label{thm:Nori+}
Suppose $k$ is a field, $R=k[x_1,\ldots,x_n]$ is a polynomial ring in $n$ variables over $k$, $\phi:R\to A$ is a $k$-algebra homomorphism such that $\sum \phi(x_i)A=A$, and $f_1,\ldots,f_n$ are as in Question \ref{quest:nori}.
Let $\varphi:\A^n\setminus 0\to \A^n\setminus 0$ be the morphism induced by $f_1,\ldots,f_n$. If the length of $R/(f_1,\ldots,f_n)$ is divisible by $(n-1)!$ and the degree $\mathrm{deg}(\varphi)=0$, then the unimodular row $(\phi(f_1),\ldots,\phi(f_n))$ is completable.
\end{thm}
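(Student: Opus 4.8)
The plan is to follow the reduction set up just before Theorem~\ref{thm:Nori}: since $Q_{2n-1}$ carries the universal unimodular row of length $n$, it suffices to prove that the composite $\varphi\circ p_{2n-1}\colon Q_{2n-1}\to \A^n\setminus 0$ factors, up to $\aone$-homotopy, through the first-row projection $SL_n\to \A^n\setminus 0$; pulling back along $v'\colon \Spec A\to Q_{2n-1}$ then yields completability of $(\phi(f_1),\ldots,\phi(f_n))$ for every $A$. When $n$ is even this is already Theorem~\ref{thm:Nori} — the obstruction group will contain no Witt-theoretic part and the hypothesis $\mathrm{deg}(\varphi)=0$ is superfluous — so the real content is the case $n$ odd, which I treat below.

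First I would feed $(Q_{2n-1})_+$ into the $\aone$-fiber sequence $SL_n\to \A^n\setminus 0\to BSL_{n-1}\to BSL_n$ from the subsection on completability, obtaining the exact sequence of pointed sets
\[
[Q_{2n-1},SL_n]_{\aone}\longrightarrow [Q_{2n-1},\A^n\setminus 0]_{\aone}\longrightarrow [Q_{2n-1},BSL_{n-1}]_{\aone}.
\]
By the completability discussion a class in the middle comes from $SL_n$ — equivalently, the corresponding row is completable — exactly when its image on the right is the base point. Thus the entire problem reduces to computing the image of the class of $\varphi\circ p_{2n-1}$ in $[Q_{2n-1},BSL_{n-1}]_{\aone}$ and showing the hypotheses force it to vanish.

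Next I would identify the three groups. By Corollary~\ref{cor:GWdegree} and the weak equivalence $p_{2n-1}$, the middle term is $K_0^{\MW}(k)=GW(k)=\Z\times_{\Z/2}W(k)$, and the class of $\varphi\circ p_{2n-1}$ is the $\aone$-Brouwer degree of $\varphi$: its rank coordinate is $\mathrm{length}(R/(f_1,\ldots,f_n))$ by the connecting-map computation in Theorem~\ref{thm:Nori}, and its Witt coordinate is $\mathrm{deg}(\varphi)\in W(k)$ by the definition of the degree recalled just before this theorem. For $n$ odd, Theorem~\ref{thm:bundlesodd} identifies the right-hand term with $({\mathbf T}_n)_{-n}(k)=\Z/(n-1)!\times_{\Z/2}W(k)$ and, crucially, identifies the map out of $GW(k)$ with the projection $\Z\times_{\Z/2}W(k)\to \Z/(n-1)!\times_{\Z/2}W(k)$ reducing the integer coordinate modulo $(n-1)!$ and leaving the $W(k)$-coordinate unchanged (using $({\mathbf I}^n)_{-n}\cong {\mathbf W}$ from Proposition~\ref{prop:contractionsofMW}). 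Hence the image of $\varphi\circ p_{2n-1}$ is the pair $(\overline{\mathrm{length}(R/(f_1,\ldots,f_n))},\,\mathrm{deg}(\varphi))$, which is the base point precisely when $(n-1)!$ divides the length and $\mathrm{deg}(\varphi)=0$: exactly the two hypotheses. The lift to $SL_n$ then exists and the row is completable.

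The step I expect to require the most care is the precise matching of degrees: that the Witt coordinate of the $GW(k)$-valued Brouwer degree of $\varphi$ is the homomorphism $\mathrm{deg}(\varphi)$ defined through the action on $H^{n-1}(\A^n\setminus 0,{\mathbf I}^n)\cong W(k)$, and that the rank coordinate agrees with the length as in Theorem~\ref{thm:Nori}. Both comparisons are essentially supplied by the remark preceding the theorem together with Corollary~\ref{cor:GWdegree}; once they are in place, every remaining step is a formal consequence of the fiber-sequence exactness and the identifications of Theorem~\ref{thm:bundlesodd}.
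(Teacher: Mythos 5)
Your proposal is correct and takes essentially the same route as the paper's own proof: reduce to the universal row $\varphi\circ p_{2n-1}$ on $Q_{2n-1}$, use the fiber-sequence lifting criterion, identify $[Q_{2n-1},BSL_{n-1}]_{\aone}$ with $\Z/(n-1)!\times_{\Z/2}W(k)$ via Theorem \ref{thm:bundlesodd}, and note that the class of the row is the pair $(\overline{\mathrm{length}},\mathrm{deg}(\varphi))$, which is the base point exactly under the stated hypotheses. The paper's proof is simply a compressed version of this argument, citing ``the same arguments as in the proof of Theorem \ref{thm:Nori}'' for the reduction and the rank-coordinate (length) computation that you spell out explicitly.
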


\begin{proof}
Theorem \ref{thm:bundlesodd} shows that $Um_n(Q_{2n-1})/E_n(Q_{2n-1})$ is the fiber product of the groups $W(k)$ and $\Z/(n-1)!$ over $\Z/2$. The same arguments as in the proof of Theorem \ref{thm:Nori} show that we have to prove that the unimodular row $(f_1,\ldots,f_n)$ is completable if $\mathrm{deg}(\varphi)=0$ (the degree is trivial if and only if its class in $W(k)/k^\times$ is the trivial class) and $(n-1)!$ divides the length $l$ of $R/(f_1,\ldots,f_n)$. However, the unimodular row $(f_1,\ldots,f_n)$ corresponds to the pair $(\mathrm{deg}(\varphi),l)$ in the fiber product by definition of $\mathrm{deg}(\varphi)$ and Theorem \ref{thm:Nori}.
\end{proof}

%%%%%%%%%%%%%%%%%%%%%%%%%%%%%%%%%%%%%%%%%%%%%%%%%%%%%%%%%%%%%%%%%%%%%%%%%%%%%%%%%%%%%%%%%%%%%%%%%%%%%%%%%%%%%%%%%

\subsection*{Compatibility with realization}
Assume $k = \cplx$.  If $(X,x)$ is a pointed smooth scheme, and $X(\cplx)$ is the associated topological space of complex points, the assignment $X \mapsto X(\cplx)$ can be extended to a functor
\[
{\Re}_{\cplx}: \hop{\cplx} \longrightarrow {\mathcal H}_{\bullet},
\]
where ${\mathcal H}_{\bullet}$ is the usual topological homotopy category (see \cite[p. 120-121]{MV} for more details).  In particular if $({\mathscr X},x)$ is a pointed space, there are induced homomorphisms
\[
\bigoplus_{i+j = n} \bpi_{i,j}^{\aone}(\mathscr{X})(\cplx) \longrightarrow \pi_{n}(\mathscr{X}(\cplx))
\]
by summing the various component homomorphisms.

Taking $\mathscr{X} = SL_n$, complex realization allows us to compare the computations of $\aone$-homotopy sheaves of Theorem \ref{thmintro:mainhomotopysheaf} with those coming from classical homotopy theory.  We will see that the above homomorphism is surjective in some situations.  The precise description of first non-stable $\aone$-homotopy sheaves of $SL_n$ was motivated by anticipation of results such as those established here.

\subsubsection*{Compatibility with complex realization}
Bott periodicity (see also \cite[Theorem 5]{Bott}) yields a computation of the homotopy groups of the unitary group in the stable range and the first non-stable homotopy group:
\[
\pi_i(U(n)) = \begin{cases} 0 & \text{ if } i < 2n, i \text{ even } \\
\Z & \text{ if } i < 2n, i \text{ odd, and } \\
\Z/n! & \text{ if } i = 2n. \end{cases}
\]
Furthermore, it is classically known that $\pi_{5}(U(2)) = \Z/2$ (use \cite{Whiteheadpinplus2} together with the fact that $U(2)$ is an $S^1$-bundle over $SU(2)$) and $\pi_6(U(2)) = \Z/12$ \cite[Proposition 19.4]{BorelSerre}.

\begin{thm}
\label{thm:complexrealization}
For any integer $n \geq 3$, the homomorphisms
\[
\begin{split}
\bpi_{n-1,n}^{\aone}(GL_n)(\cplx) &\longrightarrow \pi_{2n-1}(GL_n(\cplx)) \cong \pi_{2n-1}(U(n)) = \Z, \text{ and} \\
\bpi_{n-1,n+1}^{\aone}(GL_n)(\cplx) &\longrightarrow \pi_{2n}(GL_n(\cplx)) \cong \pi_{2n}(U(n)) = \Z/n!,
\end{split}
\]
induced by complex realization are isomorphisms.
\end{thm}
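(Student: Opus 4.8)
The plan is to identify the $\cplx$-points of both motivic homotopy sheaves explicitly, match them with the stated topological groups, and then check that the two realization maps are isomorphisms by reducing each to the compatibility of complex realization with a classical invariant: a stable class in K-theory for the first, and the $\A^1$-Brouwer degree on motivic spheres for the second.

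First I would compute the source groups. Applying Proposition \ref{prop:tatehomotopysheavesofgln} (in weights $j=n$ and $j=n+1$) and passing to sections over $\cplx$, I would use that $\cplx$ is algebraically closed, so $W(\cplx)=\Z/2$, ${\mathbf I}^i(\cplx)=0$ for $i\geq 1$, and $\K^M_i(\cplx)$ is divisible for $i\geq 1$; hence by Corollary \ref{cor:estimate} and Proposition \ref{prop:contractionsofMW} the contractions of the correction sheaves ${\mathbf S}_{n+1}$ and ${\mathbf T}_{n+1}$ vanish in weight $j=n$ and contribute exactly $\Z/n!$ in weight $j=n+1$, while the Quillen quotient contributes $\K^Q_0(\cplx)=\Z$ in weight $n$ and $\K^Q_{-1}=0$ in weight $n+1$. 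The outcome is $\bpi_{n-1,n}^{\aone}(GL_n)(\cplx)\cong\Z$ and $\bpi_{n-1,n+1}^{\aone}(GL_n)(\cplx)\cong\Z/n!$, matching $\pi_{2n-1}(U(n))=\Z$ and $\pi_{2n}(U(n))=\Z/n!$.

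For the first homomorphism (total weight $2n-1$, in the stable range) I would argue that vanishing of the correction sheaf over $\cplx$ makes the stabilization map $\bpi_{n-1,n}^{\aone}(GL_n)(\cplx)\to\bpi_{n-1,n}^{\aone}(GL_\infty)(\cplx)=\K^Q_0(\cplx)=\Z$ an isomorphism. By naturality of $\Re_{\cplx}$ along $GL_n\to GL_\infty$ together with the stable-range isomorphism $\pi_{2n-1}(U(n))\isomto\pi_{2n-1}(U)$, the claim reduces to showing the realization of the stable class $\K^Q_0(\cplx)=\Z\to\pi_{2n-1}(U)=\Z$ is an isomorphism. This is the compatibility of complex realization with algebraic and topological K-theory (the algebraic Bott element realizing to the topological one); concretely it can be verified by tracking Suslin's explicit generator $\phi_n\colon Q_{2n-1}\to GL$ of $\widetilde K_1(Q_{2n-1})=\Z$, whose realization $S^{2n-1}\to U$ generates $\pi_{2n-1}(U)$.

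For the second homomorphism (total weight $2n$, the first non-stable group) I would use the $\A^1$-fiber sequence $SL_n\to SL_{n+1}\to SL_{n+1}/SL_n\simeq\A^{n+1}\setminus 0$, whose complex realization is the classical fibration $SU(n)\to SU(n+1)\to S^{2n+1}$. Since $\Re_{\cplx}$ is monoidal with $\Re_{\cplx}(\gm)\simeq S^1$, it intertwines the weight-$(n+1)$ contraction with the $(n+1)$-fold topological loop, so the contracted long exact sequences realize compatibly. Over $\cplx$ the term $\bpi_{n-1,n+1}^{\aone}(SL_{n+1})(\cplx)=(\K^Q_n)_{-(n+1)}(\cplx)=\K^Q_{-1}=0$ vanishes, so the boundary $\bpi_{n,n+1}^{\aone}(\A^{n+1}\setminus 0)(\cplx)=\K^{MW}_0(\cplx)=GW(\cplx)=\Z\to\bpi_{n-1,n+1}^{\aone}(SL_n)(\cplx)=\Z/n!$ is surjective (noting $\bpi_{n-1,n+1}^{\aone}(SL_n)=\bpi_{n-1,n+1}^{\aone}(GL_n)$ as $n-1\geq 2$); the topological boundary $\pi_{2n+1}(S^{2n+1})=\Z\to\pi_{2n}(U(n))=\Z/n!$ is likewise surjective by Bott's computation. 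Naturality of $\Re_{\cplx}$ yields a commuting square of these boundary maps, and the left vertical map $GW(\cplx)=\Z\to\pi_{2n+1}(S^{2n+1})=\Z$ is the realization of self-maps of the motivic sphere: it sends the $\A^1$-degree to its rank, which over $\cplx$ equals the topological Brouwer degree, hence is an isomorphism (the identity maps to the identity). A diagram chase then shows the right vertical realization is surjective, and since both groups have order $n!$ it is an isomorphism.

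The main obstacle is the compatibility of complex realization with these fundamental invariants. For the second map this is relatively soft, reducing to the statement that the complex realization of the $\A^1$-Brouwer degree on $\A^{n+1}\setminus 0$ recovers the classical degree (over $\cplx$ just the rank of a symmetric bilinear form). For the first map it is more substantial, relying on the comparison of algebraic and topological K-theory under realization; making this precise—whether by citing the realization of the algebraic K-theory spectrum and the correspondence of Bott elements, or by an explicit Chern-character computation for Suslin's matrices—is the crux of the argument.
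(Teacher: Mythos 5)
Your computation of the two source groups and your treatment of the first homomorphism essentially coincide with the paper's own proof: the paper also reduces the weight-$n$ statement to the stable range via Proposition \ref{prop:tatehomotopysheavesofgln} and then matches the generator of $K_0^Q(\cplx)$ with that of $K^0_{top}(pt)$ using algebraic and topological Bott periodicity --- the same ``Bott element'' compatibility you identify as the crux. For the weight-$(n+1)$ statement, however, your route is genuinely different. The paper identifies $\bpi_{n-1,n+1}^{\aone}(GL_n)(\cplx)$ with the set of isomorphism classes of oriented rank-$n$ bundles on $Q_{2n+1}$ (Theorems \ref{thm:bundleseven} and \ref{thm:bundlesodd}, giving $\Z/n!$ since $W(\cplx) = \Z/2$), identifies $\pi_{2n}(U(n))$ with $[S^{2n+1},BSU(n)]$ by clutching, and then quotes the Swan--Towber computation of the clutching function of the topological bundle attached to a unimodular row, so that the isomorphism is verified on explicit generators. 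You instead push the realization functor through the fiber sequence $SL_n \to SL_{n+1} \to \A^{n+1}\setminus 0$, prove both boundary maps onto the two copies of $\Z/n!$ are surjective, check the degree/rank compatibility on $[\A^{n+1}\setminus 0,\A^{n+1}\setminus 0]_{\aone} = GW(\cplx)$, and finish with a diagram chase and a counting argument. This avoids both the bundle-classification theorems and the Swan--Towber citation, which is a real economy.

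The price is the one step you dismiss as ``naturality'': the claim that complex realization carries the motivic connecting homomorphism of this $\aone$-fiber sequence to the topological connecting homomorphism of $SU(n)\to SU(n+1)\to S^{2n+1}$. This is not formal. Realization is a left Quillen functor: it preserves homotopy colimits (hence the suspension/smash identifications you use), but not homotopy fiber sequences, so there is no general compatibility statement for boundary maps; the maps $\bpi_{i,j}^{\aone}(\cdot)(\cplx) \to \pi_{i+j}(\cdot(\cplx))$ are only tautologically compatible with morphisms induced by composing with maps of spaces. To close the gap you need (a) that the realization of this particular sequence is again a fibration sequence --- true, because $SL_n(\cplx)\to SL_{n+1}(\cplx)\to (\A^{n+1}\setminus 0)(\cplx)$ is a locally trivial bundle --- and (b) that the motivic connecting map ${\mathbf R}\Omega^1_s(\A^{n+1}\setminus 0)\to SL_n$ realizes, through the canonical comparison ${\Re}_{\cplx}({\mathbf R}\Omega^1_s \mathscr{X})\to \Omega\, {\Re}_{\cplx}(\mathscr{X})$, to the topological connecting map; that verification is where the actual content lies. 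A cleaner patch: replace the boundary map by composition with the space-level map $\A^{n+1}\setminus 0 \to BSL_n$ classifying the stably free bundle of Lemma \ref{lem:universaltorsor} (i.e., use the sequence $\A^{n+1}\setminus 0 \to BSL_n \to BSL_{n+1}$, exactly as the paper does in Section \ref{s:unimodular}). Then the square whose commutativity you need is induced by an honest morphism of spaces, hence commutes by functoriality of ${\Re}_{\cplx}$, and your surjectivity, degree, and counting arguments go through verbatim.
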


\begin{proof}
For the first isomorphism, we proceed as follows.  The stabilization morphism
\[
\bpi_{n-1,n}^{\aone}(GL_n)(\cplx) \longrightarrow \bpi_{n-1,n}^{\aone}(GL_{n+1})(\cplx)
\]
is an isomorphism by Proposition \ref{prop:tatehomotopysheavesofgln}, and the latter group is isomorphic to the stable group $\bpi_{n-1,n}^{\aone}(GL)(\cplx)$.  In particular, as we observed before, both groups in question are isomorphic to $\Z$.  Since complex realization is functorial, we have a commutative diagram of the form
\[
\xymatrix{
\bpi_{n-1,n}^{\aone}(GL_n)(\cplx) \ar[r]\ar[d] & \bpi_{n-1,n}^{\aone}(GL)(\cplx) \ar[d] \\
\pi_{2n-1}(U(n)) \ar[r]& \pi_{2n-1}(U(\infty)).
}
\]
The bottom horizontal arrow is an isomorphism.

The right vertical morphism is also an isomorphism.  Indeed, we have a canonical isomorphism ${\mathbf R}\Omega^1_s(\Z \times BGL) \cong GL$, and adjunction therefore gives an identification
\[
\bpi_{n-1,n}^{\aone}(GL) \cong \bpi_{n,n}^{\aone}(\Z \times BGL_{\infty}).
\]
By Bott periodicity in the algebraic setting, i.e., the computation of \cite[\S 4, Theorem 3.13]{MV}, we observe that
\[
[{\pone}^{\wedge n},\Z \times BGL_{\infty}]_{\aone} \cong [S^0_\cplx,\Z \times BGL_{\infty}]_{\aone}.
\]
The latter group is isomorphic to $K_0^Q(\cplx)$, which is generated by the class of a $1$-dimensional $\cplx$-vector space.  Analogously, one identifies $\pi_{2n-1}(U(n)) \isomt \pi_{2n-1}(U(\infty)) \isomt \pi_{2n}(\Z \times BU(\infty))$.  Again, by Bott periodicity, the latter can be identified with the topological K-theory $K^0_{top}(pt)$, again generated by a $1$-dimensional complex vector space.  Keeping track of the various identifications, we see that a generator of $\bpi_{n-1,n}^{\aone}(GL_n)(\cplx)$ is mapped to a generator of $\pi_{2n-1}(U(n))$.

For the second isomorphism, we proceed as follows.  For any integer $n \geq 3$, we first identify $\bpi_{n-1,n+1}^{\aone}(GL_n) \cong \bpi_{n-1,n+1}^{\aone}(SL_n) \cong \bpi_{n,n+1}^{\aone}(BSL_n)$.  Because $\bpi_1^{\aone}(BSL_n)$ is trivial, the latter set can be canonically identified with the set of free $\aone$-homotopy classes of maps $[Q_{2n+1},BSL_n]_{\aone}$. Similarly, we identify $\pi_{2n}(GL_n(\cplx)) = \pi_{2n}(SU(n))$ with $[S^{2n+1},BSU(n)]$ by means of the clutching construction.

Since $W(\cplx) = \Z/2$, Theorems \ref{thm:bundleseven} and \ref{thm:bundlesodd} tell us that the set of isomorphism classes of oriented rank $n$ bundles on $Q_{2n+1}$ has a natural group structure and is isomorphic to $\Z/n!\Z$ (irrespective of whether $n$ is even or odd).  Now, the map that sends a complex algebraic vector bundle to the underlying topological vector bundle defines a function
\[
[Q_{2n+1},BSL_{2n}]_{\aone} \longrightarrow [S^{2n+1},BSU(n)].
\]
As mentioned above, topological vector bundles can be described by means of the clutching construction.  Now, each of the vector bundles of rank $n$ on $Q_{2n+1}$ is given by a unimodular row.  The homotopy class of the clutching function attached to the unimodular row is computed, e.g., in \cite[Theorem 3.1]{SwanTowber} and this gives the required isomorphism.
\end{proof}

\begin{rem}
Consider the homomorphism $\bpi_{n-1-i,j}^{\aone}(SL_n) \to \pi_{n-1-i+j}(SU(n))$.  If $i > 0$, then $\bpi_{n-1-i}^{\aone}(SL_n)$ is in the stable range and therefore isomorphic to $\K^Q_{n}$.  It follows that if $i > 0$ and $j > n$, then $\bpi_{n-1-i,j}^{\aone}(SL_n)$ is trivial, so the homomorphism from the first line is trivial.  If $i < 0$, we do not know what happens.
\end{rem}

\subsubsection*{Comments on real realization}
The map sending a $\real$-scheme $X$ the topological space $X(\real)$ with its usual topology can be extended to a functor $\ho{\real} \to {\mathcal H}$ that we refer to as ``real realization" (this is easier than the discussion of \cite[p. 121-122]{MV}, and analogous to the discussion of ``complex realization" presented there).

%also show that sending a smooth $k$-scheme to $X(\cplx)$ equipped with the $\Z/2$-action by complex conjugation can be extended to a ``real realization" functor from $\ho{\real}$ to the $\Z/2$-equivariant homotopy category.  Since $X(\real)$ is homeomorphic to the fixed points of the $\Z/2\Z$-action on $X(\cplx)$, and a $\Z/2$-equivariant weak equivalence is a weak equivalence on fixed point loci, it follows that sending $X$ to $X(\real)$ can be extended to a functor from $\ho{\real}$ to the homotopy category.  In the sequel, ``real realization" will refer to this naive construction.

There is a homotopy equivalence $GL_n(\real) \cong O(n)$.  The group $O(2)$ is an extension of $\Z/2$ by $SO(2)$, and so the homotopy groups of $SO(2)$ are trivial in degree $> 1$.  The groups $\pi_{n-1}(O(n))$ for $n \geq 3$ are determined by Bott periodicity.  For completeness, we quote the result from \cite{Kervaire}: the group $\pi_{r-1}(O(r))$ is equal to $0, \Z \oplus \Z, \Z/2,\Z,0$ if $r = 3,4,5,6$ or $7$ and, more generally, $\Z \oplus \Z, \Z/2 \oplus \Z/2, \Z \oplus \Z/2, \Z/2, \Z \oplus \Z, \Z/2, \Z, \Z/2$ if $r \geq 8$, and $r \equiv 0,1,2,3,4,5,6$ or $7$ modulo $8$.  The situation involving compatibility with real realization is more subtle than that of complex realization.

Real realization gives rise to canonical homomorphisms
\[
\bpi_{i,j}^{\aone}(GL_n)(\real) \longrightarrow \pi_i(GL_n(R)) \cong \pi_i(O(n));
\]
in particular, $\aone$-homotopy groups of several different weights map to the {\em same} topological homotopy group.  If $n \geq 2$ and $i \geq 2$, we can again use fiber sequences to study $SL_n$ and $SO(n)$ instead of $GL_n$ and $O(n)$.  In that situation, the isomorphisms in question are compatible with the clutching construction (as above).

Similar to the situation involving complex realization, real realization is compatible with (simplicial) suspension, so the homomorphism above can also be identified as a morphism
\[
\bpi_{i+1,j}^{\aone}(BGL_n))(\real) \longrightarrow \pi_{i+1}(BO(n)).
\]
The computations of homotopy groups of $O(n)$ give rise to descriptions of the set of isomorphism classes of rank $n$ topological vector bundles on $S^{n}$.  Likewise, Theorems \ref{thm:bundlesodd} and \ref{thm:bundleseven} give descriptions of the sets of isomorphism classes of real rank $n$ vector bundles on $Q_{2n+1}$ (which has real realization homotopy equivalent to $S^n$): these groups are equal to $\Z/(n-1)!\Z$ if $n$ is odd and $\Z/(n-1)! \times_{\Z/2} W(k)$ if $n$ is even (the indices have shifted).

While neither realization map is (individually) surjective or injective, it is possible that the map $\bigoplus_{j}\bpi_{n,j}^{\aone}(BGL_n) \to \pi_{n}(BO(n))$ is surjective.  Nevertheless, the factor of $\Z$ that corresponds to $W(\real)$ in Theorem \ref{thm:bundlesodd} does admit an elementary explanation; we view the following remark as an explanation of the factors of ${\mathbf I}^n$ that appear in Theorem \ref{thm:homotopysheafevencase}.

\begin{rem}
A rank $i$ vector bundle on $S^n$ is classified by a map $S^n \to BSO(i)$.  The obvious inclusion $SO(i) \hookrightarrow SO(i+1)$ induces a map $BSO(i) \to BSO(i+1)$.  Those maps $S^n \to BSO(i)$ such that the composed maps $S^n \to BSO(i+1)$ are homotopically trivial (i.e., those rank $i$ vector vector bundles that become trivial upon direct sum with a trivial line bundle) lift to a map $\tilde{f}: S^n \to SO(i+1)/SO(i) \cong S^i$.  Taking $i = n$, the homotopy class of the map $\tilde{f}$ is completely determined by its topological degree.

Now, given a rank $n-1$ vector bundle on $Q_{2n-1}$ corresponding to a unimodular row, the classifying map $Q_{2n-1} \to BSL_{n-1}$ lifts to a map $Q_{2n-1} \to Q_{2n-1}$.  Morel has associated with such a map a degree in $GW(k)$, and there is an associated degree in $W(k)$; as observed in the proof of Theorem \ref{thm:Nori+}, this degree can be identified with the degree of \cite{Fasel11c}.  Taking $k = \real$, one observes that the real points of a map $Q_{2n-1} \to BSL_{n-1}$ correspond to a rank $n-1$ vector bundle on $S^{n-1}$ and the element of $W(\real)$ constructed above is precisely the topological degree of this map.
\end{rem}

\begin{footnotesize}
\bibliographystyle{alpha}
\bibliography{vectorbundlesonalgebraicspheres}
\end{footnotesize}
\end{document}